\documentclass[11pt,a4paper]{amsart}
\usepackage[utf8]{inputenc}
\usepackage{amssymb}
\usepackage{amsmath}
\usepackage{amsthm}
\usepackage{mathtools}
\usepackage{stmaryrd}
\usepackage{xcolor}
\usepackage{hyperref}
    \hypersetup{colorlinks=true,allcolors=red}
    \usepackage[capitalize,nameinlink,noabbrev]{cleveref}
\usepackage{tikz-cd}
\usepackage[top=3 cm, bottom=3 cm, left=3 cm, right=3cm]{geometry}
\usepackage{epigraph}
\usepackage[backref=true,giveninits=true,doi=false,url=false,isbn=false,backend=biber]{biblatex}
    \newbibmacro{string+doiurlisbn}[1]{%
      \iffieldundef{doi}{%
        \iffieldundef{url}{%
          \iffieldundef{isbn}{%
            \iffieldundef{issn}{%
              #1%
            }{%
              \href{http://books.google.com/books?vid=ISSN\thefield{issn}}{#1}%
            }%
          }{%
            \href{http://books.google.com/books?vid=ISBN\thefield{isbn}}{#1}%
          }%
        }{%
          \href{\thefield{url}}{#1}%
        }%
      }{%
        \href{http://dx.doi.org/\thefield{doi}}{#1}%
      }%
    }
    \DeclareFieldFormat{title}{\usebibmacro{string+doiurlisbn}{\mkbibemph{#1}}}
    \DeclareFieldFormat[article,incollection,thesis,misc,inproceedings,online,inbook]{title}{\usebibmacro{string+doiurlisbn}{\mkbibquote{#1}}}
    \DeclareFieldFormat{year}{\usebibmacro{year-parenthesis}{\mkbibemph{#1}}}
\renewbibmacro*{date}{%
\iffieldundef{year}
{}
{\ifentrytype{misc}{\printtext[parens]{\printdate}}{\printdate}}}%
\hypersetup{
           breaklinks=true,   
           colorlinks=true,   
           pdfusetitle=true,  
        }

\makeatletter
\let\@wraptoccontribs\wraptoccontribs
\makeatother

\newcounter{dummy}
\numberwithin{dummy}{section}
\newtheorem{thm}[dummy]{Theorem}
\newtheorem{defn}[dummy]{Definition}
\newtheorem{conj}[dummy]{Conjecture}
\newtheorem{lem}[dummy]{Lemma}
\newtheorem{prop}[dummy]{Proposition}
\newtheorem{cor}[dummy]{Corollary}
\newtheorem{ex}[dummy]{Example}
\theoremstyle{definition}
\newtheorem{rmk}[dummy]{Remark}

\numberwithin{equation}{section}

\DeclareMathOperator{\codim}{codim}
\DeclareMathOperator{\End}{End}
\DeclareMathOperator{\GL}{GL}

\DeclareMathOperator{\id}{id}
\DeclareMathOperator{\Mat}{Mat}
\DeclareMathOperator{\mult}{mult}

\DeclareMathOperator{\ord}{ord}

\DeclareMathOperator{\red}{red}
\DeclareMathOperator{\Spec}{Spec}

\title{Arithmetic unlikely intersections in powers of the multiplicative group}

\date{\today}

\author[F. Campagna]{Francesco Campagna}
\address[F. Campagna]{Université Clermont Auvergne - LMBP UMR 6620 - CNRS, Campus des Cézeaux 3, place Vasarely 63178 Aubière cedex, France}
\email{francesco.campagna@uca.fr}

\author[G. A. Dill]{Gabriel A. Dill}
\address[G. A. Dill]{Institut de Math\'ematiques, Universit\'e de Neuch\^atel, Rue Emile-Argand 11, 2000 Neuch\^atel, Switzerland}
\email{gabriel.dill@unine.ch}

\address[R. Wilms]{Universit\'e de Caen Normandie, CNRS, LMNO UMR 6139, F-14000 Caen, France}
\email{robert.wilms87@gmail.com}

\contrib[with an appendix joint with]{Robert Wilms}

\keywords{Arithmetic B\'ezout theorem, arithmetic intersection theory, divisibility sequence, greatest common divisor, multiplicative dependence.}
\subjclass{11G50, 14G40.}

\begin{document}

\maketitle
\begin{abstract}
Inspired by work of Bugeaud-Corvaja-Zannier, we formulate a conjecture about unlikely intersections in powers of the multiplicative group over the ring of integers in a number field. Broadly speaking, if an intersection with a subgroup scheme is unlikely for dimension reasons, its ``size" should not be too big compared to the ``complexity" of the subgroup scheme. We first obtain some results on likely intersections that serve as a benchmark for the unlikely case and generalize work of Barroero-Capuano-M\'erai-Ostafe-Sha. We then show that our conjecture in dimension $1$ follows from work of Corvaja-Zannier, we obtain some partial result in dimension $2$, and we present some open problems that are special cases of the conjecture.
\end{abstract}

\vspace{0.3cm}

\epigraph{Ich setze im Ich dem teilbaren Ich ein teilbares
Nicht-Ich entgegen.}{J. G. Fichte, Grundlage der gesammten Wissenschaftslehre}

\tableofcontents

\section{Introduction}

For $N \in \mathbb{N}=\{1,2,...\}$, let $a_N$ be the cardinality of the finite ring 
\[
A_N:= \mathbb{Z}[T]/(T^N-1, (T+1)^N-1, (T-1)^N-1).
\]
For $N\mid M$, there is a natural projection map $A_M \twoheadrightarrow A_N$, so the sequence $\{a_N\}_{N \in \mathbb{N}}$ is a divisibility sequence whose first terms are
\[
a_1=1, \hspace{0.2cm} a_2=1, \hspace{0.2cm} a_3=4, \hspace{0.2cm} a_4=25=5^2, \hspace{0.2cm} a_5=11, \hspace{0.2cm} a_6=153664=2^6\cdot 7^4.
\]

At first sight, the sequence $\{a_N\}_{N\in\mathbb N}$ appears rather mysterious. Its first values do not seem to exhibit any obvious pattern, and, to the best of our knowledge, this sequence has not been studied before. Nevertheless, we shall argue that one should expect a strong asymptotic constraint on its growth, namely
\begin{equation} \label{eq:lim_to_0}
    \lim_{N\to\infty} \frac{\log a_N}{N^2}=0.
\end{equation}
The point of this article is that this prediction does not arise from a direct analysis of the rings $A_N$, but rather from a more conceptual geometric framework. Indeed, the rings $A_N$ admit a natural interpretation as arithmetic intersection rings, and the above asymptotic behaviour is predicted by a general philosophy of \textit{arithmetic unlikely intersections} that we shall develop here.

In characteristic 0, the theory of unlikely intersections, \emph{i.e.} of intersections that are unexpected for dimension reasons, has a rich history, beginning with the pioneering work of Bombieri, Masser, and Zannier \cite{BMZ99}. The central conjectures in the field have been formulated by Zilber \cite{Zilber}, Pink \cite{PinkUnpubl}, and Bombieri, Masser, and Zannier \cite{BMZ07}. For a comprehensive account, we refer to the books of Zannier \cite{ZannierBook} and of Pila \cite{Pila_book}.

On the other hand, many statements about unlikely intersections in characteristic $0$ become hopelessly false in mixed or positive characteristic, already in relatively simple cases like that of the Manin--Mumford conjecture (every point defined over a finite field is torsion).

In positive characteristic, one usually considers varieties defined over function fields and excludes varieties that are defined over finite fields, see \emph{e.g.} \cite{Pink_Roessler_2004,  Masser_2014,Brownawell_Masser_2017}. For Manin-Mumford and Andr\'e-Oort-like statements over finite fields as well as in mixed characteristic, see \cite{Richard_2018,Edixhoven_Richard_2020,Baldi_Richard_Ullmo_2021}.

In this article, we study unlikely intersections in mixed characteristic from a different angle. Our approach is inspired by the following theorem by Bugeaud, Corvaja, and Zannier \cite[Theorem 1]{Bugeaud_Corvaja_Zannier_2003}, which we use to illustrate the general framework in which we will work: let $a,b \in \mathbb{N}$ be multiplicatively independent integers, and let $\varepsilon>0$. Then, provided $N \in \mathbb{N}$ is sufficiently large, we have
\begin{equation} \label{eq:BCZ_theorem}
    \log \gcd(a^N-1,b^N-1) < \varepsilon N.
\end{equation}

We do not know whether this upper bound is best possible. It is known that, for infinitely many $N \in \mathbb{N}$, the left-hand side of \eqref{eq:BCZ_theorem} is bounded from below by $N^{c/\log \log N}$ for some $c > 0$; this is mentioned in \cite{Bugeaud_Corvaja_Zannier_2003} to follow from work of Adleman, Pomerance, and Rumely \cite[Proposition 10]{Adleman_Pomerance_Rumely_1983}. For more results about g.c.d. sequences, see for instance \cite{AilonRudnick, Silverman_2004, Silverman_2005, Silverman_2017, Levin_2019, Barroero_Capuano_Turchet}.

As already suggested by Zannier in \cite[Chapter 2]{ZannierBook}, the inequality \eqref{eq:BCZ_theorem} admits a natural interpretation in terms of arithmetic unlikely intersections inside $\mathbb{G}_{m,\mathbb{Z}}^2$, the square of the arithmetic multiplicative group over $\mathbb{Z}$. Indeed, let $\mathcal{V}\subseteq \mathbb{G}_{m,\mathbb{Z}}^2$ be the Zariski closure of the rational point $(a,b) \in \mathbb{G}_{m,\mathbb{Q}}^2(\mathbb{Q})$. Then $\mathcal{V}$ is an arithmetic scheme of (absolute) dimension 1 and the condition on the multiplicative independence of $a$ and $b$ is equivalent to the statement that the generic fiber of $\mathcal{V}$ is not contained in any proper algebraic subgroup of $\mathbb{G}_{m,\mathbb{Q}}^2$. This in particular implies that for all $N \in \mathbb{N}$, the scheme-theoretic intersection of $\mathcal{V}$ with the kernel $\mathcal{H}_N = \ker [N]$ of the $N$-th power map on $\mathbb{G}_{m,\mathbb{Z}}^2$ is zero-dimensional. Note that $\dim \mathcal{H}_N=1$ so that $\dim \mathcal{V} + \dim \mathcal{H}_N < 3 = \dim \mathbb{G}_{m,\mathbb{Z}}^2$ and the intersection is unlikely. A computation shows that $\mathcal{V} \cap \mathcal{H}_N=\Spec R_N$, where
\[
R_N=\mathbb{Z}/\gcd(a^N-1,b^N-1)\mathbb{Z},
\]
so the left-hand side of \eqref{eq:BCZ_theorem} is equal to $\log |R_N|$. This quantity may be regarded as a measure of the ``arithmetic size" of the intersection scheme $\mathcal{V} \cap \mathcal{H}_N$. On the other hand, the factor $N$ on the right-hand side can be interpreted as the ``complexity" of the subgroup scheme $\mathcal{H}_N$. Thus, at an informal level, inequality \eqref{eq:BCZ_theorem} asserts that the arithmetic size of the unlikely intersection $\mathcal{V}\cap \mathcal{H}_N$ is ``small" compared to the complexity of $\mathcal{H}_N$.

The aim of this article is to extend this picture to a broader setting, namely to certain arithmetic unlikely intersections in $\mathbb{G}^n_{m,\mathcal{O}_K}$, where $\mathcal{O}_K$ is the ring of integers of a fixed number field $K$ and $n\in \mathbb{N}$. We formulate a conjecture on arithmetic \textit{likely intersections} in this context (Conjecture \ref{conj:LIB}) as well as a companion conjecture on arithmetic \textit{unlikely intersections} (Conjecture \ref{conj:mainconj}). We present several results supporting these conjectures, some of a general nature and others arising from specific families of examples. The rest of this introduction contains a more detailed overview of the contents of our paper that we have now just briefly described.

After fixing notation and recalling some preliminaries in Section \ref{sec:preliminaries}, we begin in Section \ref{sec:complexity} by defining the complexity $\mathcal{C}(\mathcal{H})$ of certain closed subgroup schemes $\mathcal{H} \subseteq \mathbb{G}^n_{m,\mathcal{O}_K}$, see Definition \ref{defn:complexity}. The closed subgroup schemes $\mathcal{H}$ for which we define the complexity are the kernels of finite sets of endomorphisms of $\mathbb{G}^n_{m,\mathcal{O}_K}$. These are precisely the closed subgroup schemes of $\mathbb{G}^n_{m,\mathcal{O}_K}$ that are flat over $\Spec \mathcal{O}_K$ as we show in Proposition \ref{prop:complexityflat}. With our definition, the kernel of the $N$-th power map on $\mathbb{G}^n_{m,\mathcal{O}_K}$ has complexity $N$ for all $N \in \mathbb{N}$, see Lemma \ref{lem:complexitymultbyN}.

In Section \ref{sec:trivialbound}, we apply the arithmetic intersection theory of Bost, Gillet, and Soul\'e \cite{BGS} to prove the following result about (possibly) likely intersections in $\mathbb{G}^n_{m,\mathcal{O}_K}$:

\begin{thm}[Theorem \ref{thm:LIB}]\label{thm:LIBintro}
Let $\mathcal{V} \subseteq \mathbb{G}^n_{m,\mathcal{O}_K}$ be an integral closed subscheme that dominates $\Spec \mathcal{O}_K$. There exists a constant $C = C(n,K,\mathcal{V}) \in \mathbb{R}$ such that for all flat subgroup schemes $\mathcal{H} \subseteq \mathbb{G}^n_{m,\mathcal{O}_K}$ which satisfy $\dim \mathcal{H} + \dim \mathcal{V} \leq n+1$, we have
\begin{equation}
\sum_{\mathcal{Z}}{\deg(\mathcal{Z}_{\red})\log N(\mathcal{Z})} \leq C\mathcal{C}(\mathcal{H})^{\dim \mathcal{V}},
\end{equation}
where the sum on the left-hand side runs over the irreducible components of $\mathcal{V} \cap \mathcal{H}$ which do not dominate $\Spec \mathcal{O}_K$, and for each such component $\mathcal{Z}$ we denote by $N(\mathcal{Z})$ the norm of the maximal ideal $\mathfrak{P}\subseteq \mathcal{O}_K$ such that $\mathcal{Z} \subseteq \mathbb{G}^n_{m,\mathbb{F}_{\mathfrak{P}}}$, and by $\deg(\mathcal{Z}_{\red})$ the degree of the Zariski closure of $\mathcal{Z}_{\red}$ in $\mathbb{P}^n_{\mathbb{F}_{\mathfrak{P}}}$.
\end{thm}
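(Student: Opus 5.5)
We bound the left-hand side by an arithmetic intersection number on a projective compactification. We then apply the arithmetic Bézout inequality of Bost--Gillet--Soulé, in which the complexity $\mathcal{C}(\mathcal{H})$ appears through the degrees and heights of the equations cutting out $\mathcal{H}$.

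First, by the definition of complexity (Definition \ref{defn:complexity}) and Proposition \ref{prop:complexityflat}, $\mathcal{H}$ is the kernel of finitely many endomorphisms. So $\mathcal{H}$ is cut out in $\mathbb{G}^n_{m,\mathcal{O}_K}$ by binomial equations $x^{a}-x^{b}=0$ with exponent vectors of total degree $\ll \mathcal{C}(\mathcal{H})$. Write $d=\dim \mathcal{V}$, so $\dim \mathcal{V}_K = d-1$. The components $\mathcal{Z}$ that do not dominate $\Spec\mathcal{O}_K$ are contained in fibres and have absolute dimension $\le d+\dim\mathcal{H}-(n+1)\le 0$ relative to the expected count. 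I would first reduce to the situation where only $d-1$ "directions" of cutting are needed. Since $\mathcal{H}$ is flat of some relative dimension $r$ with $r+d\le n+1$, it is locally a complete intersection of $n-r\ge d-1$ binomials in the torus. Then I choose $d$ suitable equations from a basis of the relation lattice (after a unimodular change of coordinates, $\mathcal{H}$ becomes a product of a subtorus and finite kernels, with equations of degree $\ll\mathcal{C}(\mathcal{H})$ and height $0$). Intersecting $\overline{\mathcal{V}}\subseteq\mathbb{P}^n_{\mathcal{O}_K}$ successively with these $d$ hypersurfaces $\overline{\operatorname{div}}(f_i)$ yields a cycle of dimension $0$ containing every non-dominant component $\mathcal{Z}$ of $\mathcal{V}\cap\mathcal{H}$ that is isolated in the resulting intersection. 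Its arithmetic degree is $\sum \mathrm{mult}\cdot\log|\text{residue field}|$, which dominates $\deg(\mathcal{Z}_{\red})\log N(\mathcal{Z})$ for a vertical component of positive dimension: intersecting such a $\mathcal{Z}$ further is handled by the geometric Bézout degree in the special fibre.

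The key step is the arithmetic Bézout theorem (\cite{BGS}, Theorem 5.5.1 or its variant for successive intersections with hypersurfaces). For $\overline{\mathcal{V}}$ and a hypersurface given by $f$ of degree $D$ and height $h(f)$, it gives
\[ h(\overline{\mathcal{V}}\cdot \operatorname{div} f)\le D\,h(\overline{\mathcal{V}}) + h(f)\deg(\mathcal{V}_K) + c_n D\deg(\mathcal{V}_K). \]
Here $h(f)=O(1)$ since the binomials have coefficients $\pm1$ (sup norm or $L^2$ norm bounded in terms of the number of monomials). Iterating $d$ times gives a bound $\ll D_1\cdots D_d\,(h(\overline{\mathcal{V}})+\deg\mathcal{V}_K)\ll C\,\mathcal{C}(\mathcal{H})^{d}$. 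The final arithmetic dimension-zero part has height equal to $\sum\log\#$ of the finite part, plus archimedean contributions. I would drop the horizontal components by positivity of heights of the Fubini--Study metric (heights of effective cycles are $\ge 0$ up to a constant times the degree, which is also $\ll\mathcal{C}(\mathcal{H})^{d-1}$).

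The main obstacle is handling improper intersections: components of $\overline{\mathcal{V}}\cap\operatorname{div}f_1\cap\dots$ of excess dimension, both horizontal ones (if $\mathcal{V}_K\cap\mathcal{H}_K$ has unexpected dimension) and boundary components in $\mathbb{P}^n\setminus\mathbb{G}_m^n$. I would treat these as in the refined Bézout/intersection argument. At each step I choose $f_i$ generically inside the ideal of $\mathcal{H}$ as a linear combination, with small integer coefficients, of the degree-$\le D$ binomials vanishing on $\mathcal{H}$, multiplied by monomials. This is possible since $\mathcal{H}$ is defined set-theoretically by such forms, and the genericity costs only $O(\log D)$ in height, which is harmless. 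The sequence is arranged so that each new divisor meets the current cycle properly away from $\mathcal{H}$, using prime avoidance over the infinite field $K$ and the finitely many relevant special fibres. The components $\mathcal{Z}$ then appear with positive multiplicity in the final cycle, or are contained in positive-dimensional components whose arithmetic contribution (via a fibre degree times $\log N(\mathfrak{P})$) still dominates $\deg(\mathcal{Z}_{\red})\log N(\mathcal{Z})$. Checking this domination and the nonnegativity of the discarded terms is where I expect the real work to lie.
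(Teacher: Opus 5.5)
\textbf{There is a genuine gap, located in the step you defer as ``where the real work lies''.} Your framework is the right one: arithmetic B\'ezout against homogenized binomials of degree $\ll\mathcal{C}(\mathcal{H})$ and height $0$, iterated $\dim\mathcal{V}$ times, with unwanted terms discarded by nonnegativity of heights. But the device you propose for the improper intersections cannot work.

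Intersecting the current cycle with a fixed $\operatorname{div} f_i$ requires that $f_i$ vanish identically on no component of the cycle. This fails in three ways:
\begin{enumerate}
\item Components contained in $\mathcal{H}$ are exactly the excess components of $\mathcal{V}\cap\mathcal{H}$, including the positive-dimensional vertical ones you must bound. They lie in $\operatorname{div} f$ for every $f$ in the ideal of $\mathcal{H}$, so no generic choice makes the next intersection proper there. Meeting the cycle ``properly away from $\mathcal{H}$'' is not enough to define the intersection product or to apply B\'ezout. For the same reason, such $\mathcal{Z}$ cannot be cut further by the $f_i$ and never appear in your final $0$-cycle.
\item Components at infinity lying in the base locus of your linear system of homogenized binomials cannot be cut properly either.
\item The components that matter mostly live over finite residue fields $\mathbb{F}_{\mathfrak{P}}$, where prime avoidance is unavailable: a vector space over $\mathbb{F}_q$ of dimension at least $2$ is a union of $q+1$ proper subspaces. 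Meanwhile the number of components to avoid grows like a power of $\mathcal{C}(\mathcal{H})$. So a single small-height combination cutting all of them properly need not exist, and enlarging $K$ by unbounded degree would spoil the constants.
\end{enumerate}

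\textbf{The missing idea is to work component by component, with sets of integral subschemes rather than one cycle and global divisors.} For an integral $\mathcal{W}\not\subseteq\mathcal{H}$, the definition of $\mathcal{C}(\mathcal{H})$ directly gives one binomial $\prod x_i^{a_i}-1$ with $|a_i|\le\mathcal{C}(\mathcal{H})$ that vanishes on $\mathcal{H}$ but not on $\mathcal{W}$. Its homogenization $Q$ has these properties:
\begin{itemize}
\item it has degree $\le n\mathcal{C}(\mathcal{H})$ and height $0$;
\item it has only horizontal components, of controlled height;
\item it meets $\overline{\mathcal{W}}$ properly.
\end{itemize}
B\'ezout then bounds $\sum(h(\mathcal{Z})+\deg\mathcal{Z})$ over the components of $\mathcal{W}\cap\{\prod x_i^{a_i}=1\}$ by $C'(h(\mathcal{W})+\deg\mathcal{W})\mathcal{C}(\mathcal{H})$; this is Proposition \ref{prop:inductivestep}.

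One then inducts on $k\le\dim\mathcal{V}$:
\begin{itemize}
\item a component already contained in $\mathcal{H}$ is frozen, since it is then a component of $\mathcal{V}\cap\mathcal{H}$ or can be discarded;
\item components at infinity are dropped by nonnegativity of heights;
\item every other component receives its own binomial.
\end{itemize}
The dimension drops at each step, so you obtain $\sum h(\mathcal{Z}_{\red})\le C\,\mathcal{C}(\mathcal{H})^{\dim\mathcal{V}}$. For vertical $\mathcal{Z}$ one has $h(\mathcal{Z}_{\red})=\deg(\mathcal{Z}_{\red})\log N(\mathcal{Z})$, which gives the theorem.

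This route needs no genericity, lattice bases, or refined intersection theory. It does not even use $\dim\mathcal{H}+\dim\mathcal{V}\le n+1$. In any case, with $\dim\mathcal{H}=r+1$ that hypothesis gives $n-r\ge d$, not $d-1$.

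Also drop the unimodular change of coordinates. It is not a linear automorphism of $\mathbb{P}^n$, and it changes $\deg\overline{\mathcal{V}}$ and $h(\overline{\mathcal{V}})$ by amounts depending on $\mathcal{H}$.
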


We exclude the case where $\dim \mathcal{H} + \dim \mathcal{V} > n+1$ since in that case, we actually expect the generic fiber of the intersection to be non-empty and so the problem's arithmetic nature retreats somewhat to the background.

Theorem \ref{thm:LIBintro} can be seen as a likely intersection bound \textit{without multiplicities}. For instance, if $n=1$, $\mathcal{V}$ is the Zariski closure in $\mathbb{G}_{m,\mathbb{Z}}$ of some point $a \in \mathbb{G}_{m,\mathbb{Q}}(\mathbb{Q}) \backslash \{1,-1\}$, and $\mathcal{H}=\mathcal{H}_N = \ker [N]$ is taken as in the theorem of Bugeaud, Corvaja and Zannier mentioned above, our Theorem \ref{thm:LIBintro} implies a bound of the form
\[
\log \mathrm{rad} (a^N-1) \leq C N,
\]
where $\mathrm{rad}(\cdot)$ denotes the radical of an integer and $C>0$. Establishing such a bound certainly does not require our theorem, even without taking the radical. The strength of Theorem \ref{thm:LIBintro} resides rather in its generality as well as in the powerful methods underlying its proof. Indeed, these methods allow us to improve on a result of Barroero, Capuano, M\'erai, Ostafe, and Sha \cite[Theorem 2.1]{BCMOS_Preprint} on multiplicative dependence with two independent relations modulo primes.

\begin{thm}[Theorem \ref{cor:curveingm}]\label{cor:curveingmINTRO}
Let $\mathcal{V} \subseteq \mathbb{G}^{n}_{m,\mathcal{O}_K}$ be a $2$-dimensional integral closed subscheme that dominates $\Spec \mathcal{O}_K$. Suppose that the generic fiber of $\mathcal{V}$ is not contained in any proper algebraic subgroup of $\mathbb{G}^{n}_{m,K}$.

There exist a finite set $S = S(\mathcal{V})$ of $1$-dimensional integral closed subschemes $\mathcal{X} \subseteq \mathcal{V}$, flat over $\Spec \mathcal{O}_K$, and a constant $C = C(K,n,\mathcal{V})$ such that the following holds: let $(k_1,\hdots,k_n)$ and $(l_1,\hdots,l_n)$ be two linearly independent integer vectors, set $M = (\max_{i}{|k_i|})(\max_{i}{|l_i|})$, and let $\mathcal{H} \subseteq \mathbb{G}^n_{m,\mathcal{O}_K}$ be the subgroup scheme defined by the equations $\prod_{i=1}^{n}{X_i^{k_i}} = \prod_{i=1}^{n}{X_i^{l_i}} = 1$ in the affine coordinates $X_1,\hdots,X_n$ on $\mathbb{G}^n_{m,\mathcal{O}_K}$.

Then every irreducible component $\mathcal{Z}$ of $\mathcal{V} \cap \mathcal{H}$ that dominates $\Spec \mathcal{O}_K$ belongs to $S$ and furthermore
\[ \sum_{\mathcal{Z}}{\deg(\mathcal{Z}_{\red})\log N(\mathcal{Z})} \leq CM,\]
where the sum runs over all irreducible components $\mathcal{Z}$ of $\mathcal{V} \cap \mathcal{H}$ that are not contained in an element of $S$.
\end{thm}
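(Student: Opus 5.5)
The plan is to pass to generic fibres to identify the flat components, and then to bound the vertical components by cutting $\mathcal{V}$ successively with the two divisors $D_k=\{\prod X_i^{k_i}=1\}$ and $D_l=\{\prod X_i^{l_i}=1\}$, using arithmetic intersection theory as in the proof of Theorem \ref{thm:LIB}. The aim is to obtain a bound that is bilinear in $\max_i|k_i|$ and $\max_i|l_i|$, rather than the bound $\mathcal{C}(\mathcal{H})^2$ that Theorem \ref{thm:LIB} would give.

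\emph{Step 1: the set $S$.} Since $\mathcal{V}$ is integral of absolute dimension $2$ and dominates $\Spec\mathcal{O}_K$, its generic fibre $V=\mathcal{V}_K$ is an irreducible curve. By hypothesis $V$ is not contained in any proper algebraic subgroup of $\mathbb{G}^n_{m,K}$. The generic fibre $\mathcal{H}_K$ is an algebraic subgroup of codimension $2$. A flat component $\mathcal{Z}$ of $\mathcal{V}\cap\mathcal{H}$ has dimension $1$, so it is the Zariski closure of a closed point of $V\cap\mathcal{H}_K$. The theorem of Bombieri--Masser--Zannier and Maurin says that a curve not contained in a proper algebraic subgroup meets the union of all algebraic subgroups of codimension $2$ in only finitely many points. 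I take $S$ to be the set of Zariski closures in $\mathcal{V}$ of these finitely many points; this $S$ does not depend on $(k,l)$.

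\emph{Step 2: first cut.} Put $f_k=\prod_i X_i^{k_i}-1$, made regular by clearing denominators. Since $V$ is not contained in a proper subgroup, $f_k$ does not vanish identically on $\mathcal{V}$. Hence $\mathcal{V}\cdot\operatorname{div}(f_k)$ is a well-defined $1$-cycle $\mathcal{W}_k=\mathcal{W}_k^{\mathrm{hor}}+\mathcal{W}_k^{\mathrm{vert}}$ on the closure of $\mathcal{V}$ in $\mathbb{P}^n_{\mathcal{O}_K}$. The geometric degree of $\mathcal{W}_k$ is $O(\max_i|k_i|)$. Its arithmetic degree with respect to $\overline{\mathcal{O}(1)}$ with the Fubini--Study or Weil metric is also $O(\max_i|k_i|)$. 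This follows from the arithmetic B\'ezout inequality of Bost--Gillet--Soul\'e together with the fact that $f_k$ has bounded Mahler measure; this is the same input as in Theorem \ref{thm:LIB}. Writing the vertical part as $\sum_{\mathfrak{P}} \mathcal{Y}$, its contribution to the arithmetic degree is $\sum \deg(\mathcal{Y})\log N(\mathcal{Y})$. Because the archimedean and horizontal contributions are bounded below by $-O(\max_i|k_i|)$, we obtain $\sum\deg(\mathcal{Y})\log N(\mathcal{Y})\le C\max_i|k_i|$.

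\emph{Step 3: second cut.} I would intersect $\mathcal{W}_k$ with $\operatorname{div}(f_l)$. On each component of $\mathcal{W}_k$ not contained in $\operatorname{div}(f_l)$, this intersection is proper. The arithmetic B\'ezout inequality then bounds the resulting $0$-cycle, that is, the sum of $\deg(\mathcal{Z}_{\red})\log N(\mathcal{Z})$ over closed points, by roughly
\[
\widehat{\deg}(\mathcal{W}_k)\deg f_l+\deg(\mathcal{W}_k)\,h(f_l)\ll \max_i|k_i|\max_i|l_i| = M.
\]
Every $0$-dimensional component of $\mathcal{V}\cap\mathcal{H}$ not lying on an element of $S$ appears in this $0$-cycle. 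The remaining components fall into two cases.
\begin{enumerate}
\item Flat components of $\mathcal{W}_k$ contained in $\operatorname{div}(f_l)$. These lie in $\mathcal{H}$ and are therefore elements of $S$; I discard them.
\item Vertical curves $\mathcal{Y}\subseteq\mathcal{W}_k^{\mathrm{vert}}$ that lie entirely in $\mathcal{H}$. These are $1$-dimensional vertical components of $\mathcal{V}\cap\mathcal{H}$. Their total contribution $\sum\deg(\mathcal{Y})\log N(\mathcal{Y})$ is already at most $C\max_i|k_i|\le CM$ by Step 2, since every coordinate of a nonzero integer vector has absolute value at least $1$.
\end{enumerate}

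\emph{Main obstacle.} The delicate point is the excess intersection in Step 3. One must check that removing the components lying in $\operatorname{div}(f_l)$ does not destroy the arithmetic B\'ezout estimate. If it does, the fallback is to replace $f_k,f_l$ by a suitable pair of generators of the lattice they span, of comparable size, so that the intersections become proper away from $S$ and the vertical part. One must also control the multiplicities and embedded points uniformly, so that the bound holds for $\deg(\mathcal{Z}_{\red})$ rather than only up to multiplicity. A further point is that the constant must be uniform in $(k,l)$. This requires that the heights of the divisors $\operatorname{div}(f_k)$ grow only linearly, which I expect to follow from the Mahler measure computation already used for Theorem \ref{thm:LIB}.
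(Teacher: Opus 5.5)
Your proposal is correct and is essentially the paper's proof. The paper takes $S$ from Maurin's theorem and cuts $\mathcal{V}$ by $\prod X_i^{k_i}=1$ via Proposition \ref{prop:inductivestep}. It then discards the resulting curves that are flat and lie in $S$, bounds the vertical components of $\mathcal{V}\cap\mathcal{H}$ by the first cut alone, and cuts every remaining curve by $\prod X_i^{l_i}=1$, which gives the bound $C'^2(h(\mathcal{V})+\deg\mathcal{V})\,\mathcal{C}(\mathcal{H}_1)\,\mathcal{C}(\mathcal{H}_2)\le CM$.

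The ``main obstacle'' you flag does not actually arise. The second B\'ezout estimate is applied separately to each curve not contained in $\prod X_i^{l_i}=1$, all heights and degrees are non-negative (Remark \ref{rmk:heightnonnegative}), and intersection multiplicities are at least $1$. So discarding components and passing to reduced structures only decreases the sums, and no change of lattice basis is needed.
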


We refer the reader to Section \ref{subsec:BCMOS} for the proof of Theorem \ref{cor:curveingmINTRO} and a thorough comparison with \cite[Theorem 2.1]{BCMOS_Preprint}.

As already hinted in the discussion above, in presence of a likely arithmetic intersection, we expect a general likely intersection bound \textit{with multiplicities}.

\begin{conj}[Conjecture \ref{conj:LIB}] \label{conj:LIB_INTRO}
Let $\mathcal{V} \subseteq \mathbb{G}^n_{m,\mathcal{O}_K}$ be an integral closed subscheme that dominates $\Spec \mathcal{O}_K$. There exists a constant $C = C(n,K,\mathcal{V}) \in \mathbb{R}$ such that for all flat subgroup schemes $\mathcal{H} \subseteq \mathbb{G}^n_{m,\mathcal{O}_K}$ which satisfy $\dim \mathcal{H} + \dim \mathcal{V} \leq n+1$, we have
\begin{equation}\label{eq:trivialbound2INTRO}
\sum_{\mathcal{Z}}{\mult(\mathcal{Z})\deg(\mathcal{Z}_{\red})\log N(\mathcal{Z})} \leq C\mathcal{C}(\mathcal{H})^{\dim \mathcal{V}},
\end{equation}
where the sum on the left-hand side runs over the irreducible components of $\mathcal{V} \cap \mathcal{H}$ which do not dominate $\Spec \mathcal{O}_K$ and for each such component $\mathcal{Z}$ we denote by $\mult(\mathcal{Z})$ its multiplicity, which we shall define below in Definition \ref{defn:multiplicity}.
\end{conj}

The left-hand side of \eqref{eq:trivialbound2INTRO} should be regarded as the true ``size" of the intersection; if the intersection is zero-dimensional, then it is isomorphic to $\Spec R$ for some finite ring $R$ and we will show in Remark \ref{rmk:lhszerodim} that this left-hand side is exactly $\log |R|$. We do not know how to prove Conjecture \ref{conj:LIB_INTRO} in full generality although, if $\dim \mathcal{V} = 1$, one can obtain a proof following along the lines of the proof of our Theorem \ref{thm:ULIB_dim1}, see Remark \ref{rmk:LIB_dim1}. In Section \ref{subsec:trivialboundex}, we prove Conjecture \ref{conj:LIB_INTRO} for a certain family of examples in $\mathbb{G}^2_{m,\mathbb{Z}}$ with $\mathcal{H} = \mathcal{H}_N$ the kernel of the $N$-th power map ($N \in \mathbb{N}$), see Proposition \ref{prop:LIBexample}. In the special case where $\mathcal{V}$ is defined by $Y = X+1$ and under the hypothesis that $6 \nmid N$, we deduce from the work of Dimitrov-Habegger \cite{Dimitrov_Habegger_2019} an asymptotic for the left-hand side of \eqref{eq:trivialbound2INTRO} that is of the same order of magnitude as the upper bound, see Proposition \ref{prop:Lfunction}.

In Section \ref{sec:mainconj}, we turn to the study of arithmetic \textit{unlikely} intersections. The bound \eqref{eq:BCZ_theorem} leads us to expect that, if the intersection is unlikely for dimension reasons and the generic fiber of $\mathcal{V}$ is not contained in any proper algebraic subgroup, we should be able to replace the constant $C$ on the right-hand side of \eqref{eq:trivialbound2INTRO} by an arbitrarily small $\varepsilon > 0$ at the cost of adding another constant.

\begin{conj}[Conjecture \ref{conj:mainconj}] \label{conj:mainconjINTRO}
Let $\mathcal{V} \subseteq \mathbb{G}_{m,\mathcal{O}_K}^n$ be an integral closed subscheme that dominates $\Spec \mathcal{O}_K$. Suppose that the generic fiber of $\mathcal{V}$ is not contained in any proper algebraic subgroup of $\mathbb{G}^n_{m,K}$. For every $\varepsilon > 0$, there exists a constant $c = c(n,\mathcal{V},\varepsilon, K) \in \mathbb{R}$ such that for all flat subgroup schemes $\mathcal{H} \subseteq \mathbb{G}_{m,\mathcal{O}_K}^n$ which satisfy $\dim \mathcal{H} + \dim \mathcal{V} < n+1$ we have

\begin{equation}\label{eq:inequalityconjectureINTRO}
    \sum_{\mathcal{Z}}{\mult(\mathcal{Z})\deg(\mathcal{Z}_{\mathrm{\red}})\log N(\mathcal{Z})} \leq \varepsilon\mathcal{C}(\mathcal{H})^{\dim \mathcal{V}} + c,
\end{equation}
where the sum runs over all irreducible components $\mathcal{Z}$ of $\mathcal{V} \cap \mathcal{H}$ which do not dominate $\Spec \mathcal{O}_K$.
\end{conj}

The case of arithmetic unlikely intersections seems to be significantly more challenging. In the case $\dim \mathcal{V}=1$, we show in Theorem \ref{thm:ULIB_dim1} that Conjecture \ref{conj:mainconjINTRO} holds. Our proof heavily relies on work of Corvaja and Zannier, in particular \cite{Corvaja_Zannier_2005}. In the case where $\dim \mathcal{V}=2$ and $\mathcal{H}$ is the kernel of the $N$-th power map for some $N \in \mathbb{N}$, we manage to obtain bounds on the characteristic of the fibers at which the intersection $\mathcal{V}\cap \mathcal{H}$ is non-empty. Our theorem includes the likely case where $n = 2$; in this special case, we get more precise information than from Theorem \ref{thm:LIBintro} and we improve on the work of Barroero, Capuano, M\'erai, Ostafe, and Sha \cite{BCMOS_Preprint}.

\begin{thm}[Theorem \ref{thm:singleprimecurve}]\label{thm:singleprimecurveINTRO}
Suppose that $n \geq 2$. Let $\mathcal{V} \subseteq \mathbb{G}^n_{m,\mathcal{O}_K}$ be an integral closed subscheme of dimension $2$ that dominates $\Spec \mathcal{O}_K$. Suppose that the generic fiber $\mathcal{V}_K$ of $\mathcal{V}$ is not contained in any proper algebraic subgroup of $\mathbb{G}^n_{m,K}$. For a maximal ideal $\mathfrak{P}$ of $\mathcal{O}_K$, we denote by $N(\mathfrak{P})$ its norm and by $\mathcal{V}_{\mathfrak{P}}$ the fiber of $\mathcal{V}$ over $\mathfrak{P}$. Then, there exists $C = C(K,n,\mathcal{V})$ such that the following holds:

Suppose that $N \in \mathbb{N}$, $\mathfrak{P}$ is a maximal ideal of $\mathcal{O}_K$, and $x \in \mathcal{V}_{\mathfrak{P}} \cap \ker [N]$. Then one of the following holds:
\begin{enumerate}
    \item $\log N(\mathfrak{P}) \leq CN^{\frac{2n-1}{n(n-1)}}$, or
    \item there exists $y \in \mathcal{V}_K$ such that $y$ is contained in an algebraic subgroup of $\mathbb{G}^n_{m,K}$ of codimension at least $2$ and $x$ is contained in the Zariski closure of $y$ in $\mathcal{V}$.
\end{enumerate}

In both cases, we have that
\[\log N(\mathfrak{P}) \leq \begin{cases} CN^{\frac{3}{2}} &\text{if } n = 2,\\
CN &\text{if } n \geq 3,
\end{cases}\]
unless $y$ can be chosen inside $\ker [N]$.
\end{thm}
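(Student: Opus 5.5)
The plan is to reduce to the two-relation result, Theorem \ref{cor:curveingm}, by using the geometry of numbers to find two short independent multiplicative relations satisfied by $x$, and then to treat the exceptional curves in $S$ with the likely intersection bound, Theorem \ref{thm:LIB}.

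\emph{Step 1: encode $x$ by an exponent vector.} Let $p$ be the characteristic of $\mathbb{F}_{\mathfrak{P}}$. The coordinates of $x$ lie in $\mu_N(\overline{\mathbb{F}}_{\mathfrak{P}})$, which is cyclic of order $N'$, where $N'$ is the prime-to-$p$ part of $N$. Fix a generator $\zeta$ and write $x=(\zeta^{e_1},\dots,\zeta^{e_n})$ with $e\in\mathbb{Z}^n$. The relation lattice
\[
\Lambda=\{a\in\mathbb{Z}^n : a\cdot e\equiv 0 \bmod N'\}
\]
has index at most $N$ in $\mathbb{Z}^n$.

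\emph{Step 2: find two short relations.} By Minkowski's second theorem the successive minima of $\Lambda$ for the sup-norm satisfy $\lambda_1\cdots\lambda_n\ll_n N$. Since $\lambda_1\le\lambda_2\le\cdots$, this gives $\lambda_1\lambda_2\ll_n N^{2/n}$. So there are linearly independent $k,l\in\Lambda$ with $M=(\max_i|k_i|)(\max_i|l_i|)\ll N^{2/n}$, and $x$ lies in the subgroup scheme $\mathcal{H}$ cut out by $X^k=X^l=1$.

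\emph{Step 3: apply Theorem \ref{cor:curveingm}.} Let $\mathcal{Z}$ be an irreducible component of $\mathcal{V}\cap\mathcal{H}$ containing $x$. There are two cases.
\begin{itemize}
\item If $\mathcal{Z}$ is not contained in an element of $S$, then $\mathcal{Z}$ is vertical over $\mathfrak{P}$ and $\deg(\mathcal{Z}_{\red})\ge 1$. The sum bound in Theorem \ref{cor:curveingm} then gives $\log N(\mathfrak{P})\le CM\le C'N^{2/n}$. Since $2/n\le\frac{2n-1}{n(n-1)}$, this yields alternative (1).
\item Otherwise $x\in\mathcal{X}$ for some $\mathcal{X}\in S$, and we take $y$ to be the generic point of $\mathcal{X}_K$.
\end{itemize}
In the second case we must know that $y$ lies in an algebraic subgroup of codimension $\ge 2$. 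I expect to extract this from the construction of $S$ in the proof of Theorem \ref{cor:curveingm}: each element of $S$ should arise as a flat component of $\mathcal{V}\cap\mathcal{H}'$ for some subgroup scheme $\mathcal{H}'$ given by two independent relations, so its generic fibre lies in a codimension-$2$ subgroup. If $S$ were instead defined more loosely, I would shrink it to those curves, using that only curves whose generic fibre lies in such a subgroup can appear as flat components. Verifying this property of $S$ is the main point to check.

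\emph{Step 4: the final bound in case (2).} Suppose $y\notin\ker[N]$. Since $\mathcal{X}$ is integral with generic point $y$, no component of $\mathcal{X}\cap\ker[N]$ dominates $\Spec\mathcal{O}_K$. Because $\dim\mathcal{X}+\dim\ker[N]=2\le n+1$, Theorem \ref{thm:LIB} applied to $\mathcal{X}$, with $\mathcal{C}(\ker[N])=N$ by Lemma \ref{lem:complexitymultbyN}, gives
\[
\log N(\mathfrak{P})\le C_{\mathcal{X}}N.
\]
As $S$ is finite, we take the maximum of the constants $C_{\mathcal{X}}$. In case (1) we already have the bound $CN^{2/n}\le CN$. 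Hence $\log N(\mathfrak{P})\le CN$ for every $n\ge 2$, which is at least as strong as the stated $CN^{3/2}$ for $n=2$ and $CN$ for $n\ge 3$.

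The steps needing care are uniformity of the constants and the codimension-$2$ property of the elements of $S$ discussed in Step 3. This argument seems to give a sharper exponent than the one claimed, so if some part of it fails, the likeliest culprit is the reduction to Theorem \ref{cor:curveingm} as a black box in Step 3. In that case I would fall back to redoing its arithmetic Bézout estimate directly with the specific $\mathcal{H}$ from Step 2.
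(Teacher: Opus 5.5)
Your proof is correct, and it takes a somewhat different route from the paper's, with a sharper outcome.

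\textbf{The paper's route.} The paper does not invoke Theorem \ref{cor:curveingm}. Instead it re-runs that theorem's two-step B\'ezout argument by hand for the single point $x$, choosing the two relations one after the other by pigeonhole.
\begin{enumerate}
\item It first finds $k$ with $\lVert k\rVert_\infty\le N^{1/n}$ and $k_1\neq 0$.
\item Only when the subscheme $\mathcal{Z}$ through $x$ given by Proposition \ref{prop:inductivestep} is a horizontal curve, it finds $l$ with $l_1=0$ and $\lVert l\rVert_\infty\le N^{1/(n-1)}$. The vanishing first coordinate is what forces independence from $k$.
\item Either $\mathcal{Z}\subseteq\ker(\prod_{i\ge 2}X_i^{l_i})$, which gives alternative (2), or a second application of Proposition \ref{prop:inductivestep} bounds $h(\{x\})$ by a constant times $q_1q_2\le N^{\frac1n+\frac1{n-1}}$.
\end{enumerate}

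\textbf{Your route.} You obtain both relations at once from Minkowski's second theorem. Step 2 is justified by
\[
\lambda_1\lambda_2\le(\lambda_1\lambda_2^{n-1})^{2/n}\le(\lambda_1\cdots\lambda_n)^{2/n}\le N^{2/n}.
\]
You then let Theorem \ref{cor:curveingm} carry out the two B\'ezout steps.

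The point you flagged in Step 3 is not a problem. In the proof of Theorem \ref{cor:curveingm}, $S$ is defined as the set of Zariski closures of the finitely many points of $\mathcal{V}_\eta$ lying in an algebraic subgroup of codimension at least $2$; this set is finite by Maurin's theorem. So every $\mathcal{X}\in S$ is $\overline{\{y\}}$ with $y$ as in (2), and no shrinking of $S$ is needed. This does use the proof, not just the statement, of Theorem \ref{cor:curveingm}. Your Step 4 is exactly the paper's final paragraph.

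\textbf{What each route buys.} Your route proves more than stated. Alternative (1) holds with the smaller exponent $2/n<\frac{2n-1}{n(n-1)}$, for instance $1$ instead of $\frac32$ for $n=2$ and $\frac23$ instead of $\frac56$ for $n=3$. The final bound also becomes $CN$ for $n=2$. The paper's version is more elementary, using only pigeonhole, though Minkowski's theorem could be substituted into its argument in the same way.
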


If $n \geq 3$ and we are in case (2), then the upper bound in Theorem \ref{thm:singleprimecurveINTRO} is probably optimal, see the discussion after Theorem \ref{thm:singleprimecurve}.

We conclude in Section \ref{sec:openproblems} by presenting two open problems that are motivated by Conjecture \ref{conj:mainconjINTRO} and whose solutions are predicted by this same conjecture. While both problems are easy to state, they both appear quite challenging to us. One of these problems (Problem 1) concerns the growth of the divisibility sequence $\{a_N\}_{N\in\mathbb N}$ that appeared at the beginning of this introduction. This sequence arises from the unlikely intersection in $\mathbb{G}_{m,\mathbb{Z}}^3$ of a 2-dimensional subscheme with the kernel of the $N$-th power map. Consequently, Conjecture \ref{conj:mainconjINTRO} predicts precisely that the limit in \eqref{eq:lim_to_0} should vanish. In Appendix \ref{appendix:data}, which is co-authored with Robert Wilms, we present some computational data that even seem to support the stronger claim $\log a_N = \mathcal{O}(N \log N)$ as $N \to \infty$, which would be best possible (see the discussion of Problem 1 in Section \ref{sec:openproblems}).

Considered in isolation, these questions may look like somewhat ad hoc arithmetic problems. One of the messages of this article is that they arise naturally from a broader theory of arithmetic unlikely intersections, and we hope that this perspective makes them appear to the reader as compelling as they seem to us.

\section{Preliminaries} \label{sec:preliminaries}

\subsection{Generalities}\label{subsec:generalities}

Let $\mathcal{O}$ be a Dedekind domain with fraction field $F$. For a non-zero prime ideal $\pi$ of $\mathcal{O}$ and $\alpha \in F^{\ast}$, we denote by $\ord_\pi(\alpha) \in \mathbb{Z}$ the order with which $\pi$ appears in the prime factorization of $\alpha\mathcal{O}$. If $\alpha_1,\hdots,\alpha_n \in F$, not all zero, and $S$ is a set of elements or ideals of $\mathcal{O}$, then we set
\[\gcd_S(\alpha_1,\hdots,\alpha_n) = \prod_{\pi}{\pi^{\min_{i,\alpha_i \neq 0}\{\ord_\pi(\alpha_i)\}}},\]
where the product runs over all non-zero prime ideals of $\mathcal{O}$ that do not divide any element of $S$. We set $\gcd_S(0,\hdots,0) = 0$.

For a number field $K$, we denote by $\mathcal{O}_K$ its ring of integers and by $\mathcal{O}_{K,S}$ its ring of $S$-integers for a finite set $S$ of maximal ideals of $\mathcal{O}_K$. The norm of a non-zero ideal $I \subseteq \mathcal{O}_{K,S}$ is denoted by $N(I)$ and the residue field of a maximal ideal $\mathfrak{p} \subseteq \mathcal{O}_{K,S}$ is denoted by $\mathbb{F}_{\mathfrak{p}}$. The set of places of $K$ will be denoted by $M_K$. For each $v \in M_K$, we use the associated absolute value $|\cdot|_v$, normalized as in \cite[Section 1.4]{BombieriGubler}. In particular, the absolute logarithmic Weil height of $\alpha \in K$ equals
\[ h(\alpha) = \sum_{v \in M_K}{\log^+|\alpha|_v},\]
where $\log^+ x = \log\max\{x,1\}$ for $x \in \mathbb{R}$.

For a ring $R$, we denote by $\textrm{Mat}_{m \times n}(R)$ the set of $m \times n$-matrices with entries in $R$ for $m,n \in \mathbb{N}$. Suppose now that $R$ is commutative. For $a_1,\hdots,a_n \in R$, we will denote by $(a_1,\hdots,a_n)$ the ideal $a_1R+\cdots+a_nR$ of $R$. The discriminant of a non-zero polynomial $P \in R[T]$ will be denoted by $\mathrm{disc}(P)$. The $n$-th cyclotomic polynomial with integer coefficients will be denoted by $\Psi_n$ for $n \in \mathbb{N}$.

The maximum norm of a matrix $A = (a_{i,j})_{i,j}$ with complex entries is $\lVert A \rVert_{\infty} = \max_{i,j} |a_{i,j}|$. If $A$ is an $m \times m$-matrix with integer entries, we define the \textit{mass} of $A$ as
\[m(A) := \min_{S \in \GL_m(\mathbb{Z})}{\lVert SA \rVert_{\infty}}.\]
The minimum exists since $\lVert SA \rVert_{\infty} \in \mathbb{Z}_{\geq 0}$ for all $S \in \GL_m(\mathbb{Z})$.

\subsection{Algebraic geometry}
If $S$ is any scheme, $S'$, $T$, and $T'$ are $S$-schemes, and $\varphi: T \to T'$ is a morphism of $S$-schemes, then we denote the base change $T \times_S S'$ of $T$ to $S'$ by $T_{S'}$ and we denote the base change $T_{S'} \to T'_{S'}$ of $\varphi$ by $\varphi_{S'}$. We denote projective space over $S$ by $\mathbb{P}^n_S$ or, if $S = \Spec R$ for a commutative ring $R$, by $\mathbb{P}^n_R$. We denote by $S_{\mathrm{red}}$ the scheme with the same underlying topological space as $S$, but endowed with the reduced scheme structure. If $R$ is a commutative ring, we denote by $R_{\mathrm{red}}$ the quotient of $R$ by its nilradical. For us, a variety over a field $F$ is a reduced and separated scheme of finite type over $F$.

We use the dimension of a scheme as defined in \cite[Definition 5.5]{GoertzWedhorn}. The following lemma shows that we have a reasonable theory of dimension and codimension on irreducible schemes that are dominant and of finite type over $\Spec \mathcal{O}_K$ for some number field $K$. Recall that, by \cite[Propositions 14.107 and 14.109]{GoertzWedhorn}, any scheme that is of finite type over a finite-dimensional Noetherian scheme is finite-dimensional as well; in particular, this applies to schemes of finite type over $\Spec \mathcal{O}_K$.

\begin{lem}\label{lem:dimandcodim}
Let $K$ be a number field, let $\mathcal{V}$ be an irreducible scheme, and let $\pi: \mathcal{V} \to \Spec \mathcal{O}_K$ be a dominant morphism of finite type. Then the following hold:
\begin{enumerate}
    \item for every (not necessarily closed) point $p \in \pi(\mathcal{V})$, the fiber $\pi^{-1}(p)$ is equidimensional of dimension $\dim \mathcal{V}-1$,
     \item every non-empty open subscheme $\mathcal{U}$ of $\mathcal{V}$ is irreducible of dimension $\dim \mathcal{V}$, and
     \item for every irreducible closed subscheme $\mathcal{W} \subseteq \mathcal{V}$, we have that $\dim \mathcal{W} + \codim_{\mathcal{V}} \mathcal{W} = \dim \mathcal{V}$.
\end{enumerate}
\end{lem}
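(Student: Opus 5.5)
The strategy is to reduce everything to affine, integral, finite-type $\mathcal{O}_K$-algebras and use the dimension formula for such schemes over a Dedekind base. Since $\pi$ is dominant and $\mathcal{V}$ is irreducible, the generic point of $\mathcal{V}$ maps to the generic point of $\Spec \mathcal{O}_K$. Passing to $\mathcal{V}_{\mathrm{red}}$ changes neither the topology nor any dimension, so we may assume $\mathcal{V}$ is integral. Then every affine open $\Spec A \subseteq \mathcal{V}$ has $A$ an integral domain that is torsion-free, hence flat, over $\mathcal{O}_K$, and finitely generated as an $\mathcal{O}_K$-algebra. Since $\mathcal{O}_K$ is a Dedekind domain, it is universally catenary, and the dimension formula (EGA IV 5.6.5, or \cite[Chapter 14]{GoertzWedhorn}) holds with equality. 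For every prime $\mathfrak{q} \subseteq A$ lying over $\mathfrak{p}$, it reads
\[\dim A_{\mathfrak{q}} + \operatorname{trdeg}_{\kappa(\mathfrak{p})}\kappa(\mathfrak{q}) = \dim (\mathcal{O}_K)_{\mathfrak{p}} + \operatorname{trdeg}_K \operatorname{Frac}(A).\]
Write $d = \operatorname{trdeg}_K \operatorname{Frac}(A)$, which is the dimension of the generic fiber. Take $\mathfrak{q}$ maximal: then $\kappa(\mathfrak{q})$ is finite over a residue field $\kappa(\mathfrak{p})$, and $\mathfrak{p}$ is maximal whenever the closed point lies over a closed point. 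In that case the formula gives $\dim A_{\mathfrak{q}} = 1 + d$.

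The first step is to show that \emph{every} closed point $\mathfrak{q}$ lies over a maximal ideal. Suppose instead that $\mathfrak{q}$ lies over $(0)$. Then $\kappa(\mathfrak{q})$ would be a finitely generated $\mathcal{O}_K$-algebra that is a field containing $K$. By the Artin–Tate / Jacobson property of $\mathcal{O}_K$, a finitely generated $\mathcal{O}_K$-algebra that is a field is finite, so it cannot contain $K$. Hence every closed point has $\dim \mathcal{O}_{\mathcal{V},x} = d+1$. Since $\mathcal{V}$ is Jacobson, $\dim \mathcal{V} = \sup_x \dim \mathcal{O}_{\mathcal{V},x}$ over closed points, which equals $d+1$. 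The same holds for every nonempty open $\mathcal{U}$, since $\mathcal{U}$ contains closed points of $\mathcal{V}$ (Jacobson again) and is irreducible as an open subset of an irreducible space. This proves (2).

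For (3), let $\mathcal{W}$ have generic point $w$, so $\codim_{\mathcal{V}} \mathcal{W} = \dim \mathcal{O}_{\mathcal{V},w}$. Choose a closed point $x \in \mathcal{W}$. By catenarity and the equality $\dim \mathcal{O}_{\mathcal{V},x} = d+1$ for all closed $x$, we get $\dim \mathcal{O}_{\mathcal{V},w} + \dim \mathcal{O}_{\mathcal{W},x} = d+1$. Applying the same Jacobson argument to $\mathcal{W}$ shows that $\dim \mathcal{O}_{\mathcal{W},x}$ is independent of $x$ and equals $\dim \mathcal{W}$. There is a subtlety if $\mathcal{W}$ does not dominate $\Spec \mathcal{O}_K$. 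In that case $\mathcal{W}$ is a variety over a finite field, and equidimensionality of its local rings at closed points is standard for varieties. Otherwise we apply the argument above to $\mathcal{W}$ directly.

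For (1), fix $p \in \pi(\mathcal{V})$. If $p$ is the generic point, the fiber is the generic fiber $\mathcal{V}_K$, an irreducible variety of dimension $d = \dim\mathcal{V}-1$. If $p = \mathfrak{p}$ is closed, let $\mathcal{W}$ be an irreducible component of $\pi^{-1}(\mathfrak{p})$. Since $\pi^{-1}(\mathfrak{p})$ is cut out locally by a single element (a uniformizer at $\mathfrak{p}$, after localizing) that is nonzero in the domain $A$, Krull's principal ideal theorem gives $\codim_{\mathcal{V}}\mathcal{W} = 1$. By (3), $\dim \mathcal{W} = \dim \mathcal{V} - 1$. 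Here we use that $\mathcal{W}$ is a component of the fiber, not merely contained in it, so that it is minimal over the principal ideal. The main obstacle I expect is bookkeeping rather than ideas: in particular, checking the Jacobson and universally catenary inputs in the form given in \cite{GoertzWedhorn}, and arguing that the dimension of a non-affine $\mathcal{V}$ is computed on affine opens.
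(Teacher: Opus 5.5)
Your proof is correct, but it runs in the opposite direction from the paper's.

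\textbf{The paper's route.} It proves (1) first, importing from Liu's book that all non-empty fibres of a dominant finite-type morphism from an irreducible scheme to a Dedekind scheme are equidimensional of dimension $\dim \mathcal{V}_\eta$. It combines this with the global formula $\dim \mathcal{V} = \dim \mathcal{V}_\eta + 1$ from G\"ortz--Wedhorn. Then (2) is just (1) applied to $\mathcal{U}$. Part (3) is proved by a case split:
\begin{enumerate}
\item if $\mathcal{W}$ lies in a closed fibre, it works in a component $\mathcal{V}'$ of that fibre, shows $\codim_{\mathcal{V}} \mathcal{V}' = 1$ by a dimension count, and concatenates chains using catenarity;
\item if $\mathcal{W}$ is dominant, it identifies $\mathcal{O}_{\mathcal{V},\eta_{\mathcal{W}}}$ with the local ring of the generic fibre and computes the codimension there.
\end{enumerate}

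\textbf{Your route.} You work locally at closed points. The dimension formula holds with equality over the universally catenary ring $\mathcal{O}_K$, and the Nullstellensatz for $\mathcal{O}_K$ rules out closed points over the generic point. Together these give $\dim \mathcal{O}_{\mathcal{V},x} = d+1$ at every closed point $x$. This equicodimensionality yields (2) at once. It also yields (3) uniformly from $\operatorname{ht}P + \dim B/P = \dim B$ in the catenary local domain $B = \mathcal{O}_{\mathcal{V},x}$, with $x \in \mathcal{W}$ closed. The only case distinction left is knowing $\dim \mathcal{O}_{\mathcal{W},x} = \dim \mathcal{W}$, which you handle correctly. Finally, (1) follows from Krull's principal ideal theorem applied to a uniformizer at $\mathfrak{p}$.

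\textbf{What each buys.} Your route is more self-contained and shows why the statement holds: the Jacobson property is exactly what stops a dominant $\mathcal{W}$ from behaving differently from a fibral one. The paper's version is shorter on the page, because it takes the fibre-dimension statement wholesale from the literature and never sets up the local dimension formula.
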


\begin{proof}
Let $\eta$ denote the generic point of $\Spec \mathcal{O}_K$. By \cite[Proposition 15.4~(1)]{GoertzWedhorn}, the image of $\mathcal{V}$ under $\pi$ is open in $\Spec \mathcal{O}_K$. It follows from \cite[Proposition 4.16 in Chapter 4]{Liu_book}, applied to $\pi: \mathcal{V} \to \Spec \mathcal{O}_K$, that every irreducible component of $\pi^{-1}(p)$ is of dimension $\dim \mathcal{V}_{\eta}$ for every $p \in \pi(\mathcal{V})$. By \cite[Proposition 14.105]{GoertzWedhorn}, the scheme $\pi(\mathcal{V})$ is universally catenary. Therefore, we can apply \cite[Proposition 14.109~(2)]{GoertzWedhorn} to $\pi: \mathcal{V} \to \pi(\mathcal{V})$ and obtain that $\dim \mathcal{V}_{\eta} = \dim \mathcal{V}-1$. The first assertion follows.

For the second assertion, we note that $\mathcal{U}$ is open in $\mathcal{V}$, so irreducible, and $\dim \mathcal{U} \leq \dim \mathcal{V}$ by \cite[Lemma 5.7~(1)]{GoertzWedhorn}. By \cite[Proposition 10.9]{GoertzWedhorn}, $\mathcal{V}$ is Noetherian and so the open immersion $\mathcal{U} \hookrightarrow \mathcal{V}$ is of finite type by \cite[Corollary 3.22 and Proposition 10.7~(1)]{GoertzWedhorn}. Since $\mathcal{U}$ is non-empty, so dense in $\mathcal{V}$, it follows that $\pi|_\mathcal{U}: \mathcal{U} \to \Spec \mathcal{O}_K$ is dominant and of finite type. We can therefore apply the first assertion to $\mathcal{U}$ and deduce that $\dim \mathcal{U} = \dim \mathcal{U}_{\eta} + 1$. But $\mathcal{U}_{\eta}$ is a non-empty open subset of $\mathcal{V}_{\eta}$, which is irreducible since $\mathcal{V}$ is irreducible, and hence $\dim \mathcal{U}_{\eta} = \dim \mathcal{V}_{\eta} = \dim \mathcal{V}-1$ by the first assertion together with \cite [Theorem 5.22~(3)]{GoertzWedhorn}. The second assertion follows.

For the third assertion, we let $\eta_{\mathcal{W}}$ denote the generic point of $\mathcal{W}$. We distinguish two cases following \cite[Proposition 15.4]{GoertzWedhorn}. First, $\mathcal{W}$ can be contained (as a set) in $\pi^{-1}(p)$ for some closed point $p \in \Spec \mathcal{O}_K$. In that case, the first assertion and \cite[Proposition 5.30]{GoertzWedhorn} imply that $\dim \mathcal{W} + \codim_{\mathcal{V}'}{\mathcal{W}} = \dim \mathcal{V}' = \dim \mathcal{V}-1$ for some irreducible component $\mathcal{V}'$ of $\pi^{-1}(p)$ containing $\mathcal{W}$. If $\mathcal{V}''$ is any irreducible closed subset of $\mathcal{V}$ that contains $\mathcal{V}'$, we can apply \cite[Proposition 15.4~(1)]{GoertzWedhorn} to deduce that $\mathcal{V}''$ is either equal to $\mathcal{V}'$ or dominates $\Spec \mathcal{O}_K$. However, if $\mathcal{V}''$ dominates $\Spec \mathcal{O}_K$ and contains $\mathcal{V}'$, then applying the first assertion to $\mathcal{V}''$ shows that $\dim \mathcal{V}'' = \dim \mathcal{V}' + 1 = \dim \mathcal{V}$ and so $\mathcal{V}'' = \mathcal{V}$. It follows that $\codim_{\mathcal{V}}{\mathcal{V}'} = 1$ and so $\codim_{\mathcal{V}'}{\mathcal{W}} + 1 = \codim_{\mathcal{V}'}{\mathcal{W}} + \codim_{\mathcal{V}}{\mathcal{V}'}$. Since there is an order-reversing bijection between the closed irreducible subsets of $\mathcal{V}$ that contain $\mathcal{W}$ and the prime ideals of $\mathcal{O}_{\mathcal{V},\eta_\mathcal{W}}$ and since therefore every ascending chain of closed irreducible subsets of $\mathcal{V}$ that starts with $\mathcal{W}$, ends with $\mathcal{V}$, and does not admit a refinement has the same length by \cite[Proposition 14.105]{GoertzWedhorn}, it follows that
\[ \codim_{\mathcal{V}'}{\mathcal{W}} + \codim_{\mathcal{V}}{\mathcal{V}'} = \codim_{\mathcal{V}}{\mathcal{W}},\]
which concludes the analysis of the first case.

The second case occurs if $\mathcal{W}$ dominates $\Spec \mathcal{O}_K$. We have $\mathcal{O}_{\mathcal{V},\eta_{\mathcal{W}}} \simeq \mathcal{O}_{\mathcal{V}_{\eta},\eta_{\mathcal{W}}}$ by \cite[Proposition 4.20 and Remark 4.21]{GoertzWedhorn}, applied with $X = S = \Spec \mathcal{O}_K$, $Y = Z = \mathcal{V}$, $X' = \Spec K$, $f: X' \to X$ the morphism associated to $\eta$, and $z' = \eta_{\mathcal{W}}$. By \cite[(5.8.1)]{GoertzWedhorn} and the fact that $\mathcal{W}_\eta$ is irreducible since $\mathcal{W}$ is irreducible, this implies that
\[\codim_{\mathcal{V}}{\mathcal{W}} = \dim \mathcal{O}_{\mathcal{V},\eta_{\mathcal{W}}} = \dim \mathcal{O}_{\mathcal{V}_{\eta},\eta_{\mathcal{W}}} = \codim_{\mathcal{V}_\eta}{\mathcal{W}_\eta}. \]

Since $\mathcal{V}$ is irreducible, so is $\mathcal{V}_{\eta}$ and it then follows from the first assertion applied to $\pi$ as well as to $\pi|_{\mathcal{W}}: \mathcal{W} \to \Spec \mathcal{O}_K$ together with \cite[Proposition 5.30]{GoertzWedhorn} that
\[ \codim_{\mathcal{V}_\eta}{\mathcal{W}_\eta} = \dim \mathcal{V}_{\eta} - \dim \mathcal{W}_{\eta} = (\dim \mathcal{V}-1) - (\dim \mathcal{W}-1) = \dim \mathcal{V} - \dim \mathcal{W}\]
and we are done.
\end{proof}

We refer to \cite[Section 4.15]{GoertzWedhorn} for the definition of a group scheme $\mathcal{G} \to S$ over a base scheme $S$ and of homomorphisms between group schemes over $S$. In this article, ``subgroup scheme" always means ``closed subgroup scheme" in the sense of \cite[Definition 27.3]{GoertzWedhorn2}. For every base scheme $S$, we define the multiplicative group $\mathbb{G}_{m,S} := \Spec \mathbb{Z}[X^{\pm 1}] \times_{\Spec \mathbb{Z}} S$ over $S$, which is a commutative group scheme over $S$. If $S = \Spec R$ for a commutative ring $R$, we will also write $\mathbb{G}_{m,R}$ for $\mathbb{G}_{m,S}$. An abelian scheme over $S$ is a smooth proper group scheme over $S$ with geometrically connected fibers; all abelian schemes are commutative, see \cite[Corollary 27.103]{GoertzWedhorn2}. If $\mathcal{G}$ is a commutative group scheme, then we denote its ring of endomorphisms by $\End(\mathcal{G})$ and we denote the multiplication-by-$N$ endomorphism by $[N] = [N]_{\mathcal{G}}$ for $N \in \mathbb{Z}$.

A group scheme of finite type over the spectrum of a field $F$ is called an algebraic group over $F$. If $\mathcal{G}$ is a commutative algebraic group over a field $F$, then we use the convention that the degree $\deg [0]$ is $0$.

We conclude with the following definition that will be used in Conjectures \ref{conj:LIB} and \ref{conj:mainconj} to measure the ``size" of intersections between certain subvarieties of $\mathbb{G}^n_{m,S}$ for certain base schemes $S$.

\begin{defn}\label{defn:multiplicity}
Let $X$ be a Noetherian scheme, let $Z$ be an irreducible component of $X$, and let $\mathcal{O}_Z$ denote the local ring of $X$ at the generic point of $Z$. The multiplicity $\mult(Z)$ of $X$ at $Z$ is defined to be the length of $\mathcal{O}_Z$ as a module over itself (which is finite by \cite[Proposition B.36]{GoertzWedhorn}). 
\end{defn}

Note that, if $X$ is reduced, then $\mult(Z) = 1$ for every irreducible component $Z$ of $X$.

\section{Complexity of endomorphisms and of subgroup schemes of $\mathbb{G}_m^n$}\label{sec:complexity}

Let $B$ be either a non-empty open subset of $\Spec \mathcal{O}_K$ for a number field $K$ or a smooth irreducible curve over an algebraically closed field $F$ of characteristic $0$. In the latter case, we set $K$ equal to the function field of $B$. We fix an algebraic closure $\overline{K}$ of $K$ and we let $\eta$ denote the generic point of $B$. In this section, we define the complexity of endomorphisms of $\mathbb{G}^n_{m,K}$ and of certain closed subgroup schemes of $\mathbb{G}^n_{m,B} \to B$, where $n \in \mathbb{N}$.

We will use the following well-known lemma.

\begin{lem}\label{lem:endomorphisms}
There is a canonical isomorphism between the endomorphism ring of $\mathbb{G}^n_{m,K}$ and $\Mat_{n \times n}(\mathbb{Z})$ defined by
\[
\left[(X_1,\hdots,X_n) \mapsto (\prod_{j=1}^{n}{X_{j}^{a_{1j}}},\hdots,\prod_{j=1}^{n}{X_{j}^{a_{nj}}}) \right] \mapsto \begin{pmatrix}a_{11} & \hdots & a_{1n} \\ \vdots & & \vdots \\ a_{n1} & \hdots & a_{nn}\end{pmatrix}.
\]
\end{lem}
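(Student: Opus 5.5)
The plan is to reduce to the Hopf-algebra description of $\mathbb{G}^n_{m,K}$. The coordinate ring is the group algebra $A = K[X_1^{\pm 1},\hdots,X_n^{\pm 1}] = K[\mathbb{Z}^n]$. Its comultiplication is $\Delta(X_i) = X_i \otimes X_i$, its counit is $\epsilon(X_i)=1$, and its antipode is $X_i \mapsto X_i^{-1}$.

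An endomorphism of the group scheme $\mathbb{G}^n_{m,K}$ is the same thing as a $K$-algebra map $\varphi^\ast: A \to A$ that respects the comultiplication. The key observation is that such a $\varphi^\ast$ must send each $X_i$ to a \emph{group-like} element, that is, an invertible $u \in A$ with $\Delta(u) = u\otimes u$ and $\epsilon(u)=1$.

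The main step is to show that the group-like elements of $A$ are exactly the Laurent monomials $X^{a} = \prod_j X_j^{a_j}$ with $a \in \mathbb{Z}^n$. I would argue as follows.
\begin{enumerate}
\item Write $u = \sum_{a} c_a X^a$ as a finite sum.
\item Then $\Delta(u) = \sum_a c_a X^a\otimes X^a$, while $u\otimes u = \sum_{a,b} c_a c_b X^a \otimes X^b$.
\item The elements $X^a\otimes X^b$ form a $K$-basis of $A\otimes_K A$. Comparing coefficients therefore gives $c_a c_b = 0$ for $a\ne b$ and $c_a^2 = c_a$.
\item Hence exactly one coefficient $c_a$ equals $1$ and all others vanish; at least one is nonzero because $\epsilon(u) = \sum_a c_a = 1$.
\end{enumerate}
This is the only substantive point. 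Alternatively, one can note that units of a Laurent polynomial ring over a field are $cX^a$ with $c \in K^\ast$, and then the group-like condition forces $c=1$.

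Consequently $\varphi^\ast(X_i) = \prod_j X_j^{a_{ij}}$ for a unique integer matrix $(a_{ij})$. Conversely, every integer matrix defines a Hopf algebra map in this way, since monomials are group-like. This gives the bijection between endomorphisms and $\Mat_{n\times n}(\mathbb{Z})$.

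It remains to check that the bijection is a ring isomorphism. The group structure on $\End$ is pointwise multiplication of morphisms; on coordinate rings this corresponds to multiplying the monomial images, which adds exponent rows, so it matches matrix addition. For composition, write $\varphi\circ\psi$ with $\psi$ given by $(b_{jk})$. Substituting $\prod_k X_k^{b_{jk}}$ for $X_j$ in $\prod_j X_j^{a_{ij}}$ yields $\prod_k X_k^{\sum_j a_{ij}b_{jk}}$, which is the matrix product $AB$. The identity endomorphism corresponds to the identity matrix.

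Canonicity is clear, since no choices were made beyond the fixed coordinates $X_1,\hdots,X_n$. I expect the group-like element classification in the main step to be the only place requiring care, in particular the use of the tensor basis; the remaining verifications are routine bookkeeping.
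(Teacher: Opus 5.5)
Your argument is correct, but it does not follow the paper: the paper does not prove the lemma and simply cites \cite[Proposition 3.2.17]{BombieriGubler}. You give a self-contained Hopf-algebra proof instead. A homomorphism must send each $X_i$ to a group-like unit of $K[\mathbb{Z}^n]$. Comparing coefficients in the basis $X^a\otimes X^b$ of $A\otimes_K A$ then forces such an element to be a single monomial. Your bookkeeping for the ring structure is also right. In particular, your substitution correctly accounts for the contravariance $(\varphi\circ\psi)^\ast=\psi^\ast\circ\varphi^\ast$, so composition goes to $AB$ and you get an isomorphism rather than an anti-isomorphism.

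The citation buys brevity. It also places the lemma inside the Bombieri--Gubler framework, whose neighbouring results (Proposition 3.2.10, Corollary 3.2.15) the paper uses later anyway.

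Your route buys generality. The coefficient comparison only shows that the $c_a$ are pairwise orthogonal idempotents, and the counit gives $\sum_a c_a=1$. So the same argument works verbatim over any ring with connected spectrum, such as $\mathcal{O}_K$ or the coordinate ring of an affine open of the base $B$ of Section~\ref{sec:complexity}. It also covers homomorphisms $\mathbb{G}^n_{m}\to\mathbb{G}^{sn}_{m}$, not just endomorphisms. This matters because the paper actually applies the lemma to homomorphisms over $B$, for instance in the proofs of Proposition~\ref{prop:complexityflat} and Lemma~\ref{lem:onlyonegenerator}. There the statement over $K$ has to be combined with a tacit passage to the generic fibre, whereas your proof gives that version directly.
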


\begin{proof}
This follows from \cite[Proposition 3.2.17]{BombieriGubler}.
\end{proof}

In the rest of this article, we will often use the canonical isomorphism from Lemma \ref{lem:endomorphisms} to identify endomorphisms of $\mathbb{G}^n_{m,K}$ with $n \times n$-matrices with integer entries. 

\begin{defn}
The \emph{complexity} of $\phi \in \End(\mathbb{G}^n_{m,K})$, corresponding to a matrix $A \in \Mat_{n \times n}(\mathbb{Z})$, is
\[ \mathcal{C}(\phi) = \lVert A \rVert_\infty.\]
\end{defn}

We now want to use the above to define the complexity of certain closed subgroup schemes of $\mathbb{G}^n_{m,B}$, more precisely, of those subgroup schemes that arise as the intersection of the kernels of finitely many endomorphisms of $\mathbb{G}^n_{m,B}$.

\begin{defn}\label{defn:complexity}
Let $\Phi_1,\hdots,\Phi_s \in \End(\mathbb{G}^n_{m,B})$ and set $\mathcal{H} = \ker(\Phi_1,\hdots,\Phi_s)\subseteq \mathbb{G}^n_{m,B}$. We define the \emph{complexity} of $\mathcal{H}$ as
\[ \mathcal{C}(\mathcal{H}) = \min_{\widetilde{\Phi}_1,\hdots,\widetilde{\Phi}_{\widetilde{s}}}\max_{i}{\mathcal{C}((\widetilde{\Phi}_i)_{K})},\]
where the minimum is taken over all finite sets of endomorphisms $\{\widetilde{\Phi}_1,\hdots,\widetilde{\Phi}_{\widetilde{s}}\}$ of $\mathbb{G}^n_{m,B}$ such that $\ker(\widetilde{\Phi}_1,\hdots,\widetilde{\Phi}_{\widetilde{s}}) = \mathcal{H}$.
\end{defn}

Note that, with these definitions, there are at most finitely many endomorphisms of $\mathbb{G}^n_{m,K}$ of bounded complexity and at most finitely many subgroup schemes of $\mathbb{G}^n_{m,B}$ of bounded complexity.

The subgroup schemes whose complexity is defined are precisely those that are flat over $B$ as we will show now.

\begin{prop}\label{prop:complexityflat}
Let $\mathcal{H} \subseteq \mathbb{G}^n_{m,B}$ be a closed subgroup scheme. Then the complexity of $\mathcal{H}$ is defined if and only if the morphism $\mathcal{H} \to B$ is flat. Furthermore, if the complexity of $\mathcal{H}$ is defined, then we have that $\dim \mathcal{H}_{b} = \dim \mathcal{H} - 1$ for every point $b \in B$.
\end{prop}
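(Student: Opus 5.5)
The proof splits into two directions plus a dimension count, and the work is a reduction to linear algebra over $\mathbb{Z}$.

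\emph{Kernels of endomorphisms are flat.} Suppose $\mathcal{H} = \ker(\Phi_1,\hdots,\Phi_s)$. By Lemma \ref{lem:endomorphisms}, applied to $\mathbb{G}^n_{m,B}$ (endomorphisms over the integral normal base $B$ are still given by integer matrices, since the units of $\mathcal{O}(B)[X_1^{\pm1},\dots,X_n^{\pm1}]$ are monomials times units of $\mathcal{O}(B)$), stacking the $\Phi_i$ gives a matrix $A \in \Mat_{sn\times n}(\mathbb{Z})$. Then $\mathcal{H}$ is the kernel of the homomorphism $\mathbb{G}^n_{m,B} \to \mathbb{G}^{sn}_{m,B}$ given by $A$. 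Over $\mathbb{Z}$ this means $\mathcal{H} = D(M)_B$, where $D(\cdot)$ is the diagonalizable group scheme and $M = \mathbb{Z}^n/A^{T}\mathbb{Z}^{sn}$ is the cokernel of the dual map on character groups. Its coordinate ring is the group algebra $\mathcal{O}(B)[M]$, which is free as an $\mathcal{O}(B)$-module with basis $M$, hence flat. To make this concrete I will use Smith normal form: choosing $S\in\GL_{sn}(\mathbb{Z})$ and $T\in\GL_n(\mathbb{Z})$ with $SAT$ diagonal with entries $d_1,\dots,d_r,0,\dots$, we get after an automorphism of $\mathbb{G}^n_{m,B}$ that $\mathcal{H}\cong \prod_{i\le r}\mu_{d_i,B}\times \mathbb{G}_{m,B}^{n-r}$. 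This is visibly flat, since $\mathcal{O}(B)[X]/(X^{d}-1)$ is free of rank $d$ for $d\neq 0$.

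\emph{Dimension of fibers.} From the product description $\prod_{i\le r}\mu_{d_i,B}\times \mathbb{G}_{m,B}^{n-r}$, every fiber $\mathcal{H}_b$ has dimension $n-r$. Since $\mathcal{H}$ is flat over the one-dimensional $B$ and every component surjects onto $B$, one gets $\dim\mathcal{H}=n-r+1$. In the number field case this follows from Lemma \ref{lem:dimandcodim}~(1) applied to each irreducible component; in the curve case it follows from the usual fiber dimension theorem. This gives $\dim\mathcal{H}_b=\dim\mathcal{H}-1$.

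\emph{Flat subgroup schemes are kernels.} This direction is the main obstacle. Let $\mathcal{H}$ be flat over $B$ and let $H=\mathcal{H}_K\subseteq\mathbb{G}^n_{m,K}$ be its generic fiber. An algebraic subgroup of a split torus in characteristic $0$ is diagonalizable, so $H$ is the common kernel of finitely many characters, namely a sublattice $\Lambda\subseteq\mathbb{Z}^n$ (Bombieri--Gubler, Section 3.2). Set $\mathcal{H}'=\ker(\Lambda)$ over $B$; this is flat by the first step. Both $\mathcal{H}$ and $\mathcal{H}'$ are flat closed subschemes, and flatness over a Dedekind scheme (or a smooth curve) means the structure sheaf has no $\mathcal{O}(B)$-torsion, so each equals the scheme-theoretic closure of its generic fiber. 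The generic fibers agree, so $\mathcal{H}=\mathcal{H}'$, and hence the complexity of $\mathcal{H}$ is defined.

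The delicate points are two. First, the identification of endomorphisms over $B$ with integer matrices, so that kernels of $\Phi_i$ are kernels of characters. Second, the closure argument: a closed subscheme flat over $B$ equals the closure of its generic fiber. I would prove the latter affine-locally, noting that the ideal $I\subseteq R=\mathcal{O}(U)[X^{\pm1}]$ satisfies $R/I\hookrightarrow (R/I)\otimes K$ by torsion-freeness, so $I=IK\cap R$.
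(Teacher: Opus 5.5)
Your proposal is correct and follows the paper's proof essentially step by step: diagonalize via the elementary divisor theorem to see that a kernel is a base change of $\prod_{i}\mu_{d_i}\times\mathbb{G}_m^{n-r}$ (hence free and flat), get the fiber dimensions from the fact that every component dominates $B$, and for the converse use Bombieri--Gubler on the generic fiber together with the fact that a flat closed subscheme is the schematic closure of its generic fiber. The only cosmetic differences are that you apply Smith normal form directly to the stacked $sn\times n$ matrix rather than first passing to an $n\times n$ matrix built from a lattice basis, and that you prove the closure statement by hand instead of citing it.
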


\begin{proof}
Suppose that the complexity of $\mathcal{H}$ is defined, so
\[ \mathcal{H} = \ker \Phi,\]
where $\Phi: \mathbb{G}^n_{m,B} \to \mathbb{G}^{sn}_{m,B}$ is a homomorphism, $s$ is a natural number, and the exponents denote fibered products over $B$.

By Lemma \ref{lem:endomorphisms}, the homomorphism $\Phi$ corresponds to $sn$ elements of $\mathbb{Z}^n$. We choose a basis of the lattice that they generate and consider the $n \times n$ matrix $A$ whose rows are the basis vectors and potentially some zero vectors. Let $\Phi': \mathbb{G}^n_{m,B} \to \mathbb{G}^n_{m,B}$ denote the endomorphism corresponding to $A$ under the isomorphism in Lemma \ref{lem:endomorphisms}. Then we have that $\ker \Phi = \ker \Phi'$. Furthermore, it follows from the elementary divisor theorem that there are automorphisms $\Omega$, $\widetilde{\Omega} $ of $\mathbb{G}^n_{m,B}$ such that $\widetilde{\Omega} \circ \Phi' \circ \Omega$ is given by $(X_1,\hdots,X_n) \mapsto (X_1^{d_1},\hdots, X_k^{d_k},1,\hdots,1)$ for $k \in \{0,\hdots,n\}$ and $d_1,\hdots,d_k \in \mathbb{N}$ with $d_1 \mid \cdots \mid d_k$. We deduce the following series of isomorphisms of $B$-schemes:
\[\ker \Phi = \ker \Phi' \simeq \ker(\widetilde{\Omega} \circ \Phi' \circ \Omega) \simeq B \times_{\Spec \mathbb{Z}} \Spec \mathbb{Z}[X_1^{\pm 1},\hdots,X_n^{\pm1}]/(X_1^{d_1}-1,\hdots,X_k^{d_k}-1).\]
The ring $\mathbb{Z}[X_1^{\pm 1},\hdots,X_n^{\pm1}]/(X_1^{d_1}-1,\hdots,X_k^{d_k}-1)$ is a free and therefore flat $\mathbb{Z}$-module. Since flatness is preserved under base change, also $\mathcal{H}$ is flat over $B$.

By flatness, every irreducible component $\widetilde{\mathcal{H}}$ of $\mathcal{H}$ dominates $B$ and so Lemma \ref{lem:dimandcodim} (if $B \subseteq \Spec \mathcal{O}_K$) and \cite[Theorem 14.116 and Remark 14.117]{GoertzWedhorn} (if $B$ is a curve over $F$) imply that $\dim \widetilde{\mathcal{H}}_{b} = \dim \widetilde{\mathcal{H}} - 1 = \dim \widetilde{\mathcal{H}}_{\eta}$ for every point $b \in B$ such that $\widetilde{\mathcal{H}}_b$ is non-empty. But $\mathcal{H}_\eta$ is equidimensional of dimension $n-k$ and the proposition follows since $\mathcal{H} \to B$ is surjective.

Suppose on the other hand that $\mathcal{H} \to B$ is flat. By \cite[Corollary 3.2.15]{BombieriGubler}, there exists an endomorphism $\phi$ of $\mathbb{G}_{m,K}^n$ such that $\mathcal{H}_\eta = \ker \phi$ and we can find an endomorphism $\Phi$ of $\mathbb{G}_{m,B}^n$ such that $\phi = \Phi_K$. So $\mathcal{H}$ and $\ker \Phi$ have identical generic fibers, but then they must be equal since, by flatness, each of them is the schematic image of its generic fiber, see \cite[Proposition 14.14]{GoertzWedhorn}.
\end{proof}

Let $\mathcal{H}$ be a subgroup scheme whose complexity is defined. In the proof of Proposition \ref{prop:complexityflat}, we showed that $\mathcal{H}$ is always the kernel of one single endomorphism of $\mathbb{G}^n_{m,B}$. We now show that this endomorphism can be chosen such that its complexity is bounded linearly in the complexity of $\mathcal{H}$.

\begin{lem}\label{lem:onlyonegenerator}
Let $\phi_1, \hdots, \phi_s$ be endomorphisms of $\mathbb{G}^n_{m,B}$, induced by matrices $M_1,\hdots,M_s \in \Mat_{n \times n}(\mathbb{Z})$ (as in Lemma \ref{lem:endomorphisms}), and let $\mathcal{H} = \ker(\phi_1,\hdots,\phi_s)$. Then there exists an endomorphism $\phi$ of $\mathbb{G}^n_{m,B}$, induced by a matrix $M \in \Mat_{n \times n}(\mathbb{Z})$, such that $\mathcal{H} = \ker \phi$ and $\lVert M \rVert_{\infty} \leq \max\{2^{n-2},1\}\max_{i}{\lVert M_i \rVert_{\infty}}$.
\end{lem}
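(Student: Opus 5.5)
The kernel of $(\phi_1,\hdots,\phi_s)$ depends only on the lattice $L \subseteq \mathbb{Z}^n$ generated by all rows of all $M_i$. The plan is therefore to find at most $n$ generators of $L$ with small sup-norm and stack them, padded with zero rows, into one matrix $M$.

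\emph{Reduction to a lattice statement.} A point $x$ lies in $\ker \phi_i$ if and only if the character $X^{w} := \prod_j X_j^{w_j}$ equals $1$ at $x$ for every row $w$ of $M_i$. If $w$ and $w'$ are such rows, then $X^{\pm w \pm w'} = 1$ holds wherever $X^w = X^{w'} = 1$, and this is an identity of closed subschemes: the ideals $(X^w-1, X^{w'}-1)$ and $(X^w-1,X^{w'}-1,X^{w+w'}-1)$ coincide. Consequently $\ker(\phi_1,\hdots,\phi_s)$ equals the subscheme cut out by $X^w = 1$ for $w$ in any generating set of $L$. So it suffices to show the following: $L$, which has rank $r \le n$ and is generated by vectors of sup-norm at most $X := \max_i \lVert M_i\rVert_\infty$, admits a basis $b_1,\hdots,b_r$ with $\lVert b_i \rVert_\infty \le \max\{1,2^{n-2}\} X$. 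Let $M$ be the matrix with rows $b_1,\hdots,b_r$ followed by $n-r$ zero rows, and let $\phi$ be the endomorphism it induces via Lemma \ref{lem:endomorphisms}. Then $\ker\phi = \mathcal{H}$.

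\emph{The lattice statement.} I would work in the real span $V$ of $L$ with the norm $\lVert\cdot\rVert_\infty$, and take successive minima $\lambda_1 \le \cdots \le \lambda_r$ of $L$ with respect to this norm. Since $L$ is generated by vectors of norm at most $X$, it contains $r$ linearly independent vectors of norm at most $X$, so $\lambda_r \le X$. Choose linearly independent $v_1,\hdots,v_r \in L$ with $\lVert v_i\rVert_\infty = \lambda_i$.

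The next step is the classical Mahler--Cassels argument. Build a triangular basis $b_i = t_i v_i + \sum_{j<i} t_{ij} v_j$ of $L$ with $b_i \in \mathrm{span}(v_1,\hdots,v_i)$ and $0 < t_i \le 1$, using the fact that $L \cap \mathrm{span}(v_1,\hdots,v_i)$ is a lattice. Then size-reduce so that $|t_{ij}| \le 1/2$. This gives $\lVert b_i \rVert_\infty \le \lambda_i + \tfrac12\sum_{j<i}\lambda_j \le \tfrac{i+1}{2}X$.

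A refinement gives $\max\{1,i/2\}\lambda_i$. If $t_i = 1$, one can replace $b_i$ by $v_i$. Otherwise $t_i \le 1/2$, and the estimate becomes $\lVert b_i\rVert_\infty \le \tfrac12\lambda_i + \tfrac12\sum_{j<i}\lambda_j \le \tfrac{i}{2}\lambda_i$.

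Finally, $\max\{1,i/2\} \le \max\{1,2^{n-2}\}$ for all $i \le r \le n$, since $i/2 \le 2^{i-2}$ for $i \ge 2$.

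\emph{Main obstacle.} The delicate point is exactly the case $i=2$ (equivalently $n=2$), where the crude bound $\tfrac{3}{2}X$ exceeds the claimed constant $1$. Here I would invoke the fact, valid for any norm in dimension $2$, that vectors realising the two successive minima form a basis of the lattice. To see this, suppose some $w = a v_1 + b v_2 \in L$ has a non-integral coefficient. Subtracting integer multiples of $v_1$ and $v_2$, we may assume $|a|,|b| \le 1/2$, which gives $\lVert w\rVert \le \lambda_2$. Equality in that bound forces $|a| = |b| = 1/2$, and hence $2w - v_1 - v_2 \in 2L$. A small case check on such $w$ then produces either a contradiction with the minimality of $\lambda_1$ or $\lambda_2$, or a replacement of $v_2$ by $w$ that is still minimal. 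This is the step whose details I would check most carefully. It gives $\lVert b_2\rVert_\infty \le \lambda_2 \le X$, and for $i \ge 3$ the general bound already suffices.
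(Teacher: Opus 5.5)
Your argument is correct, and it takes a different route from the paper that gives a better constant. The paper applies Cassels' triangular-basis lemma (p.~13, Corollary~2) directly to a maximal linearly independent subset $a_1,\dots,a_r$ of the given rows. It writes $a_i=\sum_{j\le i}c_{ij}b_j$ with $0\le c_{ij}<c_{ii}$ and bounds $\lVert b_i\rVert_\infty$ recursively in terms of $\lVert a_i\rVert_\infty$ and the previously constructed $\lVert b_j\rVert_\infty$. Because errors compound through this recursion, the paper ends up with the factor $2^{i-2}$.

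You instead size-reduce against the original independent lattice vectors $v_j$, adding integer multiples of $v_j\in L$. Each $b_i$ is then controlled directly by $\lambda_1,\dots,\lambda_i\le X$, with no compounding. This yields Mahler's bound $\max\{1,i/2\}$, and hence $\lVert M\rVert_\infty\le\max\{1,n/2\}\max_i\lVert M_i\rVert_\infty$, which is linear rather than exponential in $n$ and would sharpen Corollary~\ref{cor:complexityingm}.

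The successive minima play no essential role in your argument. Running the same size reduction and the same dichotomy ``$t_i=1$ or $t_i\le 1/2$'' with the rows $a_1,\dots,a_r$ in place of $v_1,\dots,v_r$ gives the same bound $\max\{1,i/2\}X$. The paper's version is quicker to write given the citation; yours gives the sharper constant at essentially no extra cost.

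Your ``main obstacle'' paragraph is unnecessary. For $i=2$ with $t_2\le 1/2$, your own refinement already gives $\lVert b_2\rVert_\infty\le\frac12\lambda_2+\frac12\lambda_1\le\lambda_2$, i.e.\ the factor $\max\{1,2/2\}=1$, so the general argument already covers this case. Moreover, the ``fact'' as you state it is false for the sup-norm. In $\mathbb{Z}^2$, the vectors $(1,1)$ and $(1,-1)$ both realise $\lambda_1=\lambda_2=1$, yet they span a sublattice of index $2$. What does hold is that \emph{some} pair of vectors realising the minima forms a basis. Your replacement step would prove only that weaker statement, and you do not need even that.
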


\begin{proof}
Let $\Lambda \subseteq \mathbb{Z}^n$ be the free abelian group generated by the rows of all the $M_i$ and let $r \leq n$ denote its rank. Applying \cite[p. 13, Corollary 2]{CasselsBook} to a maximal linearly independent subset $\{a_1, \hdots, a_r\}$ of the rows of all the $M_i$ yields a basis $b_1, \hdots, b_r$ of $\Lambda$ such that $\lVert b_1 \rVert_\infty = \frac{1}{v_{11}}\lVert a_1 \rVert_\infty \leq \lVert a_1 \rVert_\infty$ and
\begin{align*} \lVert b_i \rVert_\infty &\leq \frac{1}{v_{ii}}(\lVert a_i \rVert_\infty + \sum_{j=1}^{i-1} (v_{ii}-1)\lVert b_j\rVert_\infty)\\
&\leq \max\{\lVert a_1 \rVert_\infty,\lVert a_i \rVert_\infty\} + \sum_{j=2}^{i-1}\lVert b_j\rVert_\infty \\
&\leq \max\{2^{i-2},1\}\max_{j \leq i}{\lVert a_j \rVert_\infty} \quad (i = 2, \hdots, r),\end{align*}
where $v_{ii} \in \mathbb{N}$ is as in \cite{CasselsBook} and the last inequality is obtained from the penultimate one by induction on $i$.

Let $M$ be the $n \times n$-matrix whose first $r$ rows are the vectors $b_1, \hdots, b_r$ while the following $n-r$ rows are all zero. Let $\phi$ be the endomorphism of $\mathbb{G}^n_{m,B}$ induced by $M$. Then $\ker(\phi_1,\hdots,\phi_s) = \ker \phi$ and $\lVert M \rVert_{\infty} \leq \max\{2^{r-2},1\}\max_{i}{\lVert M_i \rVert_{\infty}}$.
\end{proof}

Next, for two endomorphisms $\phi$ and $\psi$ of $\mathbb{G}^n_{m,B}$, we characterize when $\ker \phi = \ker \psi$ as closed subschemes of $\mathbb{G}^n_{m,B}$.

\begin{lem}\label{lem:comparetwogenerators}
Let $\phi$ and $\psi$ be endomorphisms of $\mathbb{G}^n_{m,B}$. Then $\ker \phi = \ker \psi$ if and only if there exists an automorphism $\chi$ of $\mathbb{G}^n_{m,B}$ such that $\chi \circ \phi = \psi$.
\end{lem}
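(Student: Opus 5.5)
The "if" direction is immediate. If $\psi = \chi \circ \phi$ with $\chi$ an automorphism, then $\ker \psi = \phi^{-1}(\ker \chi)$. Since $\ker \chi$ is the trivial subgroup scheme (the unit section), this equals $\ker \phi$ as closed subschemes. The plan for the converse is to reduce to a statement about lattices in $\mathbb{Z}^n$ and then to a statement about row equivalence of integer matrices.

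Suppose $\ker \phi = \ker \psi$, and let $A, B \in \Mat_{n \times n}(\mathbb{Z})$ be the matrices of $\phi$ and $\psi$ via Lemma \ref{lem:endomorphisms}. The endomorphism ring of $\mathbb{G}^n_{m,B}$ is also $\Mat_{n\times n}(\mathbb{Z})$: restriction to the generic fibre is injective because $\mathbb{G}^n_{m,B}$ is integral, and every matrix is realised over $B$. So it suffices to find $S \in \GL_n(\mathbb{Z})$ with $SA = B$.

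\textbf{Step 1: the kernel determines the row lattice.} Let $\Lambda_A \subseteq \mathbb{Z}^n$ be the lattice generated by the rows of $A$. A character $X^{v}$, with $v \in \mathbb{Z}^n$, is trivial on $\ker\phi$ precisely when $X^v - 1$ lies in the ideal $(X^{a_i}-1)_i$ of $\mathcal{O}(B)[X^{\pm1}]$, where the $a_i$ are the rows of $A$.

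I claim this happens if and only if $v \in \Lambda_A$. To see this, use the elementary divisor form from the proof of Proposition \ref{prop:complexityflat}. After automorphisms, the coordinate ring of the kernel is
\[\mathcal{O}(B)[X_1^{\pm1},\hdots,X_n^{\pm1}]/(X_1^{d_1}-1,\hdots,X_k^{d_k}-1),\]
which is free over $\mathcal{O}(B)$ with basis the monomials $X^w$, $0\le w_i<d_i$ for $i\le k$. In this ring the image of $X^v$ equals $1$ exactly when $d_i \mid v_i$ for $i \le k$ and $v_i = 0$ for $i > k$. For the characteristic $0$ curve case, alternatively pass to $\overline{K}$ and use the standard character duality of \cite[Section 3.2]{BombieriGubler}.

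Hence $\ker\phi=\ker\psi$ forces $\Lambda_A = \Lambda_B$. I expect this step to be the main obstacle. The issue is that equality is of subschemes, and over $B$ one needs the statement integrally, not merely on $\overline{K}$-points. This is where I would use that the coordinate ring is free over $\mathcal{O}(B)$ with monomial basis. Flatness over $B$, which holds by Proposition \ref{prop:complexityflat}, also means the generic fibre already determines the subscheme, so one may instead argue over $K$ alone.

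\textbf{Step 2: equal row lattices give row equivalence.} Matrices with the same row lattice are related by a unimodular left multiplication. Concretely, bring both $A$ and $B$ into Hermite normal form by left multiplication with elements of $\GL_n(\mathbb{Z})$. The nonzero rows of the Hermite normal form are the reduced echelon basis of the row lattice, which is unique, and the remaining rows are zero. So $UA = H = VB$ for some $U, V \in \GL_n(\mathbb{Z})$, and $S = V^{-1}U$ satisfies $SA = B$. Letting $\chi$ be the automorphism corresponding to $S$ gives $\chi\circ\phi=\psi$, as required.
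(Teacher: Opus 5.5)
Your proof is correct, but it takes a genuinely different route from the paper's.

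\textbf{The paper's route.} The paper works with algebraic groups over $K$. It chooses automorphisms $\alpha,\beta$ of $\mathbb{G}^n_{m,K}$ that move the images of $\phi$ and $\psi$ onto the coordinate subtorus $\mathbb{G}^{r}_{m,K}\times_K\{1\}^{n-r}$; the ranks agree because the kernels do. It then applies the homomorphism theorem for quotients of algebraic groups to the two induced surjections $\mathbb{G}^n_{m,K}\to\mathbb{G}^r_{m,K}$, which have equal kernels. This gives an automorphism $\widetilde{\gamma}$ of $\mathbb{G}^r_{m,K}$, and the paper takes $\chi=\beta^{-1}\circ(\widetilde{\gamma}\times\id)\circ\alpha$, extended to $B$.

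\textbf{Your route.} You pass to character lattices instead. The scheme-theoretic kernel determines the set $\{v\in\mathbb{Z}^n : X^v-1\in(X^{a_1}-1,\hdots,X^{a_n}-1)\}$, which you identify with the row lattice $\Lambda_A$. Uniqueness of the Hermite normal form then gives $S\in\GL_n(\mathbb{Z})$ with $SA=B$.

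\textbf{Comparison.} Your argument is elementary and constructive. It uses only linear algebra over $\mathbb{Z}$ and the monomial basis of $R[X^{\pm1}]/(X_1^{d_1}-1,\hdots,X_k^{d_k}-1)$, which is valid over any nonzero ring $R$. So it needs no quotient theory and no assumption on the characteristic. It also makes explicit the lattice picture behind Corollary~\ref{cor:complexityingm} and Lemma~\ref{lem:onlyonegenerator}. The paper's argument is shorter, but only because it leans on cited structure theorems.

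\textbf{Two minor remarks.}

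First, the step you flag as the main obstacle is routine. The inclusion of $\Lambda_A$ in the set of characters trivial on $\ker\phi$ is immediate from $X^{a+b}-1=X^a(X^b-1)+(X^a-1)$. For the reverse inclusion you only need to transport the diagonal computation: under $A\mapsto UAV$ with $U,V\in\GL_n(\mathbb{Z})$, both $\Lambda_A$ and that set get multiplied on the right by $V$. Say this explicitly.

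Second, you do not need flatness to work over $K$. Equality of the kernels over $B$ trivially implies equality of their generic fibres, and that is all Step~1 uses. Working over $K$ is also the right way to handle a projective base curve $B$, where $\mathcal{O}(B)=F$ and $\ker\phi$ is not cut out by an ideal of $\mathcal{O}(B)[X^{\pm1}]$.
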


\begin{proof}
If $\ker \phi = \ker \psi$, then, in particular, the kernels of the corresponding endomorphisms of $\mathbb{G}^n_{m,K}$, which we also denote by $\phi$ and $\psi$ by abuse of notation, coincide.

Let $M$ and $M'$ denote the matrices associated to $\phi$ and $\psi$ respectively and let $r_M$ and $r_{M'}$ denote their respective ranks. Using \cite[Proposition 3.2.10 and Corollary 3.2.15]{BombieriGubler} and the connectedness of $\phi(\mathbb{G}^n_{m,K})$, we can find an automorphism $\alpha$ of $\mathbb{G}^n_{m,K}$ such that $(\alpha \circ \phi)(\mathbb{G}^n_{m,K}) = \mathbb{G}^{r_M}_{m,K} \times_K \{1\}^{n-r_M}$. We can similarly find an automorphism $\beta$ of $\mathbb{G}^n_{m,K}$ such that $(\beta \circ \psi)(\mathbb{G}^n_{m,K}) = \mathbb{G}^{r_{M'}}_{m,K} \times_K \{1\}^{n-r_{M'}}$. Since $\phi$ and $\psi$ have the same kernel, we must have $r_M = r_{M'}$.

The endomorphisms $\alpha \circ \phi$ and $\beta \circ \psi$ induce surjective homomorphisms $\widetilde{\phi}, \widetilde{\psi}: \mathbb{G}^n_{m,K} \to \mathbb{G}^{r_M}_{m,K}$, which both have the same kernel by our assumption. By \cite[Theorem 5.13]{MilneAG}, there is an automorphism $\widetilde{\gamma}$ of $\mathbb{G}^{r_M}_{m,K}$ such that $\widetilde{\gamma} \circ \widetilde{\phi} = \widetilde{\psi}$. Setting $\widetilde{\gamma} = \gamma \times_K \id_{\mathbb{G}^{n-r_M}_{m,K}}$, we have that $\gamma \circ \alpha \circ \phi = \beta \circ \psi$.

All in all, we have shown that there exists an automorphism $\chi = \beta^{-1} \circ \gamma \circ \alpha$ of $\mathbb{G}^n_{m,K}$ such that $\chi \circ \phi = \psi$. Any such $\chi$ extends to an automorphism of $\mathbb{G}^n_{m,B}$ with the desired property. On the other hand, the existence of such a $\chi$ clearly implies that $\ker \phi = \ker \psi$.
\end{proof}

Using the previous results, we can now relate the complexity of a subgroup scheme, defined by one single endomorphism, with the mass of the associated matrix as introduced in Section \ref{subsec:generalities}. The assumption that the subgroup scheme is defined by one endomorphism is not a big restriction because of Lemma \ref{lem:onlyonegenerator}.

\begin{cor}\label{cor:complexityingm}
Let $\mathcal{H} \subseteq \mathbb{G}^n_{m,B}$ be a flat subgroup scheme. Let $\phi$ and $\psi$ be endomorphisms of $\mathbb{G}^n_{m,B}$, induced by matrices $M, A \in \Mat_{n \times n}(\mathbb{Z})$ respectively, such that $\mathcal{H} = \ker \phi = \ker \psi$. Then $m(M) = m(A)$ and
  \[\max\{2^{n-2},1\}^{-1}m(M) \leq \mathcal{C}(\mathcal{H}) \leq m(M).
\]
\end{cor}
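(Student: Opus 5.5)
The plan has three steps: show that the mass is an invariant of $\mathcal{H}$, bound $\mathcal{C}(\mathcal{H})$ from above by one admissible choice of defining endomorphism, and bound it from below using Lemma \ref{lem:onlyonegenerator}.

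\emph{Invariance of the mass.} By Lemma \ref{lem:comparetwogenerators}, $\ker\phi=\ker\psi$ gives an automorphism $\chi$ of $\mathbb{G}^n_{m,B}$ with $\chi\circ\phi=\psi$. By Lemma \ref{lem:endomorphisms}, automorphisms of $\mathbb{G}^n_{m,K}$ correspond to $\GL_n(\mathbb{Z})$, and composition corresponds to matrix multiplication. Since $\chi$ is applied after $\phi$, this means $A=UM$ with $U\in\GL_n(\mathbb{Z})$ the matrix of $\chi$; I will check this convention against the explicit formula in Lemma \ref{lem:endomorphisms}. Then $\{SA : S\in \GL_n(\mathbb{Z})\}=\{SM : S\in\GL_n(\mathbb{Z})\}$, so the two minima defining the mass coincide and $m(M)=m(A)$.

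\emph{Upper bound.} Choose $S\in\GL_n(\mathbb{Z})$ with $\lVert SM\rVert_\infty=m(M)$. The endomorphism induced by $SM$ is an automorphism composed with $\phi$, so its kernel is still $\mathcal{H}$. It is therefore an admissible one-element family in Definition \ref{defn:complexity}, and $\mathcal{C}(\mathcal{H})\le m(M)$.

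\emph{Lower bound.} Take a family $\widetilde\Phi_1,\dots,\widetilde\Phi_{\widetilde s}$ realizing the minimum in Definition \ref{defn:complexity}, so that $\max_i\mathcal{C}((\widetilde\Phi_i)_K)=\mathcal{C}(\mathcal{H})$. Lemma \ref{lem:onlyonegenerator} gives a single endomorphism with matrix $M'$ such that $\ker=\mathcal{H}$ and $\lVert M'\rVert_\infty\le\max\{2^{n-2},1\}\,\mathcal{C}(\mathcal{H})$. By the invariance step, $m(M)=m(M')\le\lVert M'\rVert_\infty$, which gives $\max\{2^{n-2},1\}^{-1}m(M)\le\mathcal{C}(\mathcal{H})$.

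The only delicate points are bookkeeping. One is the direction of matrix multiplication, which must be checked so that the group acting is left multiplication, matching the definition of the mass. The other is that automorphisms of $\mathbb{G}^n_{m,K}$ extend to $\mathbb{G}^n_{m,B}$; this is already noted in the proof of Lemma \ref{lem:comparetwogenerators}.
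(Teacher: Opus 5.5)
Your proposal is correct and follows the paper's proof exactly. The invariance of the mass comes from Lemma \ref{lem:comparetwogenerators}, and your convention $A = UM$ with $U \in \GL_n(\mathbb{Z})$ is the right one. The upper bound comes directly from Definition \ref{defn:complexity}, and the lower bound from Lemma \ref{lem:onlyonegenerator} combined with the invariance; you simply spell out details the paper leaves implicit.
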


\begin{proof}
We have $m(M) = m(A)$ by Lemma \ref{lem:comparetwogenerators}. The upper bound for $\mathcal{C}(\mathcal{H})$ follows directly from the definition of the complexity while the lower bound follows from Lemma \ref{lem:onlyonegenerator} and the first part of the corollary.
\end{proof}

The arithmetic unlikely intersections that we will consider later have been studied most extensively in the case where one intersects with the kernel of multiplication by $N$ for some $N \in \mathbb{Z}$. It is therefore worthwhile to compute the complexity of this kernel and we do so now.

\begin{lem}\label{lem:complexitymultbyN}
Let $n \in \mathbb{N}$. Let $\mathcal{H} \subseteq \mathbb{G}^n_{m,B}$ denote the kernel of multiplication by $N \in \mathbb{Z}$. Then $\mathcal{C}(\mathcal{H}) = |N|$.
\end{lem}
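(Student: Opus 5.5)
The approach is to prove the two inequalities $\mathcal{C}(\mathcal{H}) \leq |N|$ and $\mathcal{C}(\mathcal{H}) \geq |N|$ separately. Throughout, $\mathcal{H} = \ker [N]$, and $[N]$ corresponds under Lemma \ref{lem:endomorphisms} to the matrix $N I_n$.

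\emph{Upper bound.} Here $\mathcal{H}$ is by definition the kernel of the single endomorphism $[N]$. Its complexity is $\lVert N I_n \rVert_\infty = |N|$, so Definition \ref{defn:complexity} gives $\mathcal{C}(\mathcal{H}) \leq |N|$ immediately.

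\emph{Lower bound, degenerate case.} If $N = 0$ the claim is $\mathcal{C}(\mathcal{H}) \geq 0$, which is trivial. Assume from now on that $N \neq 0$.

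\emph{Lower bound, main case.} Let $\widetilde{\Phi}_1,\hdots,\widetilde{\Phi}_{\widetilde{s}}$ be any finite family with $\ker(\widetilde{\Phi}_1,\hdots,\widetilde{\Phi}_{\widetilde{s}}) = \mathcal{H}$. Let $\Lambda \subseteq \mathbb{Z}^n$ be the lattice generated by all rows of the corresponding matrices. I will show that $\Lambda = N\mathbb{Z}^n$. This suffices: every nonzero vector of $N\mathbb{Z}^n$ has some entry of absolute value at least $|N|$, and some row of some $\widetilde{\Phi}_i$ is nonzero because $\Lambda \neq 0$. Hence $\max_i \mathcal{C}((\widetilde{\Phi}_i)_K) \geq |N|$.

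\emph{Identifying $\Lambda$.} The key step is to identify $\Lambda$ from the kernel, and I will do this on the generic fiber over $\overline{K}$. Since $\mathrm{char}\, K = 0$, the kernel of a family of characters $x \mapsto x^{\lambda}$ is the diagonalizable group whose character group is $\mathbb{Z}^n/\Lambda$. Concretely, reducing $\Lambda$ to Smith normal form as in the proof of Proposition \ref{prop:complexityflat} shows that the $\overline{K}$-points of the kernel form the group $\{x : x^\lambda = 1 \ \forall \lambda \in \Lambda\}$. Under the duality between subgroups of $\mathbb{Z}^n$ and subgroups of $\overline{K}^{*n}$, the annihilator of this group is exactly $\Lambda$. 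Alternatively, one can cite \cite[Proposition 3.2.10 / Corollary 3.2.15]{BombieriGubler}, which identifies algebraic subgroups with lattices.

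\emph{Applying it to $\mathcal{H}$.} The generic fiber of $\mathcal{H}$ is $\mu_N^n$. The characters vanishing on $\mu_N^n(\overline{K})$ are exactly those $\lambda \in N\mathbb{Z}^n$, because $\mu_N(\overline{K})$ is cyclic of order $|N|$. Therefore $\Lambda = N\mathbb{Z}^n$.

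\emph{Main obstacle.} The only step with real content is the identification of $\Lambda$ with the annihilator of the kernel. I expect this to follow cleanly from the Smith normal form description already used in the proof of Proposition \ref{prop:complexityflat}. In that description, the $\overline{K}$-points of $\ker$ correspond to $\prod \mu_{d_i} \times \mathbb{G}_m^{n-k}$ after an automorphism, and the annihilator is then visibly $\bigoplus d_i\mathbb{Z}$, which is the image of $\Lambda$ under that automorphism.
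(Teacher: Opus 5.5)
Your proof is correct. The upper bound and the $N=0$ case match the paper. For the lower bound you take a different route to the same underlying fact.

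The paper argues with group schemes on the generic fiber. Since $\ker[N]\subseteq\ker\phi_i$, the factorization theorem \cite[Theorem 5.13]{MilneAG} gives $\phi_i=\chi_i\circ[N]$, so each defining matrix is $N$ times an integer matrix. Some $\phi_i\neq 0$, hence $\mathcal{C}(\phi_i)=|N|\,\mathcal{C}(\chi_i)\geq|N|$.

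You instead work with $\overline{K}$-points and character duality. This needs only that $\mu_N(\overline{K})$ is cyclic of order $|N|$, because $\mathrm{char}\,K=0$. It is more elementary and self-contained than citing the quotient theorem, at the cost of the double-annihilator computation via Smith normal form.

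That computation, which you flag as the main obstacle, is not actually needed. Your argument only uses $\Lambda\subseteq N\mathbb{Z}^n$ and $\Lambda\neq 0$, and both are immediate:
\begin{itemize}
\item For $\Lambda\subseteq N\mathbb{Z}^n$, evaluate each row $\lambda$ at the point of $\mathcal{H}(\overline{K})=\mu_N(\overline{K})^n$ with a primitive $N$-th root of unity in the $j$-th coordinate and $1$ elsewhere. This gives $N\mid\lambda_j$.
\item For $\Lambda\neq 0$, note that $\Lambda=0$ would make the kernel all of $\mathbb{G}^n_{m,B}$. Its set of $\overline{K}$-points is infinite, whereas $\mu_N(\overline{K})^n$ is finite.
\end{itemize}
This divisibility of every entry by $N$ is exactly the matrix-level content of the paper's factorization $\phi_i=\chi_i\circ[N]$.
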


\begin{proof}
It is clear that $\mathcal{C}(\mathcal{H}) \leq |N|$ since $\mathcal{H}$ is defined by the monomial equations $X_1^N = \cdots = X_n^N = 1$. If $N = 0$, this already proves the lemma.

To show the inequality in the other direction if $N \neq 0$, suppose that $\phi_1, \hdots, \phi_m$ are endomorphisms of $\mathbb{G}^n_{m,B}$ such that $\mathcal{H} = \ker(\phi_1,\hdots,\phi_m)$. If we denote the corresponding endomorphisms of $\mathbb{G}^n_{m,K}$ also by $\phi_1, \hdots, \phi_m$ and if $[N]$ denotes the multiplication by $N$ on $\mathbb{G}^n_{m,K}$, we get that $\ker [N] \subseteq \ker \phi_i$ for all $i$. By \cite[Theorem 5.13]{MilneAG}, there are endomorphisms $\chi_i$ of $\mathbb{G}^n_{m,K}$ such that $\phi_i = \chi_i \circ [N]$ ($i = 1,\hdots,n$).

Since $N \neq 0$, there exists some $i$ such that $\phi_i \neq 0$ and so $\chi_i \neq 0$. Then $\mathcal{C}(\phi_i) = |N|\mathcal{C}(\chi_i) \geq |N|$ and we are done.
\end{proof}

\section{Bounding the size of arithmetic likely intersections}\label{sec:trivialbound}

Let $K$ be a number field. In this section, we study the ``size" of a possibly \emph{likely} intersection between a fixed integral closed subscheme of $\mathbb{G}^n_{m,\mathcal{O}_K}$ that dominates $\Spec \mathcal{O}_K$ and a varying flat subgroup scheme. The aim is to put this ``size" into relation with the complexity of the subgroup scheme that we are intersecting with as defined in Section \ref{sec:complexity}. This is partially achieved in Theorem \ref{thm:LIB} and gives us some idea of what we should expect in the case of an \emph{unlikely} intersection.

For example, the intersection between the Zariski closure in $\mathbb{G}_{m,\mathbb{Z}}$ of $\{a\} \subseteq \mathbb{G}_{m,\mathbb{Q}}(\mathbb{Q})$, $a \in \mathbb{Z}\backslash\{0\}$, and the kernel of multiplication by $N \in \mathbb{N}$ in $\mathbb{G}_{m,\mathbb{Z}}$ is isomorphic to $\Spec \mathbb{Z}/(a^N-1)\mathbb{Z}$. If $a \neq \pm 1$, then the ring $\mathbb{Z}/(a^N-1)\mathbb{Z}$ is finite for all $N \in \mathbb{N}$ and the logarithm of its cardinality, which measures the ``size" of the intersection, is bounded from above by $(2\log|a|)N$. This is the product of $N$, which is equal to the complexity of the kernel of multiplication by $N$ by Lemma \ref{lem:complexitymultbyN}, and a constant depending on $a$, but not on $N$.

We next consider the more general setting of the intersection of a fixed integral closed subscheme $\mathcal{V} \subseteq \mathbb{G}^n_{m,\mathcal{O}_K}$ that dominates $\Spec \mathcal{O}_K$ with a varying flat subgroup scheme $\mathcal{H} \subseteq \mathbb{G}^n_{m,\mathcal{O}_K}$ such that $\dim \mathcal{V} + \dim \mathcal{H} \leq n+1$. Inspired by the example above, we shall look for an upper bound of the form $C\mathcal{C}(\mathcal{H})^{\dim \mathcal{V}}$ for the ``size" of this intersection. Here, the ``size" is the sum of the degrees (with respect to some fixed projective immersion) of all irreducible components of $\mathcal{V} \cap \mathcal{H}$ that do \emph{not} dominate $\Spec \mathcal{O}_K$, weighted by the multiplicity of the component and the size of the residue field.

Since the intersection $\mathcal{V} \cap \mathcal{H}$ is allowed to be likely here, we refer to this bound, which we state in full generality in Conjecture \ref{conj:LIB}, as the \textit{Likely Intersection Bound} (LIB). If $\mathcal{V} \cap \mathcal{H}$ is finite, then its ``size" is just the logarithm of the cardinality of $\mathcal{O}_{\mathcal{V} \cap \mathcal{H}}(\mathcal{V} \cap \mathcal{H})$ (see Remark \ref{rmk:lhszerodim} below), so in particular Conjecture \ref{conj:LIB} would yield the bound appearing in the example above with some unspecified constant $C$ in place of $2\log|a|$.

We do not know how to prove LIB in general. However, in Theorem \ref{thm:LIB}, we prove LIB ``without multiplicities", which in the example above amounts to replacing $a^N-1$ by the product of all primes that divide it. A more complicated example can be found in Section \ref{subsec:trivialboundex}. Furthermore, the methods we use to prove Theorem \ref{thm:LIB} allow us to improve one of the main results of
\cite{BCMOS_Preprint}.

The rest of this section is structured as follows: in Section \ref{subsec:trivialboundex}, we present an example in $\mathbb{G}^2_{m,\mathbb{Z}}$, where LIB (with multiplicities) can be proved directly. In Section \ref{subsec:prooftrivialboundnomult}, we state Theorem \ref{thm:LIB} and the general form of LIB as Conjecture \ref{conj:LIB}. Then, in Section \ref{subsec:arithmeticintersectiontheory}, we recall several important definitions and results from arithmetic intersection theory that we will use. Finally, in Section \ref{subsec:proof_LIB}, we prove Theorem \ref{thm:LIB}.

In the case of certain unlikely intersections, one can prove a better upper bound than LIB, which we call the \textit{Unlikely Intersection Bound} (ULIB). This will be the object of Section \ref{sec:mainconj}.

\subsection{Introducing the Likely Intersection Bound: an example}\label{subsec:trivialboundex}

Let $P(T)$ be a polynomial in $\mathbb{Z}[T]$, not equal to $\pm T^M$ for any $M \in \mathbb{N} \cup \{0\}$. We consider the closed subscheme $\mathcal{V} \subseteq \mathbb{G}^2_{m,\mathbb{Z}} = \Spec \mathbb{Z}[X,X^{-1},Y,Y^{-1}]$ defined by $Y = P(X)$, which we will intersect with the kernel of multiplication by $N \in \mathbb{N}$. Since $\dim \mathcal{V} = 2$ and the kernel of multiplication by $N$ has codimension $2$ in $\mathbb{G}^2_{m,\mathbb{Z}}$, this is a likely intersection. We have:
\begin{align*}
\mathcal{V} \cap \ker [N] &= \Spec \mathbb{Z}[X,X^{-1},Y,Y^{-1}]/(X^N-1,Y^N-1,Y-P(X)) \\
&\simeq \Spec \mathbb{Z}[T]/(T^N-1,P(T)^N-1).
\end{align*}

This intersection can have irreducible components that dominate $\Spec \mathbb{Z}$. While we will be able to control positive-dimensional irreducible components contained in some fiber of positive characteristic via their degree, we will always exclude irreducible components that dominate the base from our considerations. To get rid of them, we set 
\[ Q_N(T) = \prod_{\stackrel{d \mid N}{P(\zeta_d)^N = 1}}{\Psi_d(T)},\]
where $\zeta_d \in \mathbb{C}$ denotes a primitive $d$-th root of unity and we recall that $\Psi_d$ denotes the $d$-th cyclotomic polynomial. The intersection without the irreducible components that dominate $\Spec \mathbb{Z}$ is then isomorphic to
\[ \Spec \mathbb{Z}[T,Q_N(T)^{-1}]/(T^N-1,P(T)^N-1).\]
The following proposition therefore establishes LIB with multiplicities for $\mathcal{V} \cap \ker [N]$.

\begin{prop}\label{prop:LIBexample}
    There exists a constant $c > 0$ such that
    \begin{equation}\label{eq:likely_bound_example}
    \log |\mathbb{Z}[T,Q_N(T)^{-1}]/(T^N-1,P(T)^N-1)| \leq cN^2
    \end{equation}
    for all $N \in \mathbb{N}$.
\end{prop}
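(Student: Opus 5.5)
The plan is to reduce the ring in question to a quotient of a ring of the form $\mathbb{Z}[T]/(R(T),S(T))$ with $R$ monic, whose cardinality is the absolute value of a resultant, and then to bound that resultant trivially.

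First, I will get rid of the localization. Since every $\Psi_d$ with $d\mid N$ divides $T^N-1$ exactly once, $Q_N$ divides $T^N-1$ in $\mathbb{Z}[T]$. Set $R_N(T) = (T^N-1)/Q_N(T) = \prod_{d\mid N,\ P(\zeta_d)^N\neq 1}\Psi_d(T)$, which is monic. In the ring $A := \mathbb{Z}[T,Q_N(T)^{-1}]/(T^N-1,P(T)^N-1)$ we have $Q_N R_N = 0$ with $Q_N$ a unit, so $R_N = 0$ in $A$. Hence $A$ is a quotient of $\mathbb{Z}[T,Q_N^{-1}]/(R_N,P^N-1)$, which is the localization of $B := \mathbb{Z}[T]/(R_N(T),P(T)^N-1)$ at the image of $Q_N$.

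Next, I will show that $B$ is finite with $|B| = |\mathrm{Res}(R_N,P^N-1)|$. Because $R_N$ is monic, $\mathbb{Z}[T]/(R_N)$ is free of rank $\deg R_N$ over $\mathbb{Z}$. Then $B$ is its cokernel under multiplication by $P^N-1$, whose determinant is $\pm\mathrm{Res}(R_N,P^N-1) = \pm\prod_{R_N(\zeta)=0}(P(\zeta)^N-1)$. This product is non-zero by the very choice of $Q_N$: the roots of $R_N$ are exactly the $\zeta_d$ with $P(\zeta_d)^N \neq 1$, and $P(\zeta)^N$ depends only on $d$ by Galois conjugation. If $B$ is finite, it is a product of finite local rings, and its localization at an element is the product of those factors in which the element is a unit. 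The localization is therefore a quotient of $B$, so $|A|\le |B|$. If $B$ is the zero ring (e.g.\ $R_N=1$), then $A=0$ and there is nothing to prove.

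Finally, I will bound the resultant trivially. Let $H$ be the sum of the absolute values of the coefficients of $P$. For each root of unity $\zeta$ we have $|P(\zeta)|\le H$, so $|P(\zeta)^N-1|\le H^N+1\le (H+1)^N$. Since $\deg R_N\le N$, this gives
\[
\log|A| \le \log|B| \le N\cdot N\log(H+1),
\]
and we may take $c=\log(H+1)+1$. (If $P=0$, then $P^N-1=-1$ and $A=0$.)

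The only point needing care is the passage through the localization. Inverting $Q_N$ does not simply discard cyclotomic factors, because distinct $\Psi_d$ need not be coprime modulo primes dividing $N$. This is why I will argue via $R_N=0$ in $A$, together with the fact that localizations of finite rings are quotients, rather than trying to identify $A$ with $\mathbb{Z}[T]/(R_N,P^N-1)$ directly.
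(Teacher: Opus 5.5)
Your argument is correct, and it takes a different, more economical route than the paper.

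\textbf{The paper's route.} The paper never passes to the monic factor $R_N$. It builds the map $\rho\colon\prod_{d\in\Sigma}\mathbb{Z}[T,Q_N^{-1}]/(\Psi_d,P^N-1)\to A$ and bounds the order of the domain by $\prod_{d\in\Sigma}|\mathrm{Nm}_{\mathbb{Q}(\zeta_d)/\mathbb{Q}}(P(\zeta_d)^N-1)|\le C^{N^2}$. It then shows that the index of the image divides $|\mathrm{disc}(T^N-1)|=N^N$, by composing with the injection of $\mathbb{Z}[T,Q_N^{-1}]/(T^N-1)$ into $\prod_{d\in\Sigma}\mathbb{Z}[\zeta_d,Q_N(\zeta_d)^{-1}]$. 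Together these give $|A|\le N^NC^{N^2}$.

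\textbf{Your route.} You use that $T^N-1$ and $R_N$ are associates in $\mathbb{Z}[T,Q_N^{-1}]$. So your ``quotient'' is in fact an isomorphism $A\simeq B[Q_N^{-1}]$. You then need only two elementary facts: $\mathbb{Z}[T]/(R_N)$ is free because $R_N$ is monic, and a localization of a finite ring is one of its quotients. This gives
$|A|\le|\mathrm{Res}(R_N,P^N-1)|=\prod_{d\in\Sigma}|\mathrm{Nm}_{\mathbb{Q}(\zeta_d)/\mathbb{Q}}(P(\zeta_d)^N-1)|$.
That is exactly the quantity the paper uses to bound the domain of $\rho$, but without the extra factor $N^N$. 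When $Q_N=1$ it reduces to the resultant identity the paper later invokes in Proposition~\ref{prop:Lfunction}.

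\textbf{What each approach buys.} The paper's decomposition describes $A$, up to a cokernel of order dividing $N^N$, in terms of the individual rings $\mathbb{Z}[\zeta_d,Q_N(\zeta_d)^{-1}]/(P(\zeta_d)^N-1)$ attached to the cyclotomic components. That is closer to the geometric picture of $\mathcal{V}\cap\ker[N]$. For the $O(N^2)$ bound itself, your shortcut suffices and is shorter.

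One minor wording point: it is the condition $P(\zeta)^N=1$, not the value $P(\zeta)^N$, that depends only on the order $d$ of $\zeta$. That is all your non-vanishing argument needs.
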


\begin{proof}
Let $N \in \mathbb{N}$ and set $Q(T) = Q_N(T)$. Moreover, we set $\Sigma = \Sigma_N = \{d \in \mathbb{N};\,d \mid N, Q(\zeta_d) \neq 0\}$.

We define a homomorphism of abelian groups
\[\rho: \prod_{d \in \Sigma}{\mathbb{Z}[T,Q(T)^{-1}]/(\Psi_d(T),P(T)^N-1)} \to \mathbb{Z}[T,Q(T)^{-1}]/(T^N-1,P(T)^N-1)\]
by
\[
    \rho:(A_d)_{d \in \Sigma} \mapsto \sum_{d \in \Sigma}{A_d\prod_{\stackrel{j \mid N}{j \neq d}}{\Psi_j}}.
\]
To prove the proposition, we bound first the size of the domain of $\rho$ and then the index of its image inside the codomain.

The domain of $\rho$ is isomorphic to
\[\prod_{d \in \Sigma}{\mathbb{Z}[\zeta_d,Q(\zeta_d)^{-1}]/(P(\zeta_d)^N-1)}.\]
 Note that $\mathbb{Z}[\zeta_d,Q(\zeta_d)^{-1}]$ is equal to the ring of $S$-integers inside $\mathbb{Q}(\zeta_d)$, where $S$ is the finite set of all finite places of $\mathbb{Q}(\zeta_d)$ that divide $Q(\zeta_d)$. The maximal ideals in the ring of $S$-integers are in a canonical bijection with the maximal ideals in the ring of integers that do not belong to $S$ and this bijection preserves the ideal norm. The size of the domain of $\rho$ is therefore bounded from above by
\[\prod_{d \in \Sigma}{|\mathrm{Nm}_{\mathbb{Q}(\zeta_d)/\mathbb{Q}}(P(\zeta_d)^N-1)|} \leq \prod_{d \mid N}{H(P(\zeta_d)^N-1)^{\phi(d)}} \leq C^{N\sum_{d \mid N}{\phi(d)}} = C^{N^2},\]
where $H$ denotes the absolute multiplicative Weil height and $C$ is a constant that depends only on $P$.

In order to bound the index of the image of $\rho$, we consider the sequence of homomorphisms of abelian groups
\[
\prod_{d \in \Sigma} \mathbb{Z}[T,Q(T)^{-1}]/(\Psi_d(T)) \xrightarrow{\widetilde{\rho}} \mathbb{Z}[T,Q(T)^{-1}]/(T^N-1) \xhookrightarrow{\sigma} \prod_{d\in \Sigma} \mathbb{Z}[T,Q(T)^{-1}]/(\Psi_d(T)),
\]
where
\[
    \widetilde{\rho}:(A_d)_{d \in \Sigma} \mapsto \sum_{d \in \Sigma}{A_d\prod_{\stackrel{j \mid N}{j \neq d}}{\Psi_j}}
\]
induces $\rho$ and $\sigma$ is induced by the natural projections. Note that $\sigma$ is injective and that the index of the image of $\rho$ inside its codomain divides the index of the image of $\widetilde{\rho}$ inside its codomain.

The composition $\sigma \circ \widetilde{\rho}$ corresponds to multiplication by $\prod_{\stackrel{j \mid N}{j \neq d}}{\Psi_{j}(\zeta_d)}$ on the factor $\mathbb{Z}[\zeta_d,Q(\zeta_d)^{-1}]$. Thus, the index of the image of $\sigma \circ \widetilde{\rho}$ inside its codomain divides
\[ \textstyle{\prod_{d \in \Sigma}\prod_{\stackrel{j \mid N}{j \neq d}}{|\mathrm{Nm}_{\mathbb{Q}(\zeta_d)/\mathbb{Q}}(\Psi_{j}(\zeta_d))|}}, \]
which in turn divides $\prod_{i=1}^{N}\prod_{\stackrel{j=1}{j \neq i}}^{N}{|\zeta_N^i - \zeta_N^j|} = |\mathrm{disc}(T^N-1)| = N^N$.

As $\sigma$ is injective, we deduce that the index of the image of $\widetilde{\rho}$ inside its codomain also divides $N^{N}$. Hence, the same holds for the index of the image of $\rho$ inside its codomain. Putting everything together, we find that the cardinality of the ring $\mathbb{Z}[T,Q(T)^{-1}]/(T^N-1,P(T)^N-1)$ is bounded from above by $N^NC^{N^2}$ and we are done.

\end{proof}

\begin{rmk}
Let $\mathcal{D}$ denote the set of $d \in \mathbb{N}$ such that the $d$-th cyclotomic polynomial $\Psi_d$ divides $P(T)^N-1$ for some $N \in \mathbb{N}$. As $P(T) \neq \pm T^M$, the curve parametrized by $(T,P(T))$ is not a component of an algebraic subgroup of $\mathbb{G}^2_{m,\mathbb{Q}}$. Therefore, the set $\mathcal{D}$ is finite by the theorem of Ihara-Serre-Tate \cite{Lang_1965}. The polynomial $Q_N(T)$ always divides $\prod_{d \in \mathcal{D}}{\Psi_d(T)}$, no matter what $N$ is.
\end{rmk}

In Proposition \ref{prop:LIBexample}, $N$ is the complexity of $\ker [N]$ by Lemma \ref{lem:complexitymultbyN}. Note that the exponent $2$ in the right-hand side of \eqref{eq:likely_bound_example} coincides with $\dim \mathcal{V}$.

We will now show that the order of magnitude of the bound \eqref{eq:likely_bound_example} cannot be improved if $P(T) = T+1$. In this special case, we can even give an asymptotic for the left-hand side of \eqref{eq:likely_bound_example} as long as $6 \nmid N$, which implies that $Q_N(T) = 1$ (intersect two circles with radius $1$ around $0$ and $1$ in the complex plane).

\begin{prop} \label{prop:Lfunction}
There exists $\kappa > 0$ such that
\[
    \log \left |\mathbb{Z}[T]/(T^N-1,(T+1)^N-1) \right| = \frac{3\sqrt{3}}{4\pi}L(2,\chi_3) N^2 + \mathcal{O}(N^{2-\kappa}),
\]
where $N \in \mathbb{N}$ such that $6 \nmid N$ and $\chi_3$ denotes the non-trivial character modulo $3$.
\end{prop}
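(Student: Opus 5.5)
Since $6 \nmid N$, we have $Q_N(T)=1$: $\zeta$ and $\zeta+1$ can both be roots of unity only if $\zeta$ is a primitive sixth root of unity, which is excluded. So the ring $\mathbb{Z}[T]/(T^N-1,(T+1)^N-1)$ is finite. I would compute its size through resultants. Its cardinality is $|\mathrm{Res}(T^N-1,(T+1)^N-1)|$, which makes sense because the two polynomials are coprime over $\mathbb{Q}$ and $T^N-1$ is monic. Indeed, for a monic $f$ and any $g$ with no common root, $|\mathbb{Z}[T]/(f,g)| = |\det(\text{multiplication by } g \text{ on } \mathbb{Z}[T]/(f))| = |\mathrm{Res}(f,g)|$. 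This yields
\[
\log\left|\mathbb{Z}[T]/(T^N-1,(T+1)^N-1)\right| = \sum_{\zeta^N=1}\log|(\zeta+1)^N-1| .
\]
Unlike the proof of Proposition \ref{prop:LIBexample}, no index correction is needed here, because the resultant computes the whole ring directly.

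Next, I would turn the sum over $\zeta$ into a double sum. Factoring $(\zeta+1)^N-1=\prod_{\xi^N=1}(\zeta+1-\xi)$ gives
\[
\log\left|\mathrm{Res}\right| = \sum_{\zeta^N=1}\sum_{\xi^N=1}\log|1+\zeta-\xi| .
\]
This is a Riemann-sum discretisation of the integral
\[
N^2\int_0^1\int_0^1\log|1+e^{2\pi i s}-e^{2\pi i t}|\,ds\,dt = N^2\, m(1+x-y),
\]
where $m$ denotes the Mahler measure. By Smyth's theorem, $m(1+x-y)=m(1+x+y)=\frac{3\sqrt3}{4\pi}L(2,\chi_3)$, which is exactly the main term claimed.

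The main obstacle is to bound the error $\mathcal{O}(N^{2-\kappa})$. The integrand has logarithmic singularities along the torus curve where $1+x-y=0$, namely at the two points $x=e^{\pm 2\pi i/3}$, $y=e^{\pm\pi i/3}$. These are sixth and third roots of unity, so the hypothesis $6\nmid N$ ensures the grid avoids them exactly, but the grid can still come close. This is precisely the setting of Dimitrov--Habegger \cite{Dimitrov_Habegger_2019}, who prove that for a polynomial $P$ that does not vanish at torsion grid points, $\frac1{N^2}\sum_{\zeta,\xi\in\mu_N}\log|P(\zeta,\xi)| = m(P)+\mathcal{O}(N^{-\kappa})$ for some $\kappa>0$ (with $N$ restricted so that no grid point is a zero).

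Concretely, I would apply their result to $P=1+x-y$, after checking that its hypotheses hold: $P$ is essentially atoral, or more precisely its zeros on the torus are finitely many and transversal. If one wanted to avoid citing it, I would proceed by hand. First, fix $\zeta$; the inner sum over $\xi$ is exactly $\log|(1+\zeta)^N-1|$, so one only needs to compare $\frac1N\sum_\zeta \log|(1+\zeta)^N-1|$ with $\frac1N\sum_{\zeta} N\log^+|1+\zeta|$ via Jensen. The discrepancy comes only from $\zeta$ with $|1+\zeta|$ close to $1$, that is, near the two bad points. In that region one uses a Baker-type lower bound (or the explicit structure near a root of unity) for $|(1+\zeta)^N-1|$ to show these terms contribute $\mathcal{O}(N^{2-\kappa})$. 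The remaining Riemann sum of $N\log^+|1+e^{2\pi i s}|$ converges with power savings, since this function is Lipschitz away from the two points and has only a kink there.
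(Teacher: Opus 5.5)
Your proposal is correct and follows the paper's proof essentially step for step: the resultant identity (which the paper takes from Frenkel--Z\'abr\'adi and you justify directly), the double sum over pairs of $N$-th roots of unity, Dimitrov--Habegger's Theorem~1.2 applied to the essentially atoral polynomial $1+x-y$, and Smyth's evaluation of $m(1+x+y)$. One small slip in your first paragraph: a common root would be a primitive \emph{third} root of unity $\zeta$ with $\zeta+1$ a primitive sixth root of unity, as you correctly say later, and this is exactly why $6\nmid N$ rules it out.
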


We have
\[\frac{3\sqrt{3}}{4\pi}L(2,\chi_3) = 0.3230659472194505140936\hdots > 0,\]
 see \cite{Habegger_2009}.

\begin{proof}
   Let $N \in \mathbb{N}$ such that $6 \nmid N$. This implies that $T^N-1$ and $(T+1)^N-1$ do not have any common zeroes in $\mathbb{C}$. It follows from \cite[Lemma 1]{Frenkel_Zabradi_2018} that
\begin{equation}\label{eq:atoral}
\log |\mathbb{Z}[T]/(T^N-1,(T+1)^N-1)| = \log |\mathrm{res}(T^N-1, (T+1)^N-1)| = \sum_{j = 0}^{N-1}\sum_{k=0}^{N-1}{\log |\zeta_N^j+1-\zeta_N^k|},
\end{equation}
where $\mathrm{res}$ denotes the resultant.

We now apply \cite[Theorem 1.2]{Dimitrov_Habegger_2019} with $G=\{(\zeta_N^i, \zeta_N^j);\, i,j \in \mathbb{Z}\}$ and $P(X,Y)=X-Y+1$, which is essentially atoral as defined in \cite[p. 2]{Dimitrov_Habegger_2019}. It follows that \eqref{eq:atoral} is equal to
\[ m(X-Y+1)N^2 + \mathcal{O}(N^{2-\kappa}) \]
for some $\kappa > 0$, where $m(X- Y+1)$ denotes the logarithmic Mahler measure of the bivariate polynomial $X-Y+1$. Note that
\[ m(X-Y+1) = m(X+Y-1) = \frac{3\sqrt{3}}{4\pi}L(2,\chi_3)\]
by \cite[Example 5]{Smyth_1981}, where $\chi_3$ is the non-trivial character modulo $3$. Putting everything together, we obtain
\[
    \log \left |\mathbb{Z}[T]/(T^N-1,(T+1)^N-1) \right| = \frac{3\sqrt{3}}{4\pi}L(2,\chi_3) N^2 + \mathcal{O}(N^{2-\kappa}),
\]
which proves the proposition.
\end{proof}

\subsection{The Likely Intersection Bound}\label{subsec:prooftrivialboundnomult}

We begin by introducing the setting in which we will work for the rest of this subsection. Let $K$ be a number field, let $\eta$ denote the generic point of $\Spec \mathcal{O}_K$, and let $n \in \mathbb{N}$. If $\mathcal{W} \subseteq \mathbb{G}^n_{m,\mathcal{O}_K}$ is an irreducible closed subscheme that does not dominate $\Spec \mathcal{O}_K$, we set $N(\mathcal{W}) = N(\mathfrak{P})$, where $\mathfrak{P}$ is the maximal ideal of $\mathcal{O}_K$ such that $\mathcal{W} \subseteq \mathbb{G}^n_{m,\mathbb{F}_{\mathfrak{P}}}$. If moreover $\mathcal{W}$ is reduced and $\overline{\mathcal{W}}$ denotes its schematic closure in $\mathbb{P}^{n}_{\mathcal{O}_K}$ (which is reduced), we set $\deg \mathcal{W} = \deg \overline{\mathcal{W}}$, where $\deg$ is the usual projective degree in $\mathbb{P}^{n}_{\mathbb{F}_{\mathfrak{P}}}$.

We can now state LIB ``without multiplicities'':

\begin{thm}\label{thm:LIB}
Let $\mathcal{V} \subseteq \mathbb{G}^n_{m,\mathcal{O}_K}$ be an integral closed subscheme that dominates $\Spec \mathcal{O}_K$. There exists a constant $C = C(n,K,\mathcal{V}) \in \mathbb{R}$ such that for all flat subgroup schemes $\mathcal{H} \subseteq \mathbb{G}^n_{m,\mathcal{O}_K}$ which satisfy $\dim \mathcal{H} + \dim \mathcal{V} \leq n+1$, we have
\begin{equation}\label{eq:trivialbound}
\sum_{\mathcal{Z}}{\deg(\mathcal{Z}_{\red})\log N(\mathcal{Z})} \leq C\mathcal{C}(\mathcal{H})^{\dim \mathcal{V}},
\end{equation}
where the sum on the left-hand side runs over the irreducible components of $\mathcal{V} \cap \mathcal{H}$ which do not dominate $\Spec \mathcal{O}_K$.
\end{thm}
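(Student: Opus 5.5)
We reduce to a single endomorphism and then use the arithmetic Bézout theorem of Bost--Gillet--Soulé \cite{BGS}. By Lemma \ref{lem:onlyonegenerator} and Corollary \ref{cor:complexityingm}, we may write $\mathcal{H} = \ker \phi$ with $\phi$ induced by a matrix $M$ with $\lVert M \rVert_\infty \leq \max\{2^{n-2},1\}\mathcal{C}(\mathcal{H})$. After discarding zero rows and passing to a lattice basis, $\mathcal{H}$ is cut out in $\mathbb{G}^n_{m,\mathcal{O}_K}$ by $r = n+1-\dim\mathcal{H}$ monomial equations $X^{m_i} = 1$, where $r \geq \dim \mathcal{V}$ and $\lVert m_i\rVert_\infty \ll \mathcal{C}(\mathcal{H})$. (Here $r$ equals $n - \dim \mathcal{H}_\eta$, which is $n+1-\dim\mathcal{H}$ by Proposition \ref{prop:complexityflat}.)

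Let $d = \dim \mathcal{V}$. Every component $\mathcal{Z}$ of $\mathcal{V}\cap\mathcal{H}$ is a component of $\mathcal{V}\cap\{X^{m_1}=1\}\cap\cdots\cap\{X^{m_r}=1\}$. The plan is to cut successively with suitable combinations of these equations. After clearing denominators, each equation becomes a hypersurface $\mathcal{D}_i$ of $\mathbb{P}^n_{\mathcal{O}_K}$ given by $f_i = x^{a_i} - x^{b_i}$ with $a_i,b_i$ monomials of degree $\le n\lVert m_i\rVert_\infty$. With respect to the Fubini--Study or $\ell^\infty$ metric, $f_i$ has degree $O(\mathcal{C}(\mathcal{H}))$ and sup-norm at most $2$, so its height term is $O(1)$.

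The intersection is carried out inductively. Starting from $\overline{\mathcal{V}}$, I intersect its cycle with $\mathcal{D}_{i_1}$, keep the resulting cycle, intersect with a further $\mathcal{D}_{i_2}$, and so on. At each stage I choose, among the $r$ equations (or generic integer combinations of them, such as $\sum_j c_j f_j$ after homogenizing degrees with powers of $x_0$), one that does not vanish identically on the components still relevant. Components dominating $\Spec\mathcal{O}_K$ that lie inside $\mathcal{H}$ are simply set aside: once they occur, they are never intersected further. Vertical components contained in $\mathcal{H}$ are retained.

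Arithmetic Bézout \cite{BGS} bounds the height of each proper intersection $Y\cdot\operatorname{div}(f)$ by $\deg(f)\,h(Y) + \deg Y\cdot(\log\lVert f\rVert_{\sup} + c_n\deg f)$. By induction, a cycle of dimension $d-k$ obtained after $k$ cuts has degree $\ll \mathcal{C}^k$ and height $\ll \mathcal{C}^k$, since $h(\overline{\mathcal{V}})$ and $\deg\overline{\mathcal{V}}$ are constants. Components supported in a fiber over $\mathfrak{P}$ contribute $\deg\cdot\log N(\mathfrak{P})$ to the height. Since all height contributions are nonnegative (the Fubini--Study height is nonnegative on effective cycles), a vertical component $\mathcal{Z}$ that appears, with multiplicity at least $1$, at any step contributes at most the total height. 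Moreover, horizontal cycles must be cut at most $d$ times in total. Hence every vertical component of $\mathcal{V}\cap\mathcal{H}$ is reached with a cumulative degree/height budget of $O(\mathcal{C}(\mathcal{H})^{d})$. This is where the hypothesis $\dim\mathcal{V}+\dim\mathcal{H}\le n+1$, that is $r\geq d$, is used: enough equations are available to keep cutting until all horizontal pieces are either discarded or reduced to points, and no more than $d$ factors of $\mathcal{C}$ accumulate.

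The main obstacle is the bookkeeping for vertical components $\mathcal{Z}$ of positive dimension. Such a $\mathcal{Z}$ is not produced as a proper intersection, because once a vertical component lies in $\mathcal{H}$ the later equations vanish on it. I would handle this by the excess-intersection trick: keep $\mathcal{Z}$ as a fixed component of the cycle at the moment it first appears, where it carries weight $\deg(\mathcal{Z}_{\red})\log N(\mathcal{Z})$ in the height, and split off that part of the cycle before continuing. The fiberwise degree-$k$ bound $\deg\ll\mathcal{C}^{k}$, with $k\le d-1$ at that stage, then gives its contribution at most $\mathcal{C}^{d}$. This argument only sees reduced components and loses multiplicities, which is exactly why the theorem is stated without multiplicities. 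A secondary technical point is ensuring that a "generic" combination avoids the finitely many relevant components simultaneously in all fibers; integer coefficients of bounded size suffice, and they change the constants only.
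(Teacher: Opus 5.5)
Your overall mechanism is the paper's. You cut $\overline{\mathcal V}$ repeatedly by homogenized monomial hypersurfaces $x^{a}x_0^{e}-x^{b}x_0^{e'}$, which have degree $O(\mathcal C(\mathcal H))$ and coefficients $\pm1$. The arithmetic Bézout theorem controls height and degree at a cost of a factor $O(\mathcal C(\mathcal H))$ per cut. You stop cutting a piece once it lies in $\mathcal H$, and you conclude from non-negativity of heights and $h=\deg\cdot\log N(\mathfrak P)$ on vertical pieces.

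The step that fails is choosing, at each stage, \emph{one} equation for all pieces still relevant. A single $f_j$ will typically vanish on some pieces and not on others. Nor does a generic integer combination $\sum_j c_jx_0^{e_j}f_j$ with coefficients of bounded size suffice, as you claim. The number of pieces to avoid is not bounded: after $k$ cuts it can be of order $\mathcal C(\mathcal H)^k$. For a vertical piece $Y$ over $\mathfrak P$, the set of residues $\bar c\in\mathbb F_{\mathfrak P}^r$ for which the combination vanishes on $Y$ is a proper linear subspace. When $\mathbb F_{\mathfrak P}$ is small and there are many such pieces, nothing prevents these subspaces from covering all of $\mathbb F_{\mathfrak P}^r$. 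Then no integer (or $\mathcal O_K$-) combination of any size works.

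The repair costs nothing, because all your estimates are additive over pieces. For each integral piece $\mathcal W\not\subseteq\mathcal H$, choose its own monomial $x^{a}-1$ with $|a_i|\le\mathcal C(\mathcal H)$ that vanishes on $\mathcal H$ but not on $\mathcal W$. It exists directly from the definition of complexity, so Lemma \ref{lem:onlyonegenerator} is not even needed. Then apply Bézout to $\mathcal W$ alone. This is exactly Proposition \ref{prop:inductivestep}, and with this change your argument becomes the paper's induction.

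There are also smaller corrections.
\begin{enumerate}
\item The hypothesis $\dim\mathcal V+\dim\mathcal H\le n+1$ is not used where you say. Along any chain of cuts, each cut uses an equation not vanishing on the current piece, so the dimension drops by one. A piece is set aside as soon as it lies in $\mathcal H$. Hence at most $d$ cuts occur (indeed at most $\min\{r,d\}$), whatever $r$ is. One only needs $\mathcal C(\mathcal H)\ge1$; the paper remarks after the proof that the hypothesis is superfluous.
\item No excess-intersection argument is needed. A piece of dimension $d-k$ that lies in $\mathcal H$ and contains a component of $\mathcal V\cap\mathcal H$ equals that component. Its contribution $\deg(\mathcal Z_{\red})\log N(\mathcal Z)=h(\mathcal Z)$ is bounded by the \emph{height} budget $\ll\mathcal C(\mathcal H)^k$ at that stage. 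A degree bound alone does not control $\log N(\mathfrak P)$.
\item In your recursion, vertical pieces created when cutting a horizontal piece are not covered by Bézout on the generic fibre. Either note that for vertical $Y$ one has exactly $h(Y\cdot\mathrm{div} f)=\deg f\cdot h(Y)$, so only generic degrees enter the archimedean term, or bound $\deg\le h/\log 2$ as in \eqref{eq:bounddegreebyheight}.
\end{enumerate}
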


Theorem \ref{thm:LIB} will be proved in Section \ref{subsec:proof_LIB}. We note that if either $\dim \mathcal{V} \leq 2$ and $\mathcal{V}_{\eta}$ is not contained in a proper algebraic subgroup of $G$ or $\dim \alpha(\mathcal{V}_\eta) = \min\{\dim \mathcal{V}_\eta,k\}$ for every surjective homomorphism of algebraic groups $\alpha: \mathbb{G}^n_{m,K} \to \mathbb{G}^k_{m,K}$, then thanks to \cite[Th\'eor\`eme 1.2]{Maurin08} and \cite[Th\'eor\`eme 1.1]{Maurin11}, the union of all irreducible components of $\mathcal{V} \cap \mathcal{H}$ that dominate $\Spec \mathcal{O}_K$, for varying $\mathcal{H}$ with $\dim \mathcal{H} + \dim \mathcal{V} \leq n+1$, is not Zariski dense in $\mathcal{V}$. 

The following remark shows how to connect Theorem \ref{thm:LIB} with the bound obtained in Section \ref{subsec:trivialboundex}.

\begin{rmk}\label{rmk:lhszerodim}

In the setting of Theorem \ref{thm:LIB}, the image of $\mathcal{V} \cap \mathcal{H} \hookrightarrow \mathbb{P}^n_{\mathcal{O}_K}$ will not be closed in general. However, if $\mathcal{Z}$ is a zero-dimensional irreducible component of $\mathcal{V} \cap \mathcal{H}$, then the image of $\mathcal{Z} \to \mathbb{P}^n_{\mathcal{O}_K}$ is a closed point. If moreover $\mathcal{V} \cap \mathcal{H}$ itself is zero-dimensional, which we will assume for the rest of this remark, then it follows from \cite[Proposition 5.11]{GoertzWedhorn} that $\mathcal{V} \cap \mathcal{H} \simeq \Spec R$, where $R = \prod_{\mathcal{Z}}{\mathcal{O}_{\mathcal{Z}}}$, the product runs over the (finitely many) irreducible components of $\mathcal{V} \cap \mathcal{H}$, and, for each irreducible component $\mathcal{Z} = \{z\}$, the ring $ \mathcal{O}_{\mathcal{Z}}$ is the local ring of $\mathcal{V} \cap \mathcal{H}$ at $z$. Then $\mathcal{O}_{\mathcal{Z}}$ is a local ring with finite residue field $k$. If $\mathfrak{P}$ denotes the maximal ideal to which $\mathcal{Z}$ maps in $\Spec \mathcal{O}_K$, then $k$ is a finite extension of $\mathbb{F}_{\mathfrak{P}}$. In this situation, we have $\log N(\mathcal{Z}) = \log |\mathbb{F}_{\mathfrak{P}}|$ and $\deg(\mathcal{Z}_{\mathrm{red}}) = [k:\mathbb{F}_{\mathfrak{P}}]$. We deduce that 
\begin{equation}\label{eq:logcardreduced}
    \sum_{\mathcal{Z}}{\deg(\mathcal{Z}_{\mathrm{\red}})\log N(\mathcal{Z})} = \log |R_{\red}|,
\end{equation}
where the left-hand side is equal to the left-hand side of \eqref{eq:trivialbound}.

In order to make a connection with the left-hand side of \eqref{eq:likely_bound_example}, we need a more refined equality: recall that, in Definition \ref{defn:multiplicity}, we have defined the multiplicity $\mult(\mathcal{Z})$ of $\mathcal{V} \cap \mathcal{H}$ at $\mathcal{Z}$ to be the length of $\mathcal{O}_{\mathcal{Z}}$ as a module over itself. Since $\mathcal{O}_{\mathcal{Z}}$ has a unique maximal ideal, any simple $\mathcal{O}_{\mathcal{Z}}$-module is isomorphic to $k$ (with its canonical structure as an $\mathcal{O}_{\mathcal{Z}}$-module) and thus $\mult(\mathcal{Z}) = (\log |\mathcal{O}_{\mathcal{Z}}|)/(\log |k|)$ (see \cite[p.~557]{GoertzWedhorn}). Altogether, we obtain that $\mult(\mathcal{Z})\deg(\mathcal{Z}_{\mathrm{\red}})\log N(\mathcal{Z}) = \log |\mathcal{O}_{\mathcal{Z}}|$ and therefore
\begin{equation}\label{eq:logcard}
    \sum_{\mathcal{Z}}{\mult(\mathcal{Z})\deg(\mathcal{Z}_{\mathrm{\red}})\log N(\mathcal{Z})} = \log |R|.
\end{equation}
If $R$ is reduced, then $\mult(\mathcal{Z}) = 1$ for all $\mathcal{Z}$ and \eqref{eq:logcard} reduces to \eqref{eq:logcardreduced}.
\end{rmk}

Remark \ref{rmk:lhszerodim} motivates the following general conjectural statement of LIB:

\begin{conj}[Likely Intersection Bound] \label{conj:LIB}
Let $\mathcal{V} \subseteq \mathbb{G}^n_{m,\mathcal{O}_K}$ be an integral closed subscheme that dominates $\Spec \mathcal{O}_K$. There exists a constant $C = C(n,K,\mathcal{V}) \in \mathbb{R}$ such that for all flat subgroup schemes $\mathcal{H} \subseteq \mathbb{G}^n_{m,\mathcal{O}_K}$ which satisfy $\dim \mathcal{H} + \dim \mathcal{V} \leq n+1$, we have
\begin{equation}\label{eq:trivialbound2}
\sum_{\mathcal{Z}}{\mult(\mathcal{Z})\deg(\mathcal{Z}_{\red})\log N(\mathcal{Z})} \leq C\mathcal{C}(\mathcal{H})^{\dim \mathcal{V}},
\end{equation}
where the sum on the left-hand side runs over the irreducible components $\mathcal{Z}$ of $\mathcal{V} \cap \mathcal{H}$ which do not dominate $\Spec \mathcal{O}_K$.
\end{conj}

If $\dim \mathcal{V} = 1$, one can follow a similar reasoning as in the proof of Theorem \ref{thm:ULIB_dim1} below to directly prove Conjecture \ref{conj:LIB}, see Remark \ref{rmk:LIB_dim1}. Proposition \ref{prop:LIBexample} shows that Conjecture \ref{conj:LIB} also holds for the $2$-dimensional subscheme $\mathcal{V} \subseteq \mathbb{G}^2_{m,\mathbb{Z}} = \Spec \mathbb{Z}[X^{\pm 1},Y^{\pm 1}]$ defined by $Y = P(X)$ if we restrict $\mathcal{H}$ to be the kernel of raising to the $N$-th power for $N \in \mathbb{N}$. It seems plausible that one could deduce more instances of Conjecture \ref{conj:LIB} from \cite[Conjecture 2]{Silverman_2017} by adapting the argument in Section \ref{subsec:trivialboundex}.

\subsection{Arithmetic intersection theory}\label{subsec:arithmeticintersectiontheory}
Let $K$ be a number field with a fixed algebraic closure $\overline{K}$ and $n \in \mathbb{N}$. In Section \ref{subsec:proof_LIB}, we will use heights of integral closed subschemes of $\mathbb{P}^n_{\mathcal{O}_K}$ as defined in \cite{BGS} and we will apply the arithmetic B\'ezout theorem from \cite{BGS} to prove LIB without multiplicities for $\mathbb{G}^n_{m,\mathcal{O}_K}$. In this subsection, we recall,  for the reader's convenience, the definitions as well as the results in the form in which we will use them. First of all, consider the trivial hermitian vector bundle $E:=\mathcal{O}_{\Spec \mathcal{O}_K}^{n+1}$ on $\Spec \mathcal{O}_K$. Its projectivization $\mathbb{P}(E)$ is an $\mathcal{O}_K$-scheme that is canonically isomorphic to the projective space $\mathbb{P}_{\mathcal{O}_K}^n$.

Let $\mathcal{Z} \subseteq \mathbb{P}(E) \simeq \mathbb{P}^n_{\mathcal{O}_K}$ be an integral closed subscheme. We denote the height of $\mathcal{Z}$ as defined in \cite[Definition 4.1.1]{BGS} by $h(\mathcal{Z})$. If $\mathcal{Z} \subseteq \mathbb{P}^n_{\mathbb{F}_{\mathfrak{P}}}$ for some maximal ideal $\mathfrak{P}$ of $\mathcal{O}_K$, then $\deg \mathcal{Z}$ denotes the usual degree of $\mathcal{Z}$ as a projective variety. If $\mathcal{Z}$ dominates $\Spec \mathcal{O}_K$, then $\deg \mathcal{Z}$ denotes the degree of the generic fiber of $\mathcal{Z}$.

For every $p \geq 0$ the definitions of height and degree can be extended by linearity to any $p$-dimensional cycle $Z \in Z_p(\mathbb{P}^n_{\mathcal{O}_K})$. Equivalently, the height function $h$ can be defined directly for cycles as in \cite[Definition 4.1.1]{BGS} and then the height defined in this way is actually linear because of \cite[Proposition 2.3.1 (i)]{BGS} and the fact that the arithmetic degree $\widehat{\deg}: \widehat{\text{CH}}^1(\Spec \mathcal{O}_K) \to \mathbb{R}$ is a group homomorphism.

\begin{rmk}\label{rmk:heightinclosedfiber}
If $\mathcal{Z}$ is an integral closed subscheme of $\mathbb{P}^n_{\mathbb{F}_{\mathfrak{P}}}$ for some maximal ideal $\mathfrak{P}$ of $\mathcal{O}_K$, then $h(\mathcal{Z}) = (\deg \mathcal{Z})\log N(\mathfrak{P})$ by \cite[Example 3.1.2.2, Equation (3.1.5), and Proposition 4.1.2 (i)]{BGS}. Note that the correction term in \cite[Proposition 4.1.2 (i)]{BGS} vanishes if $Z$ is supported on a finite union of closed fibers.
\end{rmk}

\begin{rmk}\label{rmk:heightnonnegative}
If $\mathcal{Z} \subseteq \mathbb{P}^n_{\mathcal{O}_K}$ is any integral closed subscheme, then $h(\mathcal{Z}) \geq 0$ by \cite[Theorem 5.2.3]{BGS}.
\end{rmk}

In the next lemma, we recall some useful basic facts about integral closed subschemes of $\mathbb{P}^n_{\mathcal{O}_K}$.

\begin{lem}\label{lem:degreereduction}
Let $\mathcal{V} \subseteq \mathbb{P}^n_{\mathcal{O}_K}$ be an integral closed subscheme that dominates $\Spec \mathcal{O}_K$. Then the following hold:
\begin{enumerate}
    \item the induced morphism $\pi: \mathcal{V} \to \Spec \mathcal{O}_K$ is flat and surjective,
    \item for every (not necessarily closed) point $p \in \Spec \mathcal{O}_K$, the fiber $\pi^{-1}(p)$, endowed with the reduced scheme structure, is equidimensional of dimension $\dim \mathcal{V}-1$ and has degree at most $\deg \mathcal{V}$, and
    \item for every open $U \subseteq \mathbb{P}^n_{\mathcal{O}_K}$ such that $\mathcal{V} \cap U \neq \emptyset$, $\mathcal{V} \cap U$ is integral of dimension $\dim \mathcal{V}$.
\end{enumerate}
\end{lem}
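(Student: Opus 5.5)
The three assertions will be proved in the order stated, each by reducing to standard facts about schemes over a Dedekind base together with the dimension theory of Lemma \ref{lem:dimandcodim}.

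For (1), note that $\mathcal{V}$ is integral and dominates $\Spec \mathcal{O}_K$. Hence $\mathcal{O}_{\mathcal{V}}$ is torsion-free over $\mathcal{O}_K$: a non-zero $a \in \mathcal{O}_K$ mapping to a zero-divisor would force its image to be zero, since $\mathcal{V}$ is integral, and then $\mathcal{V}$ would lie over $V(a)$, contradicting dominance. Over a Dedekind domain, torsion-free modules are flat, so $\pi$ is flat. Surjectivity comes from properness. Since $\mathcal{V}$ is closed in $\mathbb{P}^n_{\mathcal{O}_K}$, the morphism $\pi$ is proper, so $\pi(\mathcal{V})$ is closed. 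It is also dense because $\pi$ is dominant, so $\pi(\mathcal{V})$ is all of $\Spec \mathcal{O}_K$.

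For (2), equidimensionality of dimension $\dim \mathcal{V}-1$ follows immediately from Lemma \ref{lem:dimandcodim}~(1), since $\pi$ is now surjective and of finite type; taking the reduced structure does not change this. For the degree, flatness from (1) tells us that the Hilbert polynomial of the fiber $\pi^{-1}(p)$ in $\mathbb{P}^n_{k(p)}$ is independent of $p$. Comparing with the generic fiber, the leading coefficient shows that $\deg \pi^{-1}(p) = \deg \mathcal{V}_\eta = \deg \mathcal{V}$, where the degree of the (possibly non-reduced) fiber counts each component with its multiplicity. The reduced fiber $\pi^{-1}(p)_{\red}$ has the same top-dimensional components, each with multiplicity at least $1$, and all components are top-dimensional by equidimensionality. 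It follows that $\deg \pi^{-1}(p)_{\red} \le \deg \pi^{-1}(p) = \deg \mathcal{V}$. The step needing the most care is this comparison of degrees between a non-reduced scheme and its reduction. It is handled by the standard formula $\deg X = \sum_Z \mathrm{length}(\mathcal{O}_{X,Z})\deg Z$ over top-dimensional components $Z$, which is valid because the fiber is equidimensional.

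For (3), the subscheme $\mathcal{V}\cap U$ is open in the integral scheme $\mathcal{V}$ and non-empty, hence integral. Its dimension is $\dim \mathcal{V}$ by Lemma \ref{lem:dimandcodim}~(2), applied to $\pi$, which is dominant and of finite type.
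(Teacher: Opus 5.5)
Your proposal is correct and follows the paper's proof. Flatness comes from the torsion-free criterion over a Dedekind base, which is the content of the cited reference, and surjectivity comes from properness. Equidimensionality comes from Lemma~\ref{lem:dimandcodim}~(1), the degree bound from constancy of the Hilbert polynomial in flat families (Hartshorne III.9.9), and (3) from Lemma~\ref{lem:dimandcodim}~(2). Your explicit comparison of the degree of the possibly non-reduced fiber with that of its reduction, via lengths at the generic points of its components, fills in a step the paper leaves implicit.
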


\begin{proof}
The morphism $\mathcal{V} \to \Spec \mathcal{O}_K$ is flat thanks to \cite[Proposition 14.14]{GoertzWedhorn}. It is also proper, hence surjective. This proves (1). By Lemma \ref{lem:dimandcodim}~(1),
the fiber $\pi^{-1}(p)$ is equidimensional of dimension $\dim \mathcal{V}-1$. The bound for the degree then follows from \cite[Theorem III.9.9]{Hartshorne}, so (2) holds. For (3), $\mathcal{V} \cap U$ is integral since $\mathcal{V}$ is integral, and $\dim (\mathcal{V} \cap U) = \dim \mathcal{V}$ by Lemma \ref{lem:dimandcodim}.
\end{proof}

We are now ready to state the arithmetic B\'ezout theorem in the form in which we will use it.

\begin{thm}\label{thm:arithmeticbezout}
There exists a constant $c = c([K:\mathbb{Q}],n)$ such that the following holds: let $\mathcal{V}$ and $\mathcal{H}$ be integral closed subschemes of $\mathbb{P}^n_{\mathcal{O}_K}$. Suppose that $\dim \mathcal{H} = n$ and $\mathcal{V} \not\subseteq \mathcal{H}$. Then $\sum_{\mathcal{Z}}{h(\mathcal{Z})} \leq h(\mathcal{V})\deg \mathcal{H}+h(\mathcal{H})\deg \mathcal{V}+c\deg \mathcal{V}\deg \mathcal{H}$, where the sum runs over the irreducible components $\mathcal{Z}$ of $\mathcal{V} \cap \mathcal{H}$, endowed with the reduced scheme structure.
\end{thm}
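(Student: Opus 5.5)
The plan is to deduce Theorem~\ref{thm:arithmeticbezout} from the arithmetic B\'ezout inequality of Bost--Gillet--Soul\'e \cite[Theorem 5.5.1]{BGS}. That result bounds the height of the \emph{intersection cycle} $\mathcal{V}\cdot\mathcal{H}$ of two properly intersecting integral cycles. We then discard the intersection multiplicities using the nonnegativity of heights (Remark~\ref{rmk:heightnonnegative}). There are three steps: check that the intersection is proper, pass from the cycle to the sum over reduced components, and reduce from $\mathcal{O}_K$ to the base over which \cite{BGS} state the inequality.

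\emph{Properness.} $\mathbb{P}^n_{\mathcal{O}_K}$ is regular of dimension $n+1$. Since $\dim\mathcal{H}=n$, $\mathcal{H}$ is an irreducible effective Cartier divisor, locally cut out by one equation $f$. Because $\mathcal{V}$ is integral and $\mathcal{V}\not\subseteq\mathcal{H}$, $f$ restricts to a nonzero element of the function field of $\mathcal{V}$. By Krull's Hauptidealsatz, every irreducible component $\mathcal{Z}$ of $\mathcal{V}\cap\mathcal{H}$ has codimension exactly $1$ in $\mathcal{V}$. Using Lemma~\ref{lem:dimandcodim}~(3), or the analogous statement for components in a closed fibre, this gives $\dim\mathcal{Z}=\dim\mathcal{V}-1$, so the intersection is proper. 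The intersection cycle is then
\[\mathcal{V}\cdot\mathcal{H}=\sum_{\mathcal{Z}} m_{\mathcal{Z}}[\mathcal{Z}],\]
where $m_{\mathcal{Z}}$ is the length of $\mathcal{O}_{\mathcal{V},\mathcal{Z}}/(f)$. Each $m_{\mathcal{Z}}\ge 1$, and every component $\mathcal{Z}$, with its reduced structure, occurs in the sum.

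\emph{Dropping multiplicities.} By Remark~\ref{rmk:heightnonnegative}, $h(\mathcal{Z})\ge 0$ for each $\mathcal{Z}$. Using linearity of $h$ on cycles, we get
\[\sum_{\mathcal{Z}}h(\mathcal{Z})\le\sum_{\mathcal{Z}} m_{\mathcal{Z}}h(\mathcal{Z})=h(\mathcal{V}\cdot\mathcal{H}).\]
By \cite[Theorem 5.5.1]{BGS}, $h(\mathcal{V}\cdot\mathcal{H})\le h(\mathcal{V})\deg\mathcal{H}+h(\mathcal{H})\deg\mathcal{V}+c\deg\mathcal{V}\deg\mathcal{H}$. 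Combining the two inequalities gives the claim.

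\emph{Change of base.} I expect this to be the main obstacle. The constant must be uniform and depend only on $[K:\mathbb{Q}]$ and $n$, while \cite{BGS} may state the B\'ezout inequality over $\mathbb{P}^n_{\mathbb{Z}}$. I would first check whether their statement already covers an arithmetic ring such as $\mathcal{O}_K$, with the Fubini--Study metric at every complex embedding. If it does not, I would push forward along the finite morphism $p\colon\mathbb{P}^n_{\mathcal{O}_K}\to\mathbb{P}^n_{\mathbb{Z}}$. The pullback of $\overline{\mathcal{O}(1)}$ is the Fubini--Study bundle, so the projection formula gives $h(p_*Z)=h(Z)$ for the heights computed on either side. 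Generic degrees scale by the factor $[K:\mathbb{Q}]$, which only affects the constant. The delicate point is the intersection: $p_*\mathcal{V}\cdot p_*\mathcal{H}$ is not $p_*(\mathcal{V}\cdot\mathcal{H})$. I would therefore instead redo the B\'ezout argument of \cite{BGS} directly over $\mathcal{O}_K$. That argument is a Chow-form/diagonal argument in $\mathbb{P}^n\times\mathbb{P}^n$ via the join, and it only uses the arithmetic intersection formalism, which \cite{BGS} develop over any arithmetic ring. The archimedean error term there is a sum over the $[K:\mathbb{Q}]$ embeddings, each contributing at most $c(n)\deg\mathcal{V}\deg\mathcal{H}$. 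This yields $c=c([K:\mathbb{Q}],n)$.
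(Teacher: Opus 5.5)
Your proposal is correct and follows essentially the paper's route: you show via Krull's principal ideal theorem, with $\mathcal{H}$ locally cut out by one equation, that every component of $\mathcal{V}\cap\mathcal{H}$ has dimension $\dim\mathcal{V}-1$, so the intersection cycle is a combination of the reduced components with positive integer coefficients, and then you apply the Bost--Gillet--Soul\'e arithmetic B\'ezout inequality and drop multiplicities using $h\geq 0$. There are two minor differences. The paper treats the cases where $\mathcal{V}$ or $\mathcal{H}$ is vertical directly, by the classical B\'ezout theorem and Remark~\ref{rmk:heightinclosedfiber}, and uses BGS only when both are horizontal. The base-change step you single out as the main obstacle does not arise, because BGS already work on $\mathbb{P}(E)$ for a hermitian bundle $E$ over $\Spec\mathcal{O}_K$, so the paper applies \cite[Theorem 4.2.3]{BGS} directly on $\mathbb{P}^n_{\mathcal{O}_K}$.
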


\begin{proof}
Suppose first that $\mathcal{V} \subseteq \mathbb{P}^n_{\mathbb{F}_{\mathfrak{P}}}$ for some maximal ideal $\mathfrak{P}$ of $\mathcal{O}_K$ and $\mathcal{V} \cap \mathcal{H} \neq \emptyset$. Since $\mathcal{V} \not\subseteq \mathcal{H}$, we deduce that $\mathcal{H}$ dominates $\Spec \mathcal{O}_K$. By Lemma \ref{lem:degreereduction}~(2), $\mathcal{H} \cap \mathbb{P}^n_{\mathbb{F}_{\mathfrak{P}}}$, endowed with the reduced scheme structure, is equidimensional of dimension $n-1$ and of degree at most $\deg \mathcal{H}$. The theorem now follows from Remark \ref{rmk:heightinclosedfiber} and the classical B\'ezout theorem \cite[Example 8.4.6]{Fulton}.

We now treat the case where $\mathcal{V}$ dominates $\Spec \mathcal{O}_K$. If $\mathcal{H}$ does not dominate $\Spec \mathcal{O}_K$, then $\mathcal{H} = \mathbb{P}^n_{\mathbb{F}_{\mathfrak{P}}}$ for some maximal ideal $\mathfrak{P}$ of $\mathcal{O}_K$. By Lemma \ref{lem:degreereduction}~(2), $\mathcal{V} \cap \mathbb{P}^n_{\mathbb{F}_{\mathfrak{P}}}$, endowed with the reduced scheme structure, has degree at most $\deg \mathcal{V}$, so the theorem follows from Remark \ref{rmk:heightinclosedfiber}.

Hence, we can assume that both $\mathcal{V}$ and $\mathcal{H}$ dominate $\Spec \mathcal{O}_K$. We want to show that each irreducible component of $\mathcal{V} \cap \mathcal{H}$ has dimension $\dim \mathcal{V}-1$. This implies that $\mathcal{V}$ and $\mathcal{H}$ meet properly on $\mathbb{P}^n_K$ so that we can apply the arithmetic B\'ezout theorem \cite[Theorem 4.2.3]{BGS}. Let therefore $\mathcal{Z}$ be such an irreducible component and let $\eta$ denote the generic point of $\Spec \mathcal{O}_K$. If $\mathcal{Z}$ dominates $\Spec \mathcal{O}_K$, then $\mathcal{Z}_\eta$ is an irreducible component of $\mathcal{V}_\eta \cap \mathcal{H}_\eta$. It follows from Lemma \ref{lem:dimandcodim}~(1) and \cite[Proposition 5.40 and Corollary 5.42]{GoertzWedhorn} that $\dim \mathcal{Z} = \dim \mathcal{Z}_\eta + 1 = \dim \mathcal{V}_\eta = \dim \mathcal{V}-1$.

Suppose now that $\mathcal{Z} \subseteq \mathbb{P}^n_{\mathbb{F}_{\mathfrak{P}}}$ for some maximal ideal $\mathfrak{P}$ of $\mathcal{O}_K$. Choose a finite set $S$ of maximal ideals of $\mathcal{O}_K$ such that the ring of $S$-integers $\mathcal{O}_{K,S}$ is a unique factorization domain and $\mathfrak{P} \not\in S$.
Let $\mathbb{A}^n_{\mathcal{O}_{K,S}} \simeq U \subseteq \mathbb{P}^n_{\mathcal{O}_{K,S}}$ be a standard open affine subset that intersects $\mathcal{Z}$. By Lemma \ref{lem:degreereduction}~(3), we have $\dim \mathcal{H} \cap U = \dim \mathcal{H} = n$ and $\dim U = \dim \mathbb{P}^n_{\mathcal{O}_K} = n+1$. Therefore, we have $\codim_{U}{\mathcal{H} \cap U} = 1$ by Lemma \ref{lem:dimandcodim}~(3). By \cite[Proposition 5.31]{GoertzWedhorn}, $\mathcal{H} \cap U$ is the zero set of a single regular function on $U$. It then follows from \cite[Proposition 5.35]{GoertzWedhorn} that $\codim_{\mathcal{V} \cap U}{\mathcal{Z} \cap U} \leq 1$. Since $\mathcal{Z} \cap U \subseteq \mathcal{V} \cap U \cap \mathbb{P}^n_{\mathbb{F}_{\mathfrak{P}}} \subsetneq \mathcal{V} \cap U$, this implies that $\mathcal{Z}$ is an irreducible component of $\mathcal{V} \cap \mathbb{P}^n_{\mathbb{F}_{\mathfrak{P}}}$ and so $\dim \mathcal{Z} = \dim \mathcal{V}-1$ by Lemma \ref{lem:degreereduction}~(2).

Thus, each irreducible component of $\mathcal{V} \cap \mathcal{H}$ has dimension $\dim \mathcal{V}-1$ and is therefore proper as defined in \cite[Example 20.2.2]{Fulton}. Hence, the intersection cycle $\mathcal{V}.\mathcal{H}$ in \cite[Section 2.2]{BGS} (see also \cite[Chapters 8 and 20]{Fulton}) is a linear combination of the irreducible components of $\mathcal{V} \cap \mathcal{H}$, endowed with the reduced scheme structures, with coefficients in $\mathbb{N}$, see \cite[Example 20.2.2]{Fulton}. The theorem then follows from \cite[Theorem 4.2.3]{BGS} and Remark \ref{rmk:heightnonnegative}.
\end{proof}

In certain cases, the height is easy to estimate:

\begin{prop}\label{prop:heightestimates}
There exists a constant $c = c(K,n)$ such that the following holds: suppose that $\mathcal{Z}_i$ ($i = 1,\hdots,m$) are pairwise distinct irreducible components, endowed with the reduced scheme structure, of the closed subscheme of $\mathbb{P}^n_{\mathcal{O}_K}$ defined by a homogeneous polynomial $P \in \mathcal{O}_K[X_0,\hdots,X_n]\backslash\{0\}$. Suppose furthermore that each $\mathcal{Z}_i$ dominates $\Spec \mathcal{O}_K$ ($i = 1, \hdots, m$). Then \[ \sum_{i=1}^{m}{h(\mathcal{Z}_i)} \leq [K:\mathbb{Q}]h(P) + c\deg P,\]
where $h(P)$ denotes the usual absolute logarithmic Weil height of the vector of coefficients of $P$, seen as a point in projective space.
\end{prop}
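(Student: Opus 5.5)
I would prove the bound by reducing to the BGS height of the whole Cartier divisor $\mathrm{div}(P)$ on $\mathbb{P}^n_{\mathcal{O}_K}$ and then computing that height explicitly. Write $\mathrm{div}(P) = \sum_j m_j \mathcal{D}_j$, where the $\mathcal{D}_j$ are the irreducible components (reduced) and $m_j \geq 1$. The $\mathcal{Z}_i$ are among the $\mathcal{D}_j$. The components that do not dominate $\Spec \mathcal{O}_K$ are full fibres $\mathbb{P}^n_{\mathbb{F}_{\mathfrak{P}}}$, and their heights are $\log N(\mathfrak{P}) \geq 0$ by Remark \ref{rmk:heightinclosedfiber}. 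The dominating components have non-negative height by Remark \ref{rmk:heightnonnegative}. Hence
\[\sum_{i=1}^m h(\mathcal{Z}_i) \leq \sum_j m_j h(\mathcal{D}_j) = h(\mathrm{div}(P)),\]
using linearity of the height on cycles. So it suffices to show $h(\mathrm{div}(P)) \leq [K:\mathbb{Q}]h(P) + c\deg P$.

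For that, I would use the defining formula of the BGS height via the arithmetic intersection with $\widehat{c}_1(\overline{\mathcal{O}(1)})^{n}$, as in \cite[Proposition 3.2.1 / 4.1.2]{BGS}. The section $P$ of $\mathcal{O}(d)$, with $d = \deg P$, gives
\[h(\mathrm{div}(P)) = d\, h(\mathbb{P}^n_{\mathcal{O}_K}) - \sum_{\sigma: K \hookrightarrow \mathbb{C}} \int_{\mathbb{P}^n(\mathbb{C})} \log \lVert P^\sigma \rVert\, \omega^n,\]
where $\lVert\cdot\rVert$ is the Fubini--Study metric and $\omega$ the normalized Fubini--Study form. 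The term $h(\mathbb{P}^n_{\mathcal{O}_K})$ is an explicit constant depending on $n$ and $[K:\mathbb{Q}]$. The integral $\int \log\lVert P^\sigma\rVert\,\omega^n$ is a Mahler-type measure of $P^\sigma$. Comparing with the Mahler measure on the torus, following Philippon or \cite[Prop.\ 1.6.x]{BombieriGubler}, gives $-\int \log\lVert P^\sigma\rVert\,\omega^n \leq \log\lVert P^\sigma\rVert_\infty + c(n)d$. In the other direction one has $\geq \log\lVert P^\sigma\rVert_\infty - c(n)d$, but only the upper bound is needed here.

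Summing over $\sigma$ gives $\sum_\sigma \log \lVert P^\sigma\rVert_\infty = \sum_{v\mid\infty} [K_v:\mathbb{R}]\log|P|_v$. This is not yet $[K:\mathbb{Q}]h(P)$, because the height is projective and requires the non-archimedean contributions. The point is that the vertical components of $\mathrm{div}(P)$ record exactly $\ord_{\mathfrak{P}}$ of the content of $P$. These contributions are $\sum_{\mathfrak{P}} \ord_{\mathfrak{P}}(\mathrm{cont}(P)) \log N(\mathfrak{P}) = -\sum_{v\nmid\infty}[K_v:\mathbb{Q}_v]\log|P|_v \geq 0$. I would therefore repeat the first step with only the horizontal part $\mathrm{div}(P)_{\mathrm{hor}}$, whose height equals $h(\mathrm{div}(P))$ minus these vertical terms. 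Combining the two steps yields
\[h(\mathrm{div}(P)_{\mathrm{hor}}) \leq \sum_{v} [K_v:\mathbb{Q}_v]\log |P|_v + c\,d = [K:\mathbb{Q}]h(P) + c\,d,\]
with the sum running over all places $v$ of $K$. The $\mathcal{Z}_i$, being dominant, are among the horizontal components, so the bound for $\sum_i h(\mathcal{Z}_i)$ follows.

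The main obstacle is pinning down the archimedean comparison between $\int\log\lVert P\rVert\,\omega^n$ (Fubini--Study) and $\log$ of the coefficient norm with a constant linear in $d$ and depending only on $n$. I would first try to cite the known estimate that these differ by at most $d\sum_{k\leq n}\frac{1}{2k}$ plus $\log$ of binomial factors, which is $O_n(d)$. I would also need to check carefully the sign conventions and the normalization of $|\cdot|_v$ from \cite{BombieriGubler} against those of \cite{BGS}.
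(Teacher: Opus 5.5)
Your route works and differs from the paper's, but the central displayed formula has the wrong sign. The divisor-of-a-section formula of \cite[Proposition 3.2.1]{BGS} reads
\[
h(\mathrm{div}(P)) = d\,h(\mathbb{P}^n_{\mathcal{O}_K}) + \sum_{\sigma}\int_{\mathbb{P}^n(\mathbb{C})}\log\lVert P^\sigma\rVert\,\omega^n .
\]
As a sanity check, take $P=X_0$ over $\mathbb{Q}$. The left-hand side is then $h(\mathbb{P}^{n-1}_{\mathbb{Z}})$. Since $\lVert X_0\rVert\leq 1$, your sign would force $h(\mathbb{P}^{n-1}_{\mathbb{Z}})>h(\mathbb{P}^{n}_{\mathbb{Z}})$, contradicting $h(\mathbb{P}^{m}_{\mathbb{Z}})=\frac12\sum_{j\leq m}\sum_{k\leq j}\frac1k$.

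Your archimedean inequality $-\int\log\lVert P^\sigma\rVert\,\omega^n\leq\log\lVert P^\sigma\rVert_\infty+c(n)d$ is also reversed, and it fails for a single embedding: take $P=u^kX_0$ with $u\in\mathcal{O}_K^{\ast}$, $|\sigma(u)|<1$, and $k\to\infty$. The two slips cancel, which is why you reach the correct final shape. With the sign corrected, all you need is
\[
\int_{\mathbb{P}^n(\mathbb{C})}\log\lVert P^\sigma\rVert\,\omega^n\leq\log\lVert P^\sigma\rVert_\infty+\log\binom{n+d}{n}.
\]
This is immediate, since $|P^\sigma(z)|\leq\binom{n+d}{n}\lVert P^\sigma\rVert_\infty$ on the unit sphere and $\omega^n$ has total mass $1$. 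So the ``main obstacle'' you anticipate is not there, and no comparison with the Mahler measure is needed.

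The remaining steps are right. A non-dominant component of $\mathrm{div}(P)$ is a full fibre $\mathbb{P}^n_{\mathbb{F}_{\mathfrak{P}}}$, with multiplicity $\ord_{\mathfrak{P}}$ of the content of $P$. Hence the vertical part has height exactly $-\sum_{v\nmid\infty}[K_v:\mathbb{Q}_v]\log|P|_v$. Linearity and non-negativity of heights then let you drop every component except the $\mathcal{Z}_i$.

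The paper argues on the generic fibre instead. It takes $P_i$ to be the irreducible factor of $P$ over $K$ defining $(\mathcal{Z}_i)_K$, and compares $h(\mathcal{Z}_i)$ with $[K:\mathbb{Q}]$ times Philippon's height of $(\mathcal{Z}_i)_K$, up to $O(\deg\mathcal{Z}_i)$. It then uses Philippon's formula for the height of a hypersurface in terms of the Mahler-type height $h_{\mathrm{Phil}}(P_i)$ of its equation. It finishes with additivity and non-negativity of $h_{\mathrm{Phil}}$, i.e.\ $\sum_i h_{\mathrm{Phil}}(P_i)\leq h_{\mathrm{Phil}}(P)\leq h(P)+O(\deg P)$.

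You stay on $\mathbb{P}^n_{\mathcal{O}_K}$ throughout. One application of the divisor formula computes the height of the whole cycle $\mathrm{div}(P)$, and the finite places enter through its vertical part. Unwanted components are discarded using non-negativity of BGS heights of cycles, rather than of heights of forms. Your version avoids Philippon's heights, the comparison between the two height theories, and the factorisation of $P$, and it gives explicit constants. The paper's version is shorter given the cited results, and keeps the bookkeeping on polynomials, where additivity under products is classical.
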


\begin{proof}
Let $Z_i$ denote the generic fiber of $\mathcal{Z}_i$ ($i = 1,\hdots,m$). It follows from \cite[p.~346]{Philippon95} and \cite[Proposition 4.1.2]{BGS} that there exists a constant $c' = c'(K,n)$ such that
\begin{equation}\label{eq:heightcomparison}
    |[K:\mathbb{Q}]h(Z_i) - h(\mathcal{Z}_i)| \leq c'\deg \mathcal{Z}_i,
\end{equation}
where $i = 1, \hdots, m$ and $h(Z_i)$ is defined as in \cite{Philippon95}.

Let $P_i$ be a monic irreducible factor of $P$ in $K[X_0,\hdots,X_n]$ such that $Z_i$ is the hypersurface defined by $P_i$ ($i  = 1,\hdots,m$).
Since the $\mathcal{Z}_i$ are pairwise distinct, so are the $P_i$. It follows from the formula in \cite[top of p. 347]{Philippon95} that there exists a constant $c'' = c''(n)$ such that $h(Z_i) = h_{\mathrm{Phil}}(P_i) + c''\deg P_i$ ($i = 1,\hdots,m$), where $h_{\mathrm{Phil}}$ denotes the height of a homogeneous form used in \cite{Philippon95}. Note that $\sum_{i=1}^{m}{\deg P_i} \leq \deg P$ and $\sum_{i=1}^{m}{h_{\mathrm{Phil}}(P_i)} \leq  h_{\mathrm{Phil}}(P)$ thanks to the pairwise distinctness of the $P_i$ and the fact that $h_{\mathrm{Phil}}$ is non-negative and sends products of non-zero forms to sums of heights. Furthermore, we can bound $h_{\mathrm{Phil}}(P)$ from above by $h(P)+c''' \deg P$ for a constant $c''' = c'''(n)$. Hence, the proposition follows from \eqref{eq:heightcomparison} and the fact that $\sum_{i=1}^{m}{\deg \mathcal{Z}_i} = \sum_{i=1}^{m}{\deg P_i} \leq \deg P$.
\end{proof}

\subsection{Proof of Theorem \ref{thm:LIB}} \label{subsec:proof_LIB}

We return to the setting of Section \ref{subsec:prooftrivialboundnomult}. If $\mathcal{W} \subseteq \mathbb{G}^n_{m,\mathcal{O}_K}$ is an integral closed subscheme and $\overline{\mathcal{W}}$ denotes its schematic closure in $\mathbb{P}^{n}_{\mathcal{O}_K}$ (which is reduced), we set $h(\mathcal{W}) = h(\overline{\mathcal{W}})$ and $\deg \mathcal{W} = \deg \overline{\mathcal{W}}$ where $h$ and $\deg$ are defined as in Section \ref{subsec:arithmeticintersectiontheory}. The following proposition will be useful in the proof of Theorem \ref{thm:LIB}. In its proof, we will use the arithmetic intersection theory from Section \ref{subsec:arithmeticintersectiontheory}.

\begin{prop}\label{prop:inductivestep}
There exists a constant $C' = C'(K,n) \geq 1$ such that the following holds: let $\mathcal{W} \subseteq \mathbb{G}^n_{m,\mathcal{O}_K}$ be an integral closed subscheme and suppose that $\mathcal{H} \subseteq \mathbb{G}^n_{m,\mathcal{O}_K}$ is a flat subgroup scheme which does not contain $\mathcal{W}$. There is a finite set $S$ of integral closed subschemes $\mathcal{Z} \subsetneq \mathcal{W}$ such that each irreducible component of $\mathcal{W} \cap \mathcal{H}$, endowed with the reduced scheme structure, is contained in some $\mathcal{Z} \in S$ and $\sum_{\mathcal{Z} \in S}{(h(\mathcal{Z})+\deg \mathcal{Z})} \leq C'(h(\mathcal{W})+\deg \mathcal{W})\mathcal{C}(\mathcal{H})$.
\end{prop}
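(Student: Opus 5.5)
We will cut $\mathcal{W}$ by a single hypersurface of controlled degree and height that contains $\mathcal{H}$ but not $\mathcal{W}$. Then we apply the arithmetic B\'ezout theorem (Theorem \ref{thm:arithmeticbezout}) to the closures in $\mathbb{P}^n_{\mathcal{O}_K}$, and bound the height of the hypersurface components via Proposition \ref{prop:heightestimates}.

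\textbf{Choosing equations for $\mathcal{H}$.} By Lemma \ref{lem:onlyonegenerator} and Corollary \ref{cor:complexityingm}, write $\mathcal{H} = \ker \phi$ with $\phi$ given by a matrix $M$ satisfying $\lVert M\rVert_\infty \leq \max\{2^{n-2},1\}\mathcal{C}(\mathcal{H})$. Then $\mathcal{H}$ is cut out in $\mathbb{G}^n_{m,\mathcal{O}_K}$ by the binomial equations $\prod_j X_j^{a_{ij}}=1$ for $i=1,\dots,n$. Splitting exponents into positive and negative parts and homogenizing gives homogeneous binomials $B_i = X_0^{e_i}\prod_{a_{ij}>0}X_j^{a_{ij}} - X_0^{f_i}\prod_{a_{ij}<0}X_j^{-a_{ij}}$. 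Each $B_i$ has coefficients $\pm1$ and degree at most $n\lVert M\rVert_\infty \ll_n \mathcal{C}(\mathcal{H})$. On the torus, $\mathcal{H}$ is the common zero locus of the $B_i$. Since $\mathcal{W}\not\subseteq\mathcal{H}$, some $B_i$ does not vanish identically on $\mathcal{W}$; fix such an index $i$ and set $P=B_i$.

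\textbf{Applying B\'ezout.} Let $\overline{\mathcal{W}}$ be the closure of $\mathcal{W}$. Every irreducible component of $\mathcal{W}\cap\mathcal{H}$ lies in $\mathcal{W}\cap\{P=0\}$, hence in some irreducible component of $\overline{\mathcal{W}}\cap\{P=0\}$ meeting the torus. Such a component intersected with the torus is a proper integral closed subscheme of $\mathcal{W}$. Let $S$ be the set of these intersections, noting that heights and degrees are unchanged on passing to closures. To bound $\sum_{\mathcal{Z}\in S}(h+\deg)$, decompose $\{P=0\}$ into irreducible components $\mathcal{Y}$. Each $\mathcal{Y}$ is either vertical, i.e. $\mathbb{P}^n_{\mathbb{F}_\mathfrak{P}}$ with $\mathfrak{P}$ dividing the content of $P$, or horizontal. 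Because $P$ has coefficients $\pm1$, its content is a unit and no vertical components occur. Apply Theorem \ref{thm:arithmeticbezout} to $\overline{\mathcal{W}}$ and each $\mathcal{Y}$ with $\overline{\mathcal{W}}\not\subseteq\mathcal{Y}$; this holds for all $\mathcal{Y}$ since $P\not\equiv 0$ on $\mathcal{W}$. Summing over $\mathcal{Y}$ gives
\[\sum_{\mathcal{Z}} h(\mathcal{Z}) \leq h(\mathcal{W})\sum\deg\mathcal{Y} + \deg\mathcal{W}\sum h(\mathcal{Y}) + c\deg\mathcal{W}\sum\deg\mathcal{Y}.\]
Proposition \ref{prop:heightestimates} gives $\sum h(\mathcal{Y})\leq [K:\mathbb{Q}]h(P)+c\deg P = c\deg P$, since $h(P)=0$. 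Also $\sum\deg\mathcal{Y}\leq\deg P$. Classical B\'ezout on the generic fibre, or Remark \ref{rmk:heightinclosedfiber} for vertical $\mathcal{W}$, gives $\sum\deg\mathcal{Z}\leq\deg\mathcal{W}\deg P$. Combining these with $\deg P\ll\mathcal{C}(\mathcal{H})$ yields the claim with $C'$ depending only on $K$ and $n$.

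\textbf{Main obstacle.} The delicate point is bookkeeping for multiplicities and degrees when several components $\mathcal{Y}$ meet $\overline{\mathcal{W}}$ in a common component, and when $\mathcal{W}$ is vertical. For vertical $\mathcal{W}$ we avoid Theorem \ref{thm:arithmeticbezout}'s arithmetic part and use only Remark \ref{rmk:heightinclosedfiber}. Overcounting a common component only enlarges the left-hand sum we bound, so this is harmless. We must also ensure that $h(\mathcal{Z})\geq 0$ (Remark \ref{rmk:heightnonnegative}), so that dropping terms is legitimate. Finally, if $\mathcal{C}(\mathcal{H})=0$, then $\mathcal{H}$ is the whole torus and contains $\mathcal{W}$, so this case is excluded and $\mathcal{C}(\mathcal{H})\ge1$ can absorb the additive constants.
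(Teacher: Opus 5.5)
Your overall route is the paper's: a single binomial of degree $\ll_n \mathcal{C}(\mathcal{H})$ with coefficients $\pm 1$, hence no vertical components of the hypersurface, then Theorem~\ref{thm:arithmeticbezout} applied to each component $\mathcal{Y}$, then Proposition~\ref{prop:heightestimates} with $h(P)=0$. However, one step fails as written. You claim that classical B\'ezout on the generic fibre gives $\sum_{\mathcal{Z}}\deg\mathcal{Z}\le\deg\mathcal{W}\deg P$.

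When $\mathcal{W}$ dominates $\Spec\mathcal{O}_K$, the generic fibre only sees the horizontal components of $\overline{\mathcal{W}}\cap\{P=0\}$. Components lying in some $\mathbb{P}^n_{\mathbb{F}_{\mathfrak{P}}}$ are invisible there, and their number is not bounded in terms of $\deg P$ at all, so the inequality is false. For instance, take $K=\mathbb{Q}$, $n=1$, $\mathcal{H}=\ker[1]$ (so $\mathcal{C}(\mathcal{H})=1$ and $P=X_1-X_0$), and let $\mathcal{W}$ be the closure of the point $a=1+p_1\cdots p_k$ for distinct primes $p_i$. Then $\mathcal{W}\cap\{P=0\}\simeq\Spec\mathbb{Z}/(a-1)\mathbb{Z}$ has $k$ components of degree $1$, while $\deg\mathcal{W}\deg P=1$. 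These vertical components are exactly the objects the paper is about, so this is not a negligible corner case.

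The repair is short. If $\mathcal{Z}$ lies over $\mathfrak{P}$, Remark~\ref{rmk:heightinclosedfiber} gives
\[
\deg\mathcal{Z}=h(\mathcal{Z})/\log N(\mathfrak{P})\le 2h(\mathcal{Z}).
\]
Hence
\[
\sum_{\mathcal{Z}}\deg\mathcal{Z}\le\deg\mathcal{W}\deg P+2\sum_{\mathcal{Z}}h(\mathcal{Z}),
\]
and the height sum is then controlled by your arithmetic B\'ezout estimate. This is precisely the paper's inequality~\eqref{eq:bounddegreebyheight}. In the example, the large degree sum is paid for by $h(\mathcal{W})$, which is of size about $\log a\ge k\log 2$.

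Two minor points. Before invoking Theorem~\ref{thm:arithmeticbezout}, you should check that $\dim\mathcal{Y}=n$; this follows since $\mathcal{Y}$ is horizontal and its generic fibre is a component of a hypersurface. Also, the detour through Lemma~\ref{lem:onlyonegenerator} is unnecessary, since the definition of $\mathcal{C}(\mathcal{H})$ already supplies defining binomials with exponents bounded by $\mathcal{C}(\mathcal{H})$. The detour only costs a harmless factor $2^{n-2}$. With the degree fix, your argument coincides with the paper's proof, including your treatment of vertical $\mathcal{W}$ via B\'ezout in the fibre.
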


\begin{proof}
Let $x_1,\hdots,x_n$ denote the affine coordinates on $\mathbb{G}^n_{m,\mathcal{O}_K}$. It follows from $\mathcal{W} \not\subseteq \mathcal{H}$ and the definition of the complexity that there exist $a_i \in \mathbb{Z}$ such that $P(x_1,\hdots,x_n) = \prod_{i=1}^{n}{x_i^{a_i}}-1$ vanishes identically on $\mathcal{H}$, but not on $\mathcal{W}$, and $|a_i| \leq \mathcal{C}(\mathcal{H})$ ($i = 1,\hdots,n$).

Let now $X_0,X_1, \hdots,X_n$ denote the projective coordinates on $\mathbb{P}^n_{\mathcal{O}_K}$. Set 
\[a = \max\left\{\sum_{a_i > 0}{a_i},\sum_{a_i < 0}{-a_i}\right\}\]
and
\[ Q(X_0,\hdots,X_n) = X_0^{\max\{0,a-\sum_{a_i>0}{a_i}\}}\prod_{a_i > 0}{X_i^{a_i}} - X_0^{\max\{0,a+\sum_{a_i<0}{a_i}\}}\prod_{a_i < 0}{X_i^{-a_i}}.\]
The homogeneous polynomial $Q$ of degree $a \leq n\mathcal{C}(\mathcal{H})$ vanishes identically on the Zariski closure $\overline{\mathcal{H}}$ of $\mathcal{H}$ in $\mathbb{P}^{n}_{\mathcal{O}_K}$, but not on the Zariski closure $\overline{\mathcal{W}}$ of $\mathcal{W}$ in $\mathbb{P}^{n}_{\mathcal{O}_K}$. Let $\mathcal{Q}$ denote the closed subscheme of $\mathbb{P}^{n}_{\mathcal{O}_K}$ defined by $Q$.

Our set $S$ will be the set of irreducible components of the intersection of $\mathcal{W}$ with the closed subscheme of $\mathbb{G}^n_{m,\mathcal{O}_K}$ defined by $P$, where each irreducible component is endowed with the reduced scheme structure. It is then clear that each irreducible component of $\mathcal{W} \cap \mathcal{H}$ is contained in some $\mathcal{Z} \in S$. If $\mathcal{Z} \in S$, then its Zariski closure $\overline{\mathcal{Z}}$ in $\mathbb{P}^{n}_{\mathcal{O}_K}$ is an irreducible component of  $\overline{\mathcal{W}} \cap \mathcal{Q}$. 

To bound $\sum_{\mathcal{Z} \in S}{(h(\mathcal{Z})+\deg \mathcal{Z})}$, we distinguish two cases: first, we treat the case where $\mathcal{W}$ is contained in a closed fiber $\mathbb{G}^n_{m,\mathbb{F}_{\mathfrak{P}}}$ for some maximal ideal $\mathfrak{P}$ of $\mathcal{O}_K$. Then it follows from Remark \ref{rmk:heightinclosedfiber} together with Lemma \ref{lem:degreereduction}~(2) and the theorem of B\'ezout \cite[Example 8.4.6]{Fulton} that
\[\sum_{\mathcal{Z} \in S}{(h(\mathcal{Z})+\deg \mathcal{Z})} \leq (h(\mathcal{W})+\deg \mathcal{W})\deg Q \leq (h(\mathcal{W})+\deg \mathcal{W})n\mathcal{C}(\mathcal{H}).\]
Thus, we just have to take $C' \geq n$ in this case.

We now treat the case where $\mathcal{W}$ dominates $\Spec \mathcal{O}_K$: let $\mathcal{Q}'$ be an irreducible component of $\mathcal{Q}$, endowed with the reduced scheme structure. If an irreducible component $\mathcal{Z}$ of $\overline{\mathcal{W}} \cap \mathcal{Q}'$ is contained in a closed fiber $\mathbb{P}^{n}_{\mathbb{F}_{\mathfrak{P}}}$ for some maximal ideal $\mathfrak{P}$ of $\mathcal{O}_K$, then $\deg \mathcal{Z} = (\log N(\mathfrak{P}))^{-1} h(\mathcal{Z}) \leq 2h(\mathcal{Z})$. Together with B\'ezout's theorem \cite[Example 8.4.6]{Fulton}, applied on the generic fiber, and the fact that all the degrees and heights are non-negative (see Remark \ref{rmk:heightnonnegative}), this implies that
\begin{equation}\label{eq:bounddegreebyheight}
\sum_{\mathcal{Z}}{(h(\mathcal{Z})+\deg \mathcal{Z})} \leq \deg\mathcal{W}\deg\mathcal{Q}'+3\sum_{\mathcal{Z}}{h(\mathcal{Z})}
\end{equation}
where both sums run over the irreducible components $\mathcal{Z}$ of $\overline{\mathcal{W}} \cap \mathcal{Q}'$, endowed with the reduced scheme structure.

If $\mathcal{Q}'$ did not dominate $\Spec \mathcal{O}_K$, then $\mathcal{Q}' \subseteq \mathbb{P}^{n}_{\mathbb{F}_{\mathfrak{P}}}$ for some maximal ideal $\mathfrak{P}$ of $\mathcal{O}_K$ and in fact $\mathcal{Q}' \subsetneq \mathbb{P}^{n}_{\mathbb{F}_{\mathfrak{P}}}$ because the reduction of $Q$ modulo $\mathfrak{P}$ is non-zero. It follows that $\codim_{\mathbb{P}^n_{\mathcal{O}_K}}{\mathcal{Q}'} \geq 2$. But this contradicts \cite[Proposition 5.35]{GoertzWedhorn}.

Hence, $\mathcal{Q}'$ dominates $\Spec \mathcal{O}_K$. We therefore have $\dim \mathcal{Q}' = n$ by Lemma \ref{lem:degreereduction}~(2) and \cite[Proposition 5.40]{GoertzWedhorn}, applied on the generic fiber. Furthermore, $\overline{\mathcal{W}} \not\subseteq \mathcal{Q}'$. Thus, we can apply Theorem \ref{thm:arithmeticbezout} to deduce that
\begin{equation}\label{eq:arithmeticbezout}
\sum_{\mathcal{Z}}{h(\mathcal{Z})} \leq h(\mathcal{W})\deg \mathcal{Q}'+h(\mathcal{Q}')\deg \mathcal{W}+c\deg \mathcal{Q}'\deg \mathcal{W}
\end{equation}
where the sum runs over the irreducible components $\mathcal{Z}$ of $\overline{\mathcal{W}} \cap \mathcal{Q}'$, endowed with the reduced scheme structure, and $c = c([K:\mathbb{Q}],n)$ is a constant that depends only on $[K:\mathbb{Q}]$ and $n$. We can assume without loss of generality that $c \geq 1$.

We now combine \eqref{eq:bounddegreebyheight} with \eqref{eq:arithmeticbezout} and sum over all irreducible components $\mathcal{Q}'$ of $\mathcal{Q}$. We get
\begin{equation}\label{eq:boundheightsdegreesz}
    \sum_{\mathcal{Z}}{(h(\mathcal{Z}) + \deg \mathcal{Z})} \leq (3c+1)(h(\mathcal{W})+\deg \mathcal{W})\sum_{\mathcal{Q}'}{(h(\mathcal{Q}')+\deg \mathcal{Q}')}
\end{equation}
where now the sums run over all irreducible components $\mathcal{Z}$ of $\overline{\mathcal{W}} \cap \mathcal{Q}$, endowed with the reduced scheme structure, and all irreducible components $\mathcal{Q}'$ of $\mathcal{Q}$, endowed with the reduced scheme structure, respectively. Here, we have used again that the height is non-negative by Remark \ref{rmk:heightnonnegative}.

Since $\mathcal{Q}$ is defined by a homogeneous polynomial of degree at most $n\mathcal{C}(\mathcal{H})$, we have
\begin{equation}\label{eq:bounddegreesqprime}
    \sum_{\mathcal{Q}'}{\deg \mathcal{Q}'} \leq n\mathcal{C}(\mathcal{H}).
\end{equation}

It remains to bound $\sum_{\mathcal{Q}'}{h(\mathcal{Q}')}$ from above. Recall that every irreducible component $\mathcal{Q}'$ of $\mathcal{Q}$ dominates $\Spec \mathcal{O}_K$. It follows from Proposition  \ref{prop:heightestimates} that
\begin{equation}\label{eq:boundheightsqprime}
    \sum_{\mathcal{Q}'}{h(\mathcal{Q}')} \leq [K:\mathbb{Q}]h(Q) + c'\deg Q \leq [K:\mathbb{Q}]h(Q) + c'n \mathcal{C}(\mathcal{H})
\end{equation}
for a constant $c' = c'(K,n)$ where $h(Q)$ denotes the height of the vector of coefficients of $Q$, seen as a point in projective space. Since all coefficients of $Q$ belong to $\{0,\pm1\}$, we have $h(Q) = 0$. We now deduce the lemma by putting this together with \eqref{eq:boundheightsdegreesz}, \eqref{eq:bounddegreesqprime}, and \eqref{eq:boundheightsqprime}.
\end{proof}

We now prove Theorem \ref{thm:LIB}:

\begin{proof}[Proof of Theorem \ref{thm:LIB}]
Since $\dim \mathcal{V} > 0$ and therefore $\mathcal{H} \neq \mathbb{G}^n_{m,\mathcal{O}_K}$, we have $\mathcal{C}(\mathcal{H}) \geq 1$ by definition of the complexity. Let $C' = C'(K,n)$ denote the constant from Proposition \ref{prop:inductivestep}.

We will prove the following claim by induction on $k \in \{0,\hdots,\dim \mathcal{V}\}$: there exists a finite set $S_k$ of integral closed subschemes $\mathcal{Z} \subseteq \mathcal{V}$ such that:
\begin{enumerate}
    \item each irreducible component of $\mathcal{V} \cap \mathcal{H}$, endowed with the reduced scheme structure, is contained in some $\mathcal{Z} \in S_k$, 
    \item $\sum_{\mathcal{Z} \in S_k}{(h(\mathcal{Z})+\deg \mathcal{Z})} \leq C'^k(h(\mathcal{V})+\deg \mathcal{V})\mathcal{C}(\mathcal{H})^k$, and 
    \item either $\mathcal{Z}$ is an irreducible component of $\mathcal{V} \cap \mathcal{H}$, endowed with the reduced scheme structure, or $\dim \mathcal{Z} \leq \dim \mathcal{V}-k$.
\end{enumerate}

The base case $k = 0$ of the induction is trivial with $S_0 = \{\mathcal{V}\}$. Suppose that the claim holds for some $k \in \{0,\hdots,\dim \mathcal{V}-1\}$.

Let $\mathcal{Z} \in S_k$. We now define a set $S_{k+1,\mathcal{Z}}$ as follows:
if $\dim \mathcal{Z} \leq \dim \mathcal{V} - (k+1)$, we set $S_{k+1,\mathcal{Z}} = \{\mathcal{Z}\}$. If $\dim \mathcal{Z} = \dim \mathcal{V}-k$ and $\mathcal{Z} \subseteq \mathcal{H}$, then $\mathcal{Z} \subseteq \mathcal{V} \cap \mathcal{H}$. If $\mathcal{Z}$ does not contain any irreducible component of $\mathcal{V} \cap \mathcal{H}$, endowed with the reduced scheme structure, we set $S_{k+1,\mathcal{Z}} = \emptyset$. Otherwise, it follows that $\mathcal{Z}$ is equal to some irreducible component of $\mathcal{V} \cap \mathcal{H}$, endowed with the reduced scheme structure, and we set $S_{k+1,\mathcal{Z}} = \{\mathcal{Z}\}$. 

It remains to define $S_{k+1,\mathcal{Z}}$ in the case where $\dim \mathcal{Z} = \dim \mathcal{V}-k$ and $\mathcal{Z} \not \subseteq \mathcal{H}$.
We apply Proposition \ref{prop:inductivestep} with $\mathcal{W} = \mathcal{Z}$ and set $S_{k+1,\mathcal{Z}}$ to be the set $S$ of integral closed subschemes that Proposition \ref{prop:inductivestep} yields. 
Each element of $S$ has dimension at most $\dim \mathcal{Z}-1 = \dim \mathcal{V} - (k+1)$. 
Furthermore, each irreducible component of $\mathcal{V} \cap \mathcal{H}$ that is contained in $\mathcal{Z}$, endowed with the reduced scheme structure, is contained in some element of $S_{k+1,\mathcal{Z}}$ and
\[ \sum_{\mathcal{Y} \in S_{k+1,\mathcal{Z}}}{(h(\mathcal{Y}) + \deg \mathcal{Y})} \leq C'(h(\mathcal{Z})+\deg \mathcal{Z})\mathcal{C}(\mathcal{H}).\]

The induction step is now complete with
\[S_{k+1} = \bigcup_{\mathcal{Z} \in S_k}{S_{k+1,\mathcal{Z}}}\]
since $C' \geq 1$, $\mathcal{C}(\mathcal{H}) \geq 1$, and all heights and degrees are non-negative thanks to Remark \ref{rmk:heightnonnegative}. 

Finally, the claim for $k = \dim \mathcal{V}$ together with Remarks \ref{rmk:heightinclosedfiber} and \ref{rmk:heightnonnegative} implies Theorem \ref{thm:LIB}.
\end{proof}

\begin{rmk}
The proof of Theorem \ref{thm:LIB} directly yields the stronger conclusion that
\[\sum_{\mathcal{Z}}{h(\mathcal{Z}_{\red})} \leq C\mathcal{C}(\mathcal{H})^{\dim \mathcal{V}},\]
where the sum on the left-hand side now runs over all irreducible components of $\mathcal{V} \cap \mathcal{H}$ (including those that dominate the base). Note also that the proof does not really use the fact that $\dim \mathcal{V} +\dim \mathcal{H} \leq n+1$, but only that $\mathcal{H} \neq \mathbb{G}_{m,\mathcal{O}_K}^n$. If $\dim \mathcal{V} +\dim \mathcal{H} > n+1$, then the exponent of $\mathcal{C}(\mathcal{H})$ can even be improved to $\codim \mathcal{H}$.
\end{rmk}

\subsection{Application to multiplicative dependence with two independent relations modulo primes} \label{subsec:BCMOS}

We continue working in the setting of Section \ref{subsec:prooftrivialboundnomult}. In \cite[Theorem 2.1]{BCMOS_Preprint}, Barroero, Capuano, M\'erai, Ostafe, and Sha consider points on the reduction modulo a prime of a curve in $\mathbb{G}^n_{m,\mathbb{Q}}$ parametrized by non-zero multiplicatively independent rational functions. They show that, if the prime is large enough, such a point does not satisfy two independent multiplicative relations with small exponents unless it is the reduction of a point on the curve that satisfies two independent multiplicative relations in characteristic $0$. We now show how Proposition \ref{prop:inductivestep} from Section \ref{subsec:proof_LIB} can be used to prove a generalization of this result:

\begin{thm}\label{cor:curveingm}
Let $\mathcal{V} \subseteq \mathbb{G}^{n}_{m,\mathcal{O}_K}$ be a $2$-dimensional integral closed subscheme that dominates $\Spec \mathcal{O}_K$. Suppose that $\mathcal{V}_\eta$ is not contained in any proper algebraic subgroup of $\mathbb{G}^{n}_{m,K}$.

There exist a finite set $S = S(\mathcal{V})$ of $1$-dimensional integral closed subschemes $\mathcal{X} \subseteq \mathcal{V}$, flat over $\Spec \mathcal{O}_K$, and a constant $C = C(K,n,\mathcal{V})$ such that the following holds: let $(k_1,\hdots,k_n)$ and $(l_1,\hdots,l_n)$ be two linearly independent integer vectors, set $M = (\max_{i}{|k_i|})(\max_{i}{|l_i|})$, and let $\mathcal{H} \subseteq \mathbb{G}^n_{m,\mathcal{O}_K}$ be the subgroup scheme defined by the equations $\prod_{i=1}^{n}{X_i^{k_i}} = \prod_{i=1}^{n}{X_i^{l_i}} = 1$ in the affine coordinates $X_1,\hdots,X_n$ on $\mathbb{G}^n_{m,\mathcal{O}_K}$.

Then every irreducible component $\mathcal{Z}$ of $\mathcal{V} \cap \mathcal{H}$ that dominates $\Spec \mathcal{O}_K$ belongs to $S$ and furthermore
\[ \sum_{\mathcal{Z}}{\deg(\mathcal{Z}_{\red})\log N(\mathcal{Z})} \leq CM,\]
where the sum runs over all irreducible components $\mathcal{Z}$ of $\mathcal{V} \cap \mathcal{H}$ that are not contained in an element of $S$ and $\deg(\mathcal{Z}_{\red})$ and $N(\mathcal{Z})$ are as defined in Section \ref{subsec:prooftrivialboundnomult}.
\end{thm}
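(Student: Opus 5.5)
We apply Proposition \ref{prop:inductivestep} twice, once for each of the two relations, instead of once with the full complexity of $\mathcal{H}$; this is what produces the product $M$ in place of $\mathcal{C}(\mathcal{H})^2$. Write $\chi_k = \prod_i X_i^{k_i}$ and $\chi_l = \prod_i X_i^{l_i}$. Let $\mathcal{H}_k = \ker \chi_k$ and $\mathcal{H}_l = \ker \chi_l$; these are flat subgroup schemes with $\mathcal{C}(\mathcal{H}_k) \le \max_i|k_i|$ and $\mathcal{C}(\mathcal{H}_l) \le \max_i|l_i|$. Since $\mathcal{V}_\eta$ is not contained in a proper algebraic subgroup, $\mathcal{V} \not\subseteq \mathcal{H}_k$.

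First we apply Proposition \ref{prop:inductivestep} with $\mathcal{W} = \mathcal{V}$ and the subgroup $\mathcal{H}_k$. More precisely, we run its proof with the single equation $P = \chi_k - 1$. This gives a finite set $S_1$ of integral curves $\mathcal{Y} \subsetneq \mathcal{V}$, namely the irreducible components of $\mathcal{V} \cap \mathcal{H}_k$. Every component of $\mathcal{V}\cap\mathcal{H}$ lies in some $\mathcal{Y}\in S_1$, and
\[ \sum_{\mathcal{Y}\in S_1}(h(\mathcal{Y})+\deg\mathcal{Y}) \le C'(h(\mathcal{V})+\deg\mathcal{V})\max_i|k_i| . \]

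For each $\mathcal{Y} \in S_1$ there are two cases.
\begin{itemize}
\item If $\mathcal{Y} \not\subseteq \mathcal{H}_l$, we apply Proposition \ref{prop:inductivestep} again, to $\mathcal{W}=\mathcal{Y}$ and $\mathcal{H}_l$. This yields finitely many points whose heights sum to at most $C'(h(\mathcal{Y})+\deg \mathcal{Y})\max_i|l_i|$. Summing over $\mathcal{Y}$ and using Remark \ref{rmk:heightinclosedfiber} (for a closed point $\mathcal{Z}$ in a fiber, $h = \deg(\mathcal{Z}_{\red})\log N(\mathcal{Z})$) and Remark \ref{rmk:heightnonnegative} bounds the contribution of all such components by $C'^2(h(\mathcal{V})+\deg\mathcal{V})M$.
\item If $\mathcal{Y}\subseteq \mathcal{H}_l$, then $\mathcal{Y}\subseteq\mathcal{V}\cap\mathcal{H}$ is a one-dimensional component. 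If $\mathcal{Y}$ lies in a closed fiber, its contribution $h(\mathcal{Y}) = \deg(\mathcal{Y})\log N(\mathcal{Y})$ is already part of the sum, which is bounded by $C'(h(\mathcal{V})+\deg\mathcal{V})\max_i|k_i| \le C'(\cdots)M$.
\end{itemize}

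The remaining case is a flat curve $\mathcal{Y}$ contained in $\mathcal{H}$. Its generic fiber is a curve in $\mathcal{V}_\eta$ that lies in an algebraic subgroup of codimension $2$, because $k$ and $l$ are independent. So $\mathcal{Y}_\eta$ is an anomalous subvariety of $\mathcal{V}_\eta$: we have $\dim \mathcal{Y}_\eta = 1 > \dim\mathcal{V}_\eta + \dim(\text{subgroup}) - n$. By the structure theorem for anomalous subvarieties (Bombieri--Masser--Zannier), or directly by \cite[Th\'eor\`eme 1.2]{Maurin08} as cited after Theorem \ref{thm:LIB}, the union of such curves, as the codimension-$2$ subgroup varies, is not Zariski dense in the surface $\mathcal{V}_\eta$. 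It is therefore a finite union of curves. Their Zariski closures in $\mathcal{V}$ form the set $S$, which depends only on $\mathcal{V}$. This gives the first assertion: every flat component of $\mathcal{V}\cap\mathcal{H}$ lies in $S$.

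Components of $\mathcal{V}\cap\mathcal{H}$ contained in an element of $S$ are excluded from the sum. Every other component is either a point or a vertical curve, and both are handled by the cases above. Hence the bound holds with $C = C'^2(h(\mathcal{V})+\deg\mathcal{V})$.

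The main obstacle is the finiteness of $S$, which needs an unlikely-intersections input in characteristic $0$ uniformly over all codimension-$2$ subgroups. I would try Maurin's theorem first, since the paper already invokes it for this hypothesis. A smaller point to check is that the asymmetric use of the proposition really gives the product bound. It does, because $h(\mathcal{V})+\deg\mathcal{V}$ is fixed, so no constant is lost between the two applications.
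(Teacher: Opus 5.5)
Your overall strategy is the paper's: apply Proposition~\ref{prop:inductivestep} to $\mathcal{V}$ with the single relation $\prod_i X_i^{k_i}=1$, then apply it again with the second relation to each resulting one-dimensional component not contained in $\ker\chi_l$, handle vertical one-dimensional components of $\mathcal{V}\cap\mathcal{H}$ directly via Remark~\ref{rmk:heightinclosedfiber}, and exclude flat components through a finite set $S$. Your two-case split is a coarser version of the paper's partition of $S_0$, and the product $M$ arises exactly as you say.

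The step that needs repair is the finiteness of $S$, where your dimension count is wrong. Here $\dim\mathcal{V}=2$ is the absolute dimension, so $\mathcal{V}_\eta$ is a curve, not a surface. A flat one-dimensional $\mathcal{Y}\subseteq\mathcal{V}\cap\mathcal{H}$ therefore has as generic fiber a closed point $y$ of $\mathcal{V}_\eta$, not a curve. The anomalous-subvariety route then gives nothing: a point is never anomalous, so the Bombieri--Masser--Zannier structure theorem says nothing about such $y$.

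What you actually need is \cite[Th\'eor\`eme 1.2]{Maurin08} in its curve form. Since $\mathcal{V}_\eta$ is not contained in a proper algebraic subgroup, its intersection with the union of all algebraic subgroups of $\mathbb{G}^n_{m,K}$ of codimension at least $2$ is finite. The paper takes $S$ to be the Zariski closures in $\mathcal{V}$, with reduced structure, of these finitely many points. These closures are flat and one-dimensional, and $\mathcal{Y}_{\red}$, being the closure of a point of $\ker\chi_k\cap\ker\chi_l$, belongs to $S$.

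With this correction the rest of your argument goes through, including the constant $C=C'^2(h(\mathcal{V})+\deg\mathcal{V})$, which works because $C'\max_i|l_i|\geq 1$.
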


Compared with \cite[Theorem 2.1]{BCMOS_Preprint}, Theorem \ref{cor:curveingm} removes the restriction that the curve is parametrized by a vector $\boldsymbol{\varphi}$ of rational functions with rational coefficients and gives a bound for $\sum_{\mathcal{Z}}{\deg(\mathcal{Z}_{\red})\log N(\mathcal{Z})}$ instead of $\max_{\mathcal{Z}} \log N(\mathcal{Z})$. Note that our set $S$ essentially corresponds to the set $\mathcal{S}_1$ in \cite[Theorem 2.1]{BCMOS_Preprint} in the sense that $\mathcal{X} \in S$ if and only if the geometric generic fiber of $\mathcal{X}$ consists of images of elements of $\mathcal{S}_1$ under $\boldsymbol{\varphi}$.

The linear dependence on $M$ of the upper bound in Theorem \ref{cor:curveingm} is best possible as the example $n = 2$, $\mathcal{V}$ equal to the Zariski closure of $\{2\} \times_{K} \mathbb{G}_{m,K}$ in $\mathbb{G}^2_{m,\mathcal{O}_K}$, $k_2 = l_1 = 0$, $k_1 = 2$, and $l_2 \in \mathbb{N}$ not a multiple by $3$ shows, where the left-hand side evaluates to $l_2\log 3 = M(\log 3)/2$.

\begin{proof}[Proof of Theorem \ref{cor:curveingm}]
By \cite[Th\'eor\`eme 1.2]{Maurin08}, the intersection of $\mathcal{V}_\eta$ with the union of all algebraic subgroups of $\mathbb{G}^n_{m,K}$ of codimension $\geq 2$ is a finite set $X$. We set $S$ equal to the set of irreducible components of the Zariski closure of $X$ in $\mathbb{G}^n_{m,\mathcal{O}_K}$, each component endowed with the reduced scheme structure. By \cite[Proposition 14.14]{GoertzWedhorn}, all these components are flat over $\Spec \mathcal{O}_K$. Furthermore, they are all $1$-dimensional by Lemma \ref{lem:dimandcodim}~(1).

Let $\mathcal{H}$ be as in the statement of Theorem \ref{cor:curveingm}. We have that $\mathcal{V} \cap \mathcal{H} \subsetneq \mathcal{V}$ and that every irreducible component of $\mathcal{V} \cap \mathcal{H}$ that dominates $\Spec \mathcal{O}_K$, endowed with the reduced scheme structure, belongs to $S$. We apply Proposition \ref{prop:inductivestep} to $\mathcal{W} = \mathcal{V}$ and the flat subgroup scheme $\mathcal{H}_1$ of $\mathbb{G}^n_{m,\mathcal{O}_K}$ defined by the equation $\prod_{i=1}^{n}{X_i^{k_i}} = 1$. We have $\mathcal{V} \not\subseteq \mathcal{H}_1$ since $\mathcal{V}_\eta$ is not contained in any proper algebraic subgroup of $\mathbb{G}^{n}_{m,K}$. We get a finite set $S_0$ of integral closed subschemes $\mathcal{Z}_0 \subsetneq \mathcal{V}$ such that
\begin{equation}\label{eq:aramis}
\sum_{\mathcal{Z}_0 \in S_0}{(h(\mathcal{Z}_0)+\deg \mathcal{Z}_0)} \leq C'(h(\mathcal{V})+\deg \mathcal{V})\mathcal{C}(\mathcal{H}_1)
\end{equation}
for the constant $C' = C'(K,n) $ from Proposition \ref{prop:inductivestep}. Furthermore,
\[ \mathcal{Z}_{\red} \subseteq \bigcup_{\mathcal{Z}_0 \in S_0}{\mathcal{Z}_0}\]
for each irreducible component $\mathcal{Z}$ of $\mathcal{V} \cap \mathcal{H}_1$ and we may and will assume that each $\mathcal{Z}_0 \in S_0$ contains at least one such irreducible component $\mathcal{Z}_{\red}$. But, by \cite[Proposition 5.35]{GoertzWedhorn} and Lemma \ref{lem:dimandcodim}~(3), every such irreducible component is $1$-dimensional and so, again by Lemma \ref{lem:dimandcodim}~(3), every element of $S_0$ is a $1$-dimensional irreducible component of $\mathcal{V} \cap \mathcal{H}_1$, endowed with the reduced scheme structure.

We now partition $S_0$ into four sets $S_1, S_2, S_3, S_4$: the set $S_1$ contains all elements of $S_0$ that do not dominate $\Spec \mathcal{O}_K$ and are irreducible components of $\mathcal{V} \cap \mathcal{H}$, endowed with the reduced scheme structure; the set $S_2$ contains all elements of $S_0$ that do not dominate $\Spec \mathcal{O}_K$ and are not irreducible components of $\mathcal{V} \cap \mathcal{H}$, endowed with the reduced scheme structure; finally, we set $S_3 = S \cap S_0$ and $S_4 = S_0\backslash(S_1 \cup S_2 \cup S_3)$. We have
\[ \mathcal{Z}_{\red} \subseteq \bigcup_{\mathcal{Z}_0 \in S_1 \cup S_2 \cup S_4}{\mathcal{Z}_0}\]
for each irreducible component $\mathcal{Z}$ of $\mathcal{V} \cap \mathcal{H}$ that is not contained in an element of $S$.

Let now $\mathcal{H}_2$ denote the flat subgroup scheme of $\mathbb{G}^n_{m,\mathcal{O}_K}$ defined by the equation $\prod_{i=1}^{n}{X_i^{l_i}} = 1$ and let $\mathcal{Z} \in S_4$ be arbitrary. If $\mathcal{Z} \subseteq \mathcal{H}_2$, then $\mathcal{Z}$ must be an irreducible component of $\mathcal{V} \cap \mathcal{H}$, endowed with the reduced scheme structure, since $\mathcal{Z}$ is an irreducible component of $\mathcal{V} \cap \mathcal{H}_1$ and $\mathcal{H} = \mathcal{H}_1 \cap \mathcal{H}_2$. Moreover, the scheme $\mathcal{Z}$ dominates $\Spec \mathcal{O}_K$. Hence $\mathcal{Z} \in S_3$ in contradiction with $\mathcal{Z} \in S_4$. 

Thus, we must have $\mathcal{Z} \not\subseteq \mathcal{H}_2$. We now apply Proposition \ref{prop:inductivestep} to $\mathcal{W} = \mathcal{Z}$ and $\mathcal{H}_2$. We get a finite set $S_{4,\mathcal{Z}}$ of integral closed subschemes $\mathcal{Y} \subsetneq \mathcal{Z}$ such that each irreducible component of $\mathcal{Z} \cap \mathcal{H}_2$, endowed with the reduced scheme structure, is contained in some $\mathcal{Y} \in S_{4,\mathcal{Z}}$. Since $\mathcal{Y} \subsetneq \mathcal{Z} \subsetneq \mathcal{V}$, we must have $\dim \mathcal{Y} = 0$ for all $\mathcal{Y} \in S_{4,\mathcal{Z}}$ by Lemma \ref{lem:dimandcodim}~(3). Furthermore,
\begin{equation}\label{eq:athos}
\sum_{\mathcal{Y} \in S_{4,\mathcal{Z}}}{(h(\mathcal{Y})+\deg \mathcal{Y})} \leq C'(h(\mathcal{Z})+\deg \mathcal{Z})\mathcal{C}(\mathcal{H}_2).
\end{equation}

We can apply the same procedure to any $\mathcal{Z} \in S_2$: recall that $\mathcal{Z}$ is a $1$-dimensional irreducible component of $\mathcal{V} \cap \mathcal{H}_1$, endowed with the reduced scheme structure, but $\mathcal{Z}$ is not an irreducible component of $\mathcal{V} \cap \mathcal{H}$, endowed with the reduced scheme structure. Therefore $\mathcal{Z} \not\subseteq \mathcal{H}_2$ and we can apply Proposition \ref{prop:inductivestep} as we did above. We get a finite set $S_{2,\mathcal{Z}}$ of integral closed subschemes $\mathcal{Y} \subsetneq \mathcal{Z}$ such that each irreducible component of $\mathcal{Z} \cap \mathcal{H}_2$, endowed with the reduced scheme structure, is contained in some $\mathcal{Y} \in S_{4,\mathcal{Z}}$ and such that
\begin{equation}\label{eq:porthos}
\sum_{\mathcal{Y} \in S_{2,\mathcal{Z}}}{(h(\mathcal{Y})+\deg \mathcal{Y})} \leq C'(h(\mathcal{Z})+\deg \mathcal{Z})\mathcal{C}(\mathcal{H}_2).
\end{equation}
Again, it follows from $\mathcal{Y} \subsetneq \mathcal{Z} \subsetneq \mathcal{V}$ and Lemma \ref{lem:dimandcodim}~(3) that $\dim  \mathcal{Y} = 0$.

We set
\[ S_5 = S_1 \cup \bigcup_{\mathcal{Z} \in S_2}{S_{2,\mathcal{Z}}} \cup  \bigcup_{\mathcal{Z} \in S_4}{S_{4,\mathcal{Z}}}.\]
By construction, we have
\[ \mathcal{Z}_{\red} \subseteq \bigcup_{\mathcal{Z}_1 \in S_5}{\mathcal{Z}_1}\]
for each irreducible component $\mathcal{Z}$ of $\mathcal{V} \cap \mathcal{H}$ that is not contained in an element of $S$. But since all elements of $S_5$ are either irreducible components of $\mathcal{V} \cap \mathcal{H}$, endowed with the reduced scheme structure, or $0$-dimensional, it follows that for each irreducible component $\mathcal{Z}$ of $\mathcal{V} \cap \mathcal{H}$ that is not contained in an element of $S$, $\mathcal{Z}_{\red}$ is an element of $S_5$. Together with \eqref{eq:aramis}, \eqref{eq:athos}, \eqref{eq:porthos}, and the fact that $1 \leq \mathcal{C}(\mathcal{H}_1)\mathcal{C}(\mathcal{H}_2) \leq M$ (by definition of the complexity), this implies that
\[ \sum_{\mathcal{Z}}{h(\mathcal{Z}_{\red})} \leq C'^2(h(\mathcal{V})+\deg \mathcal{V})M,\]
where the sum runs over all irreducible components $\mathcal{Z}$ of $\mathcal{V} \cap \mathcal{H}$ that are not contained in an element of $S$. The corollary now follows from Remark \ref{rmk:heightinclosedfiber}.
\end{proof}

\section{Bounding the size of arithmetic unlikely intersections}\label{sec:mainconj}

Motivated by Theorem \ref{thm:LIB}, we formulate in this section a general conjecture about arithmetic unlikely intersections in powers of the arithmetic multiplicative group, which encompasses results of Bugeaud-Corvaja-Zannier \cite{Bugeaud_Corvaja_Zannier_2003} and Corvaja-Zannier \cite{Corvaja_Zannier_2005}. The following notation has been introduced in Sections \ref{sec:complexity} and \ref{sec:trivialbound} and will be in force for the rest of this section: let $K$ be a number field, let $\eta$ denote the generic point of $\Spec \mathcal{O}_K$, and let $n \in \mathbb{N}$. For $\mathcal{Z} \subseteq \mathbb{G}^n_{m,\mathcal{O}_K}$ an irreducible closed subscheme that does not dominate $\Spec \mathcal{O}_K$, we refer to Section \ref{subsec:prooftrivialboundnomult} for the definition of $N(\mathcal{Z})$ and $\deg \mathcal{Z}_{\red}$.

\begin{conj}[Unlikely Intersection Bound] \label{conj:mainconj}
Let $\mathcal{V} \subseteq \mathbb{G}_{m,\mathcal{O}_K}^n$ be an integral closed subscheme that dominates $\Spec \mathcal{O}_K$. Suppose that $V := \mathcal{V}_{\eta}$ is not contained in any proper algebraic subgroup of $\mathbb{G}^n_{m,K}$. For every $\varepsilon > 0$, there exists a constant $c = c(n,\mathcal{V},\varepsilon, K) \in \mathbb{R}$ such that for all flat subgroup schemes $\mathcal{H} \subseteq \mathbb{G}_{m,\mathcal{O}_K}^n$ which satisfy $\dim \mathcal{H} + \dim \mathcal{V} < n+1$ we have

\begin{equation}\label{eq:inequalityconjecture}
    \sum_{\mathcal{Z}}{\mult(\mathcal{Z})\deg(\mathcal{Z}_{\mathrm{\red}})\log N(\mathcal{Z})} \leq \varepsilon\mathcal{C}(\mathcal{H})^{\dim \mathcal{V}} + c,
\end{equation}
where the sum runs over all irreducible components $\mathcal{Z}$ of $\mathcal{V} \cap \mathcal{H}$ which do not dominate $\Spec \mathcal{O}_K$ and $\mult(\mathcal{Z})$ is as defined in Definition \ref{defn:multiplicity}.
\end{conj}

We will refer to the upper bound in this conjecture as the Unlikely Intersection Bound, abbreviated ULIB.

In Conjectures \ref{conj:LIB} and \ref{conj:mainconj}, we consider only those irreducible components of $\mathcal{V} \cap \mathcal{H}$ that do not dominate the base since we want to focus on intersections that are ``properly" arithmetic and this is the simplest way of doing so. According to the Zilber-Pink conjecture, \emph{i.e.} \cite[Conjecture 2]{Zilber}, \cite[Conjecture 1.1]{PinkUnpubl}, and \cite[Torsion Openness Conjecture and Torsion Finiteness Conjecture]{BMZ07}, we expect an open dense subscheme $\mathcal{U}$ of $\mathcal{V}$ to exist such that, for each flat subgroup scheme $\mathcal{H} \subseteq \mathbb{G}^n_{m,\mathcal{O}_K}$ which satisfies $\dim \mathcal{H} + \dim \mathcal{V} < n+1$, no irreducible component of $\mathcal{U} \cap \mathcal{H}$ dominates $\Spec \mathcal{O}_K$. We have not formulated Conjecture \ref{conj:mainconj} in this weaker form since it would make explicit computations much more complicated and the numerical evidence we have also supports the stronger Conjecture \ref{conj:mainconj}.

If $V$ is contained in a proper algebraic subgroup of $\mathbb{G}^n_{m,K}$, then \eqref{eq:inequalityconjecture} becomes false in general as the following example shows.

\begin{ex}
Take $n=2$, $\mathcal{H} = \ker [N]$ ($N \in \mathbb{N}$), and $\mathcal{V}$ equal to the Zariski closure of $\{(2,4)\} \subseteq \mathbb{G}^2_{m,\mathbb{Q}}(\mathbb{Q})$ inside $\mathbb{G}^2_{m,\mathbb{Z}}$. Then, by Remark \ref{rmk:lhszerodim}, Conjecture \ref{conj:mainconj} for $\mathcal{V}$ and $\mathcal{H}$ would imply the patently false inequality $\log(\gcd(2^N-1,4^N-1)) \leq N/2+c$ for some constant $c \in \mathbb{R}$ and all $N \in \mathbb{N}$.
\end{ex}

In contrast to the Zilber-Pink conjecture, we do not conjecture a uniform upper bound for the left-hand side of \eqref{eq:inequalityconjecture} that is independent of $\mathcal{H}$ since this would be false already because of the simple fact that every prime $p > 3$ divides $\gcd(2^{p-1}-1,3^{p-1}-1)$.

\subsection{The case $\dim \mathcal{V}=1$}
In this section, we show how some results from the literature, due to Corvaja and Zannier, imply Conjecture \ref{conj:mainconj} when $\dim \mathcal{V} = 1$. The following theorem is a direct consequence of their work. For the notation used in this theorem and its proof, we refer to Section \ref{subsec:generalities}.

\begin{thm}\label{thm:corvajazannier}
Let $\alpha_1, \hdots,\alpha_n \in K^{\ast}$ be multiplicatively independent and let $\varepsilon > 0$. Let $S$ denote the (finite) set of maximal ideals $\mathfrak{P}$ of $\mathcal{O}_K$ such that $\ord_{\mathfrak{P}}(\alpha_i) \neq 0$ for some $i \in \{1,\hdots,n\}$. Then there exists $c = c(K,\alpha_1,\hdots,\alpha_n,\varepsilon) > 0$ such that
\[    \log N(\gcd_{S}(\alpha_1^{k_1}\cdots \alpha_n^{k_n}-1,\alpha_1^{l_1}\cdots \alpha_n^{l_n}-1)) \leq \varepsilon \max_{i=1,\hdots,n}\{|k_i|,|l_i|\}+c\]
for all pairs of linearly independent vectors $(k_1, \hdots, k_n), (l_1, \hdots, l_n) \in \mathbb{Z}^n$.
\end{thm}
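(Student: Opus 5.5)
The statement says it is "a direct consequence" of Corvaja–Zannier's work, so the plan is to reduce to their main gcd theorem, \cite[Theorem 1]{Corvaja_Zannier_2005}. In the form I will use, that theorem says: for a finite set of places $S'$ of $K$ containing the archimedean ones, $u,v$ ranging over $S'$-units, and $\varepsilon>0$, we have
\[\log N(\gcd(u-1,v-1)) \leq \varepsilon \max\{h(u),h(v)\}\]
outside a finite union of translates of proper algebraic subgroups of $\mathbb{G}^2_m$. The exceptional set is given by finitely many relations $u^a v^b=\text{const}$ with $(a,b)\neq(0,0)$, and the gcd there is taken at finite places outside $S'$.

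Set $u=\alpha^{k}:=\alpha_1^{k_1}\cdots\alpha_n^{k_n}$ and $v=\alpha^l$. Take $S'$ to be the archimedean places together with $S$, so that $u,v$ are $S'$-units; this choice also matches $\gcd_S$ with the gcd of Corvaja–Zannier. I then compare heights. Since $h(\alpha^k)\leq \sum_i |k_i| h(\alpha_i)$, we get
\[\max\{h(u),h(v)\} \leq C_\alpha \max_i\{|k_i|,|l_i|\}.\]
Applying Corvaja–Zannier with $\varepsilon/C_\alpha$ in place of $\varepsilon$ gives the desired bound $\varepsilon\max\{|k_i|,|l_i|\}$ for all pairs $(u,v)$ outside the exceptional set. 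A minor point: Corvaja–Zannier bound $\log$ of the norm of the gcd ideal, possibly normalized by $[K:\mathbb{Q}]$, and this only changes constants.

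The main obstacle is the exceptional set. Such a pair satisfies $u^a v^b=\gamma$ for one of finitely many fixed $\gamma$ and $(a,b)\neq(0,0)$, so $\alpha^{ak+bl}=\gamma$.

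\begin{itemize}
\item If $\gamma$ is not in the group generated by the $\alpha_i$, there are no solutions.
\item Otherwise, by multiplicative independence of the $\alpha_i$, the vector $ak+bl$ equals a fixed vector $m_\gamma$.
\end{itemize}

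I handle these relations as follows. If $ak+bl=0$ with $(a,b)\neq 0$, then $k$ and $l$ are linearly dependent, which is excluded by hypothesis. So only finitely many affine relations $ak+bl=m\neq 0$ remain, but these can still have infinitely many solutions $(k,l)$.

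For these I use the identity
\[\gcd(u-1,v-1)\mid \gcd\bigl(u^a v^b-1,\; u-1,\; v-1\bigr) \mid (\gamma-1)\]
for $a,b\geq 0$. When $a$ or $b$ is negative, I multiply by $S$-units to reduce to this case; since $u,v$ are $S$-units, the gcd outside $S$ still divides $\gamma-1$. If $\gamma\neq 1$, this gives $\log N(\gcd_S)\leq \log N(\gamma-1)$, which is bounded independently of $(k,l)$ and so can be absorbed into $c$. The case $\gamma=1$ is exactly the case $ak+bl=0$ already excluded.

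I expect the delicate step to be checking that the exceptional set of \cite{Corvaja_Zannier_2005} really has this "finitely many relations $u^av^b=\gamma$" shape for $S'$-units, rather than a more complicated one. I would check this against their Theorem 1 and its Corollary, which state the exception precisely as a multiplicative dependence $u^av^b\in$ a finite set.
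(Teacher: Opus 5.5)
Your proposal is correct and follows the paper's route. Both set $x=\alpha_1^{k_1}\cdots\alpha_n^{k_n}$ and $y=\alpha_1^{l_1}\cdots\alpha_n^{l_n}$, apply the Corvaja--Zannier gcd bound for $S$-units with $\varepsilon$ rescaled by $[K:\mathbb{Q}]\sum_i h(\alpha_i)$, and use $\max\{h(x),h(y)\}\le(\sum_i h(\alpha_i))\max_i\{|k_i|,|l_i|\}$.

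The only difference is the version of Corvaja--Zannier you cite. The paper quotes Version~3 of \cite[Theorem 2.1]{ZannierBook}, which already holds for all multiplicatively independent $S$-units with an additive constant. You start from the exceptional-translates form and remove the translates yourself, using $\gamma-1\in(u-1,v-1)\mathcal{O}_{K,S}$ and the linear independence of $k,l$; this is a valid derivation of that version. Your reading of the gcd as taken only at places outside $S'$ is harmless: the Corvaja--Zannier gcd is summed over at least as many places, so it dominates $\log N(\gcd_S)/[K:\mathbb{Q}]$.
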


\begin{proof}
Set
\[ \varepsilon' = [K:\mathbb{Q}]^{-1}\left(\sum_{i=1}^{n}{h(\alpha_i)}\right)^{-1}\varepsilon.\]
Version 3 of \cite[Theorem 2.1]{ZannierBook} (which follows from the results in \cite{Corvaja_Zannier_2005}) says that there is a constant $c' = c'(K,S,\varepsilon')$ such that for multiplicatively independent $x,y \in \mathcal{O}_{K,S}^{\ast}$ we have:
\begin{equation}\label{eq:asymptotic}
    h_{\mathrm{gcd}}(x-1,y-1) \leq \varepsilon'\max\{h(x),h(y)\}+c'.
\end{equation}
Here $h_{\mathrm{gcd}}(x-1,y-1)$ is defined to be
\[\sum_{v \in M_K}{\min\{\log^+|(x-1)^{-1}|_v,\log^+|(y-1)^{-1}|_v\}}.\]

Since $\alpha_1,\hdots,\alpha_n$ are multiplicatively independent $S$-units and $(k_1, \hdots, k_n), (l_1, \hdots, l_n)$ are linearly independent, we may choose $x = \alpha_1^{k_1}\cdots \alpha_n^{k_n}$ and $y = \alpha_1^{l_1}\cdots \alpha_n^{l_n}$ as multiplicatively independent elements of $\mathcal{O}_{K,S}^{\ast}$.

If $\mathfrak{P}$ is a maximal ideal of $\mathcal{O}_K$ that does not lie in $S$, then
\[\min\{\log^+|(x-1)^{-1}|_{\mathfrak{P}},\log^+|(y-1)^{-1}|_{\mathfrak{P}}\} = [K:\mathbb{Q}]^{-1}\ord_{\mathfrak{P}}(\gcd_{S}(x-1,y-1))\log N(\mathfrak{P}).\]
Therefore, it follows from \eqref{eq:asymptotic} that
\[\log N(\gcd_{S}(x-1,y-1)) \leq [K:\mathbb{Q}](\varepsilon'\max\{h(x),h(y)\}+c').\]

The theorem now follows from
\[ \max\{h(x),h(y)\} \leq \max\left\{\sum_{i=1}^{n}{|k_i|h(\alpha_i)},\sum_{i=1}^{n}{|l_i|h(\alpha_i)}\right\} \leq \left(\sum_{i=1}^{n}{h(\alpha_i)}\right)\max_{i=1,\hdots,n}\{|k_i|,|l_i|\}.\]

\end{proof}

We now use Theorem \ref{thm:corvajazannier} to prove our Conjecture \ref{conj:mainconj} when $\dim \mathcal{V} = 1$.

\begin{thm} \label{thm:ULIB_dim1}
Conjecture \ref{conj:mainconj} holds if $\dim \mathcal{V} = 1$.
\end{thm}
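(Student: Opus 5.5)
Since $\dim \mathcal{V} = 1$ and $\mathcal{V}$ dominates $\Spec \mathcal{O}_K$, the generic fiber $V = \mathcal{V}_\eta$ is a single closed point of $\mathbb{G}^n_{m,K}$. It corresponds to a $\mathrm{Gal}(\overline{K}/K)$-orbit of a point $(\alpha_1,\hdots,\alpha_n) \in (L^\ast)^n$, where $L = K(\alpha_1,\hdots,\alpha_n)$.

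\emph{Reduction to an order in $L$.} The hypothesis that $V$ lies in no proper algebraic subgroup forces $\alpha_1,\hdots,\alpha_n$ to be multiplicatively independent. Indeed, a relation $\prod \alpha_i^{m_i}=1$ is satisfied by every Galois conjugate, so $V$ would lie in the $K$-subgroup $\prod X_i^{m_i}=1$. Since $\mathcal{V}$ is integral and affine, we have $\mathcal{V} = \Spec A$ with $A = \mathcal{O}_K[\alpha_1^{\pm1},\hdots,\alpha_n^{\pm1}] \subseteq L$.

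\emph{Reduction to two relations.} For $\mathcal{H}$ flat with $\dim \mathcal{H} + 1 < n+1$, Proposition \ref{prop:complexityflat} gives $\dim \mathcal{H}_\eta \leq n-2$. I would choose endomorphisms realizing $\mathcal{C}(\mathcal{H})$ in Definition \ref{defn:complexity}. Their rows then span a lattice of rank $\geq 2$ and have entries bounded by $\mathcal{C}(\mathcal{H})$, so I can pick two linearly independent rows $k,l$. Then $\mathcal{V} \cap \mathcal{H} = \Spec A/I$, where $I$ is the ideal generated by all $\alpha^{m}-1$ with $m$ a row. No component dominates the base, by multiplicative independence, so $A/I$ is finite. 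By Remark \ref{rmk:lhszerodim}, the left-hand side of \eqref{eq:inequalityconjecture} equals $\log|A/I|$.

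\emph{Passing to the normalization.} Let $S$ be the set of primes of $\mathcal{O}_L$ where some $\alpha_i$ has nonzero order, and let $B = \mathcal{O}_{L,S}$. This is the normalization of $A$ and is finite over $A$, so the conductor $\mathfrak{f} = \{x \in A : xB \subseteq A\}$ is a nonzero ideal with $\mathfrak{f}B \subseteq A$. I would split
\[ |A/I| = |A/(IB\cap A)|\cdot|(IB\cap A)/I|. \]
The first factor is at most $|B/IB|$, since $A/(IB\cap A)$ injects into $B/IB$. For the second factor, note that $\mathfrak{f}\cdot IB = I\cdot \mathfrak{f}B \subseteq IA = I$, so $\mathfrak{f}IB \subseteq I \subseteq IB \cap A \subseteq IB$. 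Hence the second factor is at most $|IB/\mathfrak{f}IB|$. Because $B$ is Dedekind and $IB$ is invertible, $IB/\mathfrak{f}IB \cong B/\mathfrak{f}B$. This is a finite group independent of $\mathcal{H}$, so it contributes only to the constant $c$.

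\emph{Applying Corvaja--Zannier.} It remains to bound $\log|B/IB| \leq \log N(\gcd_S(\alpha^{k}-1,\alpha^{l}-1))$, where the norm is the absolute norm from $L$. Theorem \ref{thm:corvajazannier}, applied over $L$ to the multiplicatively independent $\alpha_i$, bounds this by $\varepsilon\max\{|k_i|,|l_i|\} + c \leq \varepsilon\,\mathcal{C}(\mathcal{H}) + c$. This is exactly \eqref{eq:inequalityconjecture} with $\dim \mathcal{V}=1$.

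I expect the main obstacle to be the step accounting for multiplicities, i.e.\ comparing $A/I$ with $B/IB$ when $\mathcal{V}$ is not normal. The conductor argument above is my intended fix. I would double-check that the normalization of $A$ really is $\mathcal{O}_{L,S}$ with exactly this $S$, and that the primes in $S$ cause no trouble. At those primes the $\alpha_i$ are not units in $B$, but they are units in $A$, so the primes of $A$ above $S$ are absent from $\Spec A$ as well.
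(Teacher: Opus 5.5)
Your proof is correct and follows the paper's argument: you identify $\mathcal{V}$ with $\Spec \mathcal{O}_K[\alpha_1^{\pm 1},\hdots,\alpha_n^{\pm 1}]$ for multiplicatively independent $\alpha_i$, extract two linearly independent relations with entries at most $\mathcal{C}(\mathcal{H})$, compare with $\mathcal{O}_{L,S}$, and apply Theorem~\ref{thm:corvajazannier} over $L$. The only difference is how the non-normality of $A$ is absorbed. The paper uses the finiteness of the additive index $[\mathcal{O}_{L,S}:A]$ and the fact that $J_{\mathcal{H}}=(\alpha^{k}-1,\alpha^{l}-1)$ has two generators, giving $|A/J_{\mathcal{H}}|\leq |\mathcal{O}_{L,S}/J_{\mathcal{H}}\mathcal{O}_{L,S}|\cdot[\mathcal{O}_{L,S}:A]^2$; your conductor argument instead gives the constant $|B/\mathfrak{f}|$ and works for the full ideal $I$ regardless of how many generators it has.
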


\begin{proof}

Recall that $V = \mathcal{V}_\eta$. Both $\mathcal{V}$ and $V$ are affine. Since $\mathcal{V}$ is integral, the ring $\mathcal{O}_{\mathcal{V}}(\mathcal{V})$ is an integral domain. Since both $\mathcal{V}$ and $\Spec K$ are flat over $\Spec \mathcal{O}_K$ by \cite[Proposition 14.14]{GoertzWedhorn}, we have a natural inclusion $\mathcal{O}_{\mathcal{V}}(\mathcal{V}) \subseteq L:= \mathcal{O}_V(V) = \mathcal{O}_{\mathcal{V}}(\mathcal{V}) \otimes_{\mathcal{O}_K} K$ and we deduce that $L$ is an integral domain. Hence, by \cite[Corollary 5.21]{GoertzWedhorn}, the ring $L$ is a number field containing $K$.

By the integrality of $\mathcal{V}$, the ring $\mathcal{O}_{\mathcal{V}}(\mathcal{V})$ is a quotient of $\mathcal{O}_{K}[X_1^{\pm 1},\hdots,X_n^{\pm 1}]$ by a prime ideal. Let $\alpha_i$ denote the image of $X_i$ in $L^{\ast}$ for $i = 1,\hdots,n$, then we obtain an isomorphism
\[ \mathcal{V} \simeq \Spec \mathcal{O}_{K}[\alpha_1^{\pm 1},\hdots,\alpha_n^{\pm 1}],\]
where the $\alpha_i$ are multiplicatively independent since $V$ is not contained in any proper algebraic subgroup of $\mathbb{G}_{m,K}^n$.

Let $\mathcal{H}$ be any flat subgroup scheme such that $\dim \mathcal{H} + \dim \mathcal{V} < n+1$, \emph{i.e.} such that $\dim \mathcal{H} \leq n-1$. There exist linearly independent vectors $(k_1,\hdots,k_n)$, $(l_1,\hdots,l_n) \in \mathbb{Z}^n$ such that
\[ \mathcal{H} \subseteq \ker(X_1^{k_1}\cdots X_n^{k_n}, X_1^{l_1} \cdots X_n^{l_n})\]
and
\begin{equation}\label{eq:complexitykili}
\max_{i=1,\hdots,n}\{|k_i|,|l_i|\} \leq \mathcal{C}(\mathcal{H}).
\end{equation}

The scheme $\mathcal{V} \cap \mathcal{H}$ is isomorphic to $\Spec R_{\mathcal{H}}$, where $R_{\mathcal{H}}$ is a quotient of the ring
\[ \mathcal{O}_{K}[\alpha_1^{\pm 1},\hdots,\alpha_n^{\pm 1}]/J_\mathcal{H}\]
and $J_\mathcal{H}:= (\alpha_1^{k_1}\cdots \alpha_n^{k_n}-1,\alpha_1^{l_1}\cdots \alpha_n^{l_n}-1) $. Let $S$ denote the set of all maximal ideals $\mathfrak{p}$ of $\mathcal{O}_L$ such that $\ord_{\mathfrak{p}}(\alpha_i) \neq 0$ for some $i \in \{1,\hdots,n\}$. We find that $\mathcal{O}_{K}[\alpha_1^{\pm 1},\hdots,\alpha_n^{\pm 1}] \subseteq \mathcal{O}_{L,S}$. Since the ring $\mathcal{O}_{L,S'}$ is integrally closed for every set $S'$ of maximal ideals of $\mathcal{O}_L$ and $S$ is the minimal such set $S'$ with respect to inclusion such that $\mathcal{O}_{K}[\alpha_1^{\pm 1},\hdots,\alpha_n^{\pm 1}] \subseteq \mathcal{O}_{L,S'}$, it follows from \cite[Lemma 2.1 and the following paragraph and Theorem 2.7]{Beaumont_Pierce_1961} that
\[I := [\mathcal{O}_{L,S}:\mathcal{O}_{K}[\alpha_1^{\pm 1},\hdots,\alpha_n^{\pm 1}]] < \infty.\]

Since $J_\mathcal{H}$ is generated by two elements, we have 
\[
\left |\mathcal{O}_{K}[\alpha_1^{\pm 1},\hdots,\alpha_n^{\pm 1}]/J_\mathcal{H} \right| \leq  \left | \mathcal{O}_{L,S} /  J_\mathcal{H} \mathcal{O}_{L,S}  \right | \cdot \left | J_\mathcal{H} \mathcal{O}_{L,S} / J_\mathcal{H} \right | \leq  \left | \mathcal{O}_{L,S} /  J_\mathcal{H} \mathcal{O}_{L,S}  \right | \cdot I^2 < \infty.
\]
Thanks to Remark \ref{rmk:lhszerodim}, the left-hand side in Conjecture \ref{conj:mainconj} is therefore bounded from above by
\[ \log \left|\mathcal{O}_{L,S}/ J_\mathcal{H} \mathcal{O}_{L,S} \right|+2\log I,\]
where the second summand is now independent of $\mathcal{H}$ and the first summand equals
\[ \log N(\gcd_{S}(\alpha_1^{k_1}\cdots \alpha_n^{k_n}-1,\alpha_1^{l_1}\cdots \alpha_n^{l_n}-1)).\]
Conjecture \ref{conj:mainconj} now follows from Theorem \ref{thm:corvajazannier} and \eqref{eq:complexitykili}.
\end{proof}

\begin{rmk}\label{rmk:LIB_dim1}
Arguing along the same lines as in the proof of Theorem \ref{thm:ULIB_dim1}, one can reduce the $1$-dimensional case of Conjecture \ref{conj:LIB} to the bound
\[ \log N((\alpha_1^{k_1}\cdots \alpha_n^{k_n}-1)\mathcal{O}_{L,S}) \leq C\max_{i=1, \hdots, n} |k_i|,\]
which follows from 
\[ \log N((\alpha_1^{k_1}\cdots \alpha_n^{k_n}-1)\mathcal{O}_{L,S}) \leq [L:\mathbb{Q}]h(\alpha_1^{k_1}\cdots \alpha_n^{k_n}-1) \leq [L:\mathbb{Q}]\left(\log 2 + \sum_{i=1}^{n}{|k_i|h(\alpha_i)}\right).\]
\end{rmk}

\subsection{The case $\dim \mathcal{V}=2$}

Going beyond the case where $\dim \mathcal{V} = 1$, we will show in this section that when $\dim \mathcal{V} = 2$, $n \geq 3$, and $\mathcal{H} = \ker [N]$ for some $N \in \mathbb{N}$, then the terms $\log N(\mathcal{Z})$ appearing in the sum on the left-hand side of \eqref{eq:inequalityconjecture} cannot grow faster than linearly in $N = \mathcal{C}(\mathcal{H})$ (see Lemma \ref{lem:complexitymultbyN}). If $\mathcal{Z}$ does not arise as the reduction of an unlikely intersection in characteristic $0$, we even get a power saving. If furthermore $n = 2$, then the intersection $\mathcal{V} \cap \mathcal{H}$ is likely and Theorem \ref{cor:curveingm} gives an upper bound for $\log N(\mathcal{Z})$, but we also manage to improve on this upper bound for $\log N(\mathcal{Z})$, replacing $\mathcal{O}(N^2)$ with $\mathcal{O}(N^{3/2})$.

\begin{thm}\label{thm:singleprimecurve}
Suppose that $n \geq 2$. Let $\mathcal{V} \subseteq \mathbb{G}^n_{m,\mathcal{O}_K}$ be an integral closed subscheme of dimension $2$ that dominates $\Spec \mathcal{O}_K$. Suppose that $\mathcal{V}_{\eta}$ is not contained in any proper algebraic subgroup of $\mathbb{G}^n_{m,K}$. There exists $C = C(K,n,\mathcal{V})$ such that the following holds:

Suppose that $N \in \mathbb{N}$, $\mathfrak{P}$ is a maximal ideal of $\mathcal{O}_K$, and $x \in \mathcal{V}_{\mathfrak{P}} \cap \ker [N]$. Then one of the following holds:
\begin{enumerate}
    \item $\log N(\mathfrak{P}) \leq CN^{\frac{2n-1}{n(n-1)}}$, or
    \item there exists $y \in \mathcal{V}_\eta$ such that $y$ is contained in an algebraic subgroup of $\mathbb{G}^n_{m,K}$ of codimension at least $2$ and $x$ is contained in the Zariski closure of $y$ in $\mathcal{V}$.
\end{enumerate}

In both cases, we have that
\[\log N(\mathfrak{P}) \leq \begin{cases} CN^{\frac{3}{2}} &\text{if } n = 2,\\
CN &\text{if } n \geq 3,
\end{cases}\]
unless $y$ can be chosen inside $\ker [N]$.
\end{thm}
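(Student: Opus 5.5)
The plan is to produce, from the single condition $x^N = 1$, two independent multiplicative relations with small exponents. I will then apply Theorem~\ref{cor:curveingm} to the subgroup scheme they cut out, and handle the exceptional set $S$ by a direct height estimate.

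\emph{Step 1: short relations.} Let $k$ be the residue field of $x$; it is a finite extension of $\mathbb{F}_{\mathfrak{P}}$. The coordinates $x_1,\hdots,x_n \in k^\ast$ satisfy $x_i^N=1$, so they lie in a cyclic group $\langle g\rangle$ of order $d \mid N$. Write $x_i = g^{e_i}$. The relation lattice
\[\Lambda = \{(k_1,\hdots,k_n) \in \mathbb{Z}^n : \textstyle\sum_i k_ie_i \equiv 0 \bmod d\}\]
has index at most $d \leq N$ in $\mathbb{Z}^n$. Let $\lambda_1 \leq \cdots \leq \lambda_n$ be its successive minima for $\lVert\cdot\rVert_\infty$. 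Minkowski's second theorem gives $\lambda_1\cdots\lambda_n \leq c(n)N$, and therefore $\lambda_1\lambda_2 \leq (c(n)N)^{2/n}$. So there are linearly independent $(k_i),(l_i)\in\Lambda$ with $M = \max|k_i|\max|l_i| \ll N^{2/n}$. This exponent is at most $\frac{2n-1}{n(n-1)}$, so the loss in the statement leaves room if a cruder lattice argument is needed. Let $\mathcal{H}$ be the subgroup scheme defined by these two relations. Then $x \in \mathcal{V}\cap\mathcal{H}$, and since $\mathcal{V}_\eta$ is not contained in a proper algebraic subgroup, Theorem~\ref{cor:curveingm} applies to $\mathcal{H}$.

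\emph{Step 2: the dichotomy.} Let $\mathcal{Z}$ be an irreducible component of $\mathcal{V}\cap\mathcal{H}$ through $x$. If $\mathcal{Z}$ is not contained in an element of the finite set $S$ from Theorem~\ref{cor:curveingm}, then $\mathcal{Z}$ lies in the fiber over $\mathfrak{P}$. Since $\deg(\mathcal{Z}_{\red}) \geq 1$, we get $\log N(\mathfrak{P}) \leq CM \leq C'N^{\frac{2n-1}{n(n-1)}}$, which is case (1). Otherwise $x$ lies on some $\mathcal{X} \in S$. By construction, $\mathcal{X}$ is a component of the closure of a point $y$ of the finite set $X = \mathcal{V}_\eta \cap \bigcup\{\text{subgroups of codim}\geq 2\}$, given by \cite[Th\'eor\`eme 1.2]{Maurin08}. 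This is case (2).

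\emph{Step 3: the final bound.} In case (1) we already have $N^{\frac{2n-1}{n(n-1)}}$. This equals $N^{3/2}$ when $n=2$ and is at most $N^{5/6}\leq N$ when $n\geq 3$. In case (2), $y$ ranges over the finite set $X$, and each coordinate $y_i$ lies in a fixed number field $L$. The condition that $x$ reduces from $y$ with $x^N=1$ means a prime of $L$ above $\mathfrak{P}$ divides $y_i^N-1$ for every $i$ (after discarding the finitely many primes where some $y_i$ is not a unit; these contribute to the constant). If $y \notin \ker[N]$, choose $i$ with $y_i^N\neq 1$. Then
\[\log N(\mathfrak{P}) \leq [L:\mathbb{Q}]\,h(y_i^N-1) \leq [L:\mathbb{Q}](\log 2 + N h(y_i)) \ll N.\]

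The step I expect to be the main obstacle is Step 1, specifically making sure that Minkowski's theorem really delivers two \emph{linearly independent} short vectors whose product of sup-norms is controlled. Matching this with the quantity $M$ of Theorem~\ref{cor:curveingm} may cost some power and explain the weaker exponent in the statement. A second point to check is whether the component through $x$ can be a one-dimensional vertical component. Theorem~\ref{cor:curveingm} still bounds such components, since its sum includes every component not contained in an element of $S$.
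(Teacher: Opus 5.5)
Your proof is correct, but it reaches the dichotomy by a genuinely different route from the paper's.

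\emph{The paper's route.} It never invokes Theorem~\ref{cor:curveingm} here. It finds the relations one at a time by pigeonhole. First it takes $k$ with $\lVert k\rVert_\infty\le N^{1/n}$ and applies Proposition~\ref{prop:inductivestep} to get an integral $\mathcal{Z}\ni x$ with $h(\mathcal{Z})+\deg\mathcal{Z}\ll N^{1/n}$; it stops if $\mathcal{Z}$ is vertical. Otherwise, with $k_1\neq0$, it takes $l$ with $l_1=0$ and $\lVert l\rVert_\infty\le N^{1/(n-1)}$, which is automatically independent of $k$. Then either $\mathcal{Z}\subseteq\ker(X^{l})$, which is alternative (2) directly, or a second application of Proposition~\ref{prop:inductivestep} gives $\log N(\mathfrak{P})\le h(\{x\})\ll N^{\frac1n+\frac1{n-1}}$.

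\emph{Your route.} You get both relations at once from Minkowski's second theorem, $\lambda_1\lambda_2\le(\lambda_1\cdots\lambda_n)^{2/n}\le N^{2/n}$, and then use Theorem~\ref{cor:curveingm} as a black box; its proof is the same two-step use of Proposition~\ref{prop:inductivestep}. Your worry about losing a power is unfounded: the successive minima are attained by linearly independent vectors, so $M=\lambda_1\lambda_2$ exactly.

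\emph{What each buys.} Your route gives a strictly better exponent, since $\frac2n<\frac1n+\frac1{n-1}$, so alternative (1) holds with $N^{2/n}$. In particular, for $n=2$ you get $\log N(\mathfrak{P})\le CN$ instead of $CN^{3/2}$. In that case the $y$ of alternative (2) is torsion, so your Step~3 gives only a constant, and the final bound becomes linear for every $n\ge2$. The paper's route is more self-contained. Its alternative (2) comes straight from the inclusion $\mathcal{Z}\subseteq\ker(X^{l})$ with no exceptional set, so Maurin's theorem enters only in the last paragraph. You already need Maurin's theorem for the dichotomy, through the set $S$ of Theorem~\ref{cor:curveingm}. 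Your Step~3 is essentially the paper's application of Theorem~\ref{thm:LIB} to the closures of the finitely many exceptional points (cf.\ Remark~\ref{rmk:LIB_dim1}).
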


In less formal or precise terms, one should think of the second alternative as: ``$x$ is obtained as the reduction of $y$ modulo $\mathfrak{P}$ and $y$ arises from an unlikely intersection in the generic fiber".

The set of $y \in \mathcal{V}_\eta$ that are contained in an algebraic subgroup of $\mathbb{G}^n_{m,K}$ of codimension at least $2$ is finite by \cite[Th\'eor\`eme 1.2]{Maurin08}. If $x$ is the reduction of such a $y$, it is probably not possible in general to obtain an upper bound for $\log N(\mathfrak{P})$ that is better than linear in $N$: if for example $K=\mathbb{Q}$, $n = 3$, and $\mathcal{V}$ is defined by the equations $Y - (X+1) = Z - (X+3) = 0$ in the coordinates $X,Y,Z$ on $\mathbb{G}^3_{m,\mathbb{Z}}$, then for any Mersenne prime $p = 2^N-1$, the reduction of the point $(1,2,4) \in \mathcal{V}(\mathbb{Q})$ modulo $p$ has order dividing $N$.

\begin{proof}
We let $\overline{K}$ denote a fixed algebraic closure of $K$ and we let $\overline{\mathbb{F}}_{\mathfrak{P}}$ denote a fixed algebraic closure of $\mathbb{F}_{\mathfrak{P}}$. We set $G = G_N = \{z \in \overline{\mathbb{F}}_{\mathfrak{P}};\, z^N = 1\} \subseteq \overline{\mathbb{F}}_{\mathfrak{P}}^{\ast}$. This is a cyclic group of order dividing $N$ and we fix a generator $\xi \in G$. Every point $x$ as in the theorem is the image of an $\overline{\mathbb{F}}_{\mathfrak{P}}$-point $(x_1,\hdots,x_n) \in (\ker [N])(\overline{\mathbb{F}}_{\mathfrak{P}})$. We deduce that there exist $u_i \in \mathbb{N}$ such that $x_i = \xi^{u_i}$ ($i = 1,\hdots,n$).

We now set $q_1 = \lfloor N^{\frac{1}{n}}\rfloor$ and consider the $(q_1+1)^{n}$ integers $\sum_{i=1}^{n}{r_iu_i}$, $r_i \in \{0,\hdots,q_1\}$. Since $(q_1+1)^n > N$, two of them have to be equal modulo $N$ by the pigeon-hole principle. By taking their difference, we deduce that there exist $k_1,\hdots,k_n \in \{-q_1,\hdots,q_1\}$, not all zero, such that $\prod_{i=1}^{n}{x_i^{k_i}} = 1$. After renumbering the coordinates, we can assume without loss of generality that $k_1 \neq 0$.

Set $\mathcal{H} = \ker(\prod_{i=1}^{n}{X_i^{k_i}})$. By construction, we have that $x \in \mathcal{V} \cap \mathcal{H}$. We have $\mathcal{V} \not\subseteq \mathcal{H}$ since the generic fiber of $\mathcal{V}$ is not contained in any proper algebraic subgroup of $\mathbb{G}^n_{m,K}$. By Proposition \ref{prop:inductivestep} applied with $\mathcal{W} = \mathcal{V}$, there exist a constant $C_1 = C_1(K,n) \geq 1$ and an integral closed subscheme $\mathcal{Z} \subsetneq \mathcal{V}$ such that $x \in \mathcal{Z}$ and
\[ h(\mathcal{Z})+\deg \mathcal{Z} \leq C_1(h(\mathcal{V}) + \deg \mathcal{V})\mathcal{C}(\mathcal{H}) \leq C_1(h(\mathcal{V}) + \deg \mathcal{V})q_1.\]
We deduce that there exists a constant $C_2 = C_2(K,n,\mathcal{V})$ such that
\begin{equation}\label{eq:heightupperboundforz}
    h(\mathcal{Z})+\deg \mathcal{Z} \leq C_2q_1.
\end{equation}

We distinguish two cases: first, $\mathcal{Z}$ could be contained in $\mathbb{G}^n_{m,\mathbb{F}_{\mathfrak{P}}}$. It then follows from Remark \ref{rmk:heightinclosedfiber} and $\deg \mathcal{Z} \geq 1$ that $\log N(\mathfrak{P}) \leq h(\mathcal{Z})$. This proves the theorem thanks to \eqref{eq:heightupperboundforz} and $q_1 \leq N^{\frac{1}{n}}$. We can therefore assume that $\mathcal{Z}$ dominates $\Spec \mathcal{O}_K$. In particular, $\dim \mathcal{Z} = 1$.

Set $q_2 = \lfloor N^{\frac{1}{n-1}}\rfloor$. By a similar argument with the pigeon-hole principle as above, we find $l_2,\hdots,l_n \in \{-q_2,\hdots,q_2\}$, not all zero, such that $\prod_{i=2}^{n}{x_i^{l_i}} = 1$. Set $\mathcal{H}' = \ker(\prod_{i=2}^{n}{X_i^{l_i}})$.

If $\mathcal{Z} \subseteq \mathcal{H}'$, then $\mathcal{Z}_\eta$ is contained in an algebraic subgroup of $\mathbb{G}^n_{m,K}$ of codimension at least $2$. Moreover, since $\mathcal{Z}$ is integral, we have that $\mathcal{Z}_\eta = \{y\}$ for some $y \in \mathcal{V}_\eta$ and so we are done.

Otherwise, every irreducible component of $\mathcal{Z} \cap \mathcal{H}'$ is properly contained in $\mathcal{Z} \subsetneq \mathcal{V}$ and so we must have $\dim(\mathcal{Z} \cap \mathcal{H}') = 0$ by Lemma \ref{lem:dimandcodim}~(3). By construction, $x \in \mathcal{Z} \cap \mathcal{H}'$ and so $\{x\}$ is an irreducible component of $\mathcal{Z} \cap \mathcal{H}'$.

As above, Proposition \ref{prop:inductivestep} implies that there exists an integral closed subscheme $\mathcal{Z}' \subsetneq \mathcal{Z}$ such that $x \in \mathcal{Z}'$ and
\[ h(\mathcal{Z}')+\deg \mathcal{Z}' \leq C_1(h(\mathcal{Z}) + \deg \mathcal{Z})\mathcal{C}(\mathcal{H}') \leq C_1(h(\mathcal{Z}) + \deg \mathcal{Z})q_2.\]
It follows from $\dim \mathcal{Z} = 1$ and Lemma \ref{lem:dimandcodim}~(3) that $\mathcal{Z}' = \{x\}$.

Combining the above inequality with \eqref{eq:heightupperboundforz}, we obtain that
\[ h(\{x\}) \leq C_1C_2q_1q_2.\]
It then follows from Remark \ref{rmk:heightinclosedfiber} and $\deg \{x\} \geq 1$ that
\[ \log N(\mathfrak{P}) \leq h(\{x\}) \leq C_1C_2q_1q_2.\]
Since $q_1q_2 \leq N^{\frac{1}{n}+\frac{1}{n-1}} = N^{\frac{2n-1}{n(n-1)}}$, everything apart from the last paragraph of the theorem now follows.

To prove the last paragraph of the theorem, we can assume that there exists $y \in \mathcal{V}_{\eta}$ such that $y$ is contained in an algebraic subgroup of $\mathbb{G}^n_{m,K}$ of codimension at least $2$ and $x$ is contained in the Zariski closure of $y$ in $\mathcal{V}$. By \cite[Th\'eor\`eme 1.2]{Maurin08}, the set $\Sigma$ of all $z \in \mathcal{V}_{\eta}$ that are contained in an algebraic subgroup of $\mathbb{G}^n_{m,K}$ of codimension at least $2$ is finite. The upper bound then follows from applying Theorem \ref{thm:LIB} to the Zariski closure of $z$ in $\mathcal{V}$ for each $z$ in $\Sigma$.
\end{proof}

\subsection{Two open problems that follow from Conjecture \ref{conj:mainconj}}\label{sec:openproblems}

The following problems are particular instances of Conjecture \ref{conj:mainconj} with specific choices of $\mathcal{V}$ and $\mathcal{H}$.

\vspace{0.2cm}

\textbf{Problem 1:} Let $\mathcal{V} \subseteq \mathbb{G}^3_{m,\mathbb{Z}}$ be defined by $Y - (X+1) = Z - (X-1) = 0$ so that $\dim \mathcal{V} = 2$. In this case, the generic fiber $\mathcal{V}$ is not contained in a proper algebraic subgroup of $\mathbb{G}^3_{m,\mathbb{Q}}$.

Set $\mathcal{H} = \mathcal{H}_N = \ker [N]$ for varying $N \in \mathbb{N}$. Then $\dim(\mathcal{V} \cap \mathcal{H}) = 0$. Lemma \ref{lem:complexitymultbyN} and Remark \ref{rmk:lhszerodim} show that proving Conjecture \ref{conj:mainconj} for $\mathcal{V}$ and $\mathcal{H}_N$ is equivalent to proving the following statement: for any $\varepsilon > 0$, there exists a constant $c = c(\varepsilon)$ such that for all $N \in \mathbb{N}$, the cardinality of the ring
\[ A_N = \mathbb{Z}[T]/(T^N-1,(T+1)^N-1,(T-1)^N-1)\]
satisfies $\log |A_N| \leq \varepsilon N^2 + c$.

\vspace{0.2cm}

\textbf{Remarks on Problem 1:} The sequence $(|A_N|)_{N \in \mathbb{N}}$ is a divisibility sequence as, if $N\mid M$, there is a natural surjection $A_M \twoheadrightarrow A_N$. During a hiking trip in the vicinity of Banff, Koymans asked the following question: can the ring $A_N$ be trivial for infinitely many choices of $N$? The answer to this question is ``no". The reason is the following: set $\rho = \frac{1+\sqrt{5}}{2}$. We have that $\rho+1 = \rho^2$ and $\rho-1 = \rho^{-1}$. Therefore, we have a surjective homomorphism
\[ A_N \twoheadrightarrow \mathbb{Z}[\rho]/(\rho^N-1,(\rho-1)^N-1,(\rho+1)^N-1) = \mathbb{Z}[\rho]/(\rho^N-1)\]
given by $T\mapsto \rho$. If this last ring is trivial, we deduce that $\rho^N - 1 \in \mathbb{Z}[\rho]^{\ast}=\langle -1 , \rho \rangle$ and hence $\rho^N-1 = \pm \rho^m$ for some $m \in \mathbb{Z}$. Since $\rho > 1 $, it follows that $\rho^N-1 = \rho^m$ and $m < N$. If $N > 2$, then we obtain the following chain of inequalities
\[ \rho^N = \rho^m+1 \leq \rho^{N-1} + 1 < \rho^{N-1}+\rho^{N-2} = \rho^N\]
and hence a contradiction. Thus, $A_N$ is non-trivial for all $N > 2$.

Here, $(\rho,\rho+1,\rho-1) \in \mathcal{V}(\overline{\mathbb{Q}})$ is a point whose coordinates satisfy two independent multiplicative relations. There are at most finitely many such points, see \cite{CohenZannier_2000} for a list of all of them (their coordinates are all algebraic integers). One can ask whether, for infinitely many $N$, all points of $\mathcal{V} \cap \mathcal{H}_N$ arise as the reduction of one of these finitely many points in characteristic $0$. We do not know the answer to this question. A positive answer would constitute an analogue of \cite[Conjecture A]{AilonRudnick} and \cite[Conjecture 10]{Silverman_2005}.

Finally, taking $\mathcal{V}$ and $\mathcal{H}_N$ as in Problem 1, one can show that the exponent $\dim \mathcal{V} = 2$ in Conjecture \ref{conj:mainconj} cannot be replaced by $1$, not even with a constant $C$ in place of $\varepsilon$: let $N = p-1$ for some prime $p > 2$, then
\[ (\mathcal{V} \cap \mathcal{H}_N) \times_{\Spec \mathbb{Z}} \Spec \mathbb{F}_p \simeq \Spec \mathbb{F}_p^{p-3}\]
and so the left-hand side in Conjecture \ref{conj:mainconj} is greater than or equal to $(p-3)\log p$ as $p \to \infty$. A similar argument with $\mathcal{V}' \subseteq \mathbb{G}^n_{m,\mathbb{Z}}$ defined by $X_2 - (X_1+1) = X_3 - (X_1-1) = 0$ shows that in general the exponent $\dim \mathcal{V}' = n-1$ cannot be replaced by $\dim \mathcal{V}'-1$.

For computational data related to Problem 1, see Appendix \ref{appendix:data}.

\vspace{0.2cm}

\textbf{Problem 2:} Let $a \in \mathbb{Z} \backslash \{0,\pm 1\}$ and let $\mathcal{V} \subseteq \mathbb{G}^3_{m,\mathbb{Z}}$ be defined by $Z - a = Y - (X+1) = 0$ so that $\dim \mathcal{V} = 2$. Set $\mathcal{H} = \mathcal{H}_{M,N} = \ker(X^M, Y^M, Z^N)$ for varying $M, N \in \mathbb{N}$. Then $\dim(\mathcal{V} \cap \mathcal{H}) = 0$.

Remark \ref{rmk:lhszerodim} shows that proving Conjecture \ref{conj:mainconj} for $\mathcal{V}$ and $\mathcal{H}_{M,N}$ would imply the following statement: for any $\varepsilon > 0$, there exists a constant $c = c(a,\varepsilon)$ such that for all $M, N \in \mathbb{N}$, the cardinality of the ring
\[ R_{M,N} = (\mathbb{Z}/(a^N-1)\mathbb{Z})[T]/(T^M-1,(T+1)^M-1)\]
satisfies $\log |R_{M,N}| \leq \varepsilon \max\{M,N\}^2 + c$. We invite the reader to compare this problem with Proposition \ref{prop:Lfunction}.

\section*{Acknowledgements}

We thank Fabrizio Barroero, Ziyang Gao, Philipp Habegger, Peter Koymans, Lars K\"uhne, David Masser, Pieter Moree, Fabien Pazuki, Nicolas Perrin, Jonathan Pila, Christian Urech, and Umberto Zannier for helpful comments and discussions. We thank Lars K\"uhne for pointing out the connection between LIB and the work of Dimitrov-Habegger, we thank Michael Stoll for pointing out the connection between the resultant of two polynomials in $\mathbb{Z}[T]$ and the cardinality of the ring obtained by quotienting out the ideal generated by them, and we thank Denis Simon and Robert Wilms for assistance with the computational exploration of Problem 1 in Section \ref{sec:openproblems} and, in Wilms's case, for agreeing to co-author an appendix presenting the data he obtained. We thank Iris Simon for correcting our citation of J. G. Fichte and for explanations on the phenomenology of German idealism.

The first-named author was supported by ANR-20-CE40-0003 Jinvariant and by the AAP IMPULSION 2026 of the Université Clermont Auvergne. He is grateful to Max Planck Institute for Mathematics in Bonn for its hospitality and financial support. When this project began, the second-named author was supported by the Early Postdoc.Mobility grant no. P2BSP2\_195703 of the Swiss National Science Foundation. He thanks the Mathematical Institute of the University of Oxford and his host there, Jonathan Pila, for hosting him as a visitor for the duration of this grant. We warmly thank the organizers of the conferences ``Leuca2022" and ``La grandezza dei punti piccoli" for providing a stimulating environment for our collaboration.

This material is based upon work supported by the National Science Foundation under Grant No. DMS--1928930 while the second-named author was in residence at the Mathematical Sciences Research Institute in Berkeley, California, during the Spring 2023 semester. The second-named author thanks the DFG for its support (grant no.\ EXC-2047/1 - 390685813). Both authors are grateful for support through the PHC 2026\_16 Germaine de Sta\"el project DiPARATroL.

\vspace{\baselineskip}
\noindent
\framebox[\textwidth]{
\begin{tabular*}{0.96\textwidth}{@{\extracolsep{\fill} }cp{0.84\textwidth}}
		\raisebox{-0.7\height}{%
			\begin{tikzpicture}[y=0.80pt, x=0.8pt, yscale=-1, inner sep=0pt, outer sep=0pt, 
			scale=0.12]
			\definecolor{c003399}{RGB}{0,51,153}
			\definecolor{cffcc00}{RGB}{255,204,0}
			\begin{scope}[shift={(0,-872.36218)}]
			\path[shift={(0,872.36218)},fill=c003399,nonzero rule] (0.0000,0.0000) rectangle (270.0000,180.0000);
			\foreach \myshift in 
			{(0,812.36218), (0,932.36218), 
				(60.0,872.36218), (-60.0,872.36218), 
				(30.0,820.36218), (-30.0,820.36218),
				(30.0,924.36218), (-30.0,924.36218),
				(-52.0,842.36218), (52.0,842.36218), 
				(52.0,902.36218), (-52.0,902.36218)}
			\path[shift=\myshift,fill=cffcc00,nonzero rule] (135.0000,80.0000) -- (137.2453,86.9096) -- (144.5106,86.9098) -- (138.6330,91.1804) -- (140.8778,98.0902) -- (135.0000,93.8200) -- (129.1222,98.0902) -- (131.3670,91.1804) -- (125.4894,86.9098) -- (132.7547,86.9096) -- cycle;
			\end{scope}
			\end{tikzpicture}%
		}
		&
		Francesco Campagna and Gabriel A. Dill have received funding from the European Research Council (ERC) under the European Union's Horizon 2020 research and innovation programme (grant agreement n$^\circ$ 945714).
	\end{tabular*}
}

\appendix

\section{Computational data for Problem 1 -- joint with Robert Wilms}\label{appendix:data}

For $N \in \mathbb{N}$, let
\[ A_N = \mathbb{Z}[T]/(T^N-1,(T+1)^N-1,(T-1)^N-1)\]
as in Problem 1 and set $a_N = |A_N|$.

If the ring
\[\mathbb{Z}[T]/(T^N-1,(T-1)^N-1+s((T+1)^N-1))\]
is finite for some $s \in \mathbb{Z}$, then $a_N$ divides its size, which, by \cite[Lemma 1]{Frenkel_Zabradi_2018}, is equal to the resultant of $T^N-1$ and $(T-1)^N-1+s((T+1)^N-1)$ (in \cite{Frenkel_Zabradi_2018}, both polynomials are assumed to be monic, but the same proof works if only one of them is monic). Building on this, we developed C++ code to verify that 
\[ \log a_N \leq 4N\log N\]
for all $N \leq 3095$.

This supports the conjecture that $\log a_N = o(N^2)$ and even supports the stronger claim that $\log a_N = \mathcal{O}(N \log N)$, which is best possible as one can see by taking $N = p-1$ (see Section \ref{sec:openproblems} above).

We show here two figures that illustrate the generated data; the first one shows an upper bound for $(\log a_N)/(N \log N)$ (for $2 \leq N \leq 3095$) while the second one shows the exact value of $(\log a_N)/(N \log N)$ (for $562$ values of $N$ between $2$ and $586$). In both figures, the values for prime $N$ appear in red while the values for $N$ divisible by $q-1$ for powers $q = p^k > 3$ of at least $10$ distinct primes $p$ appear in yellow.

\begin{figure}[h]
\includegraphics[width=0.9\textwidth]{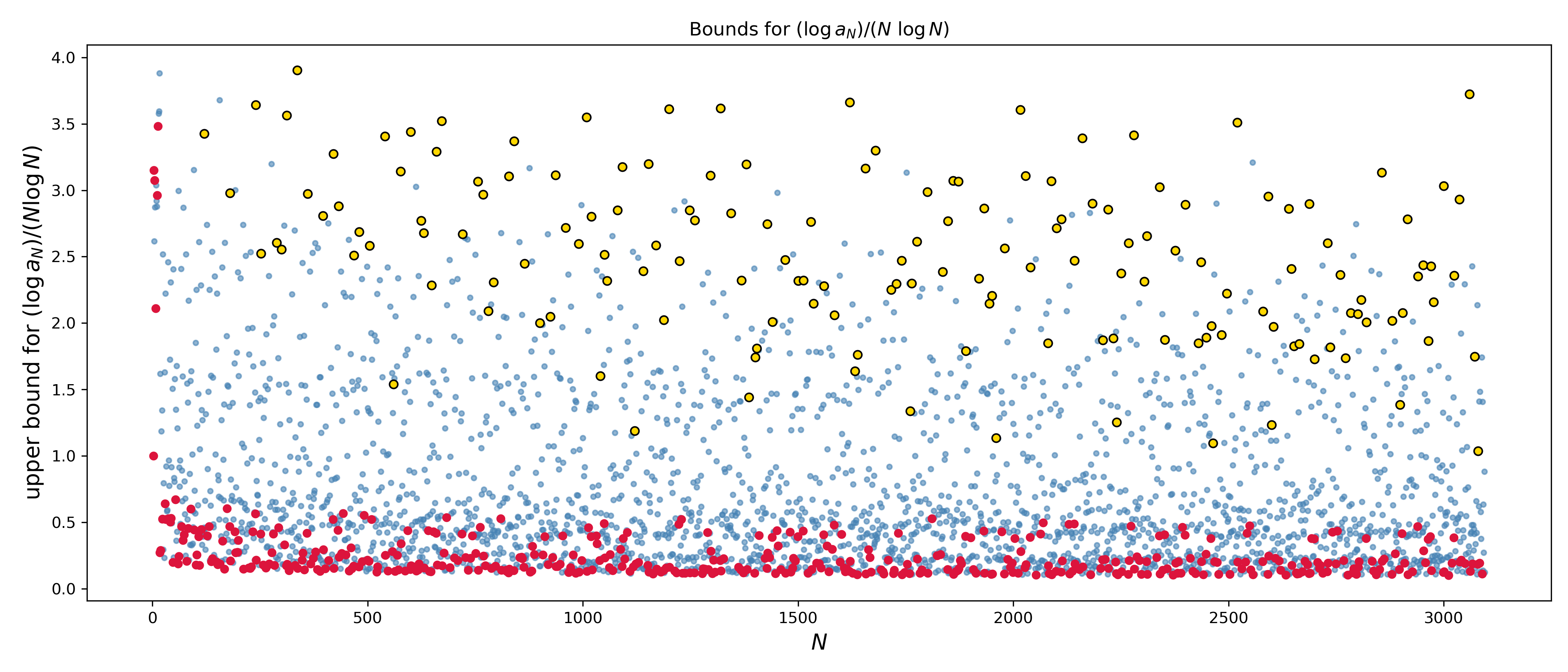}
\end{figure}

\begin{figure}[h]
\includegraphics[width=0.9\textwidth]{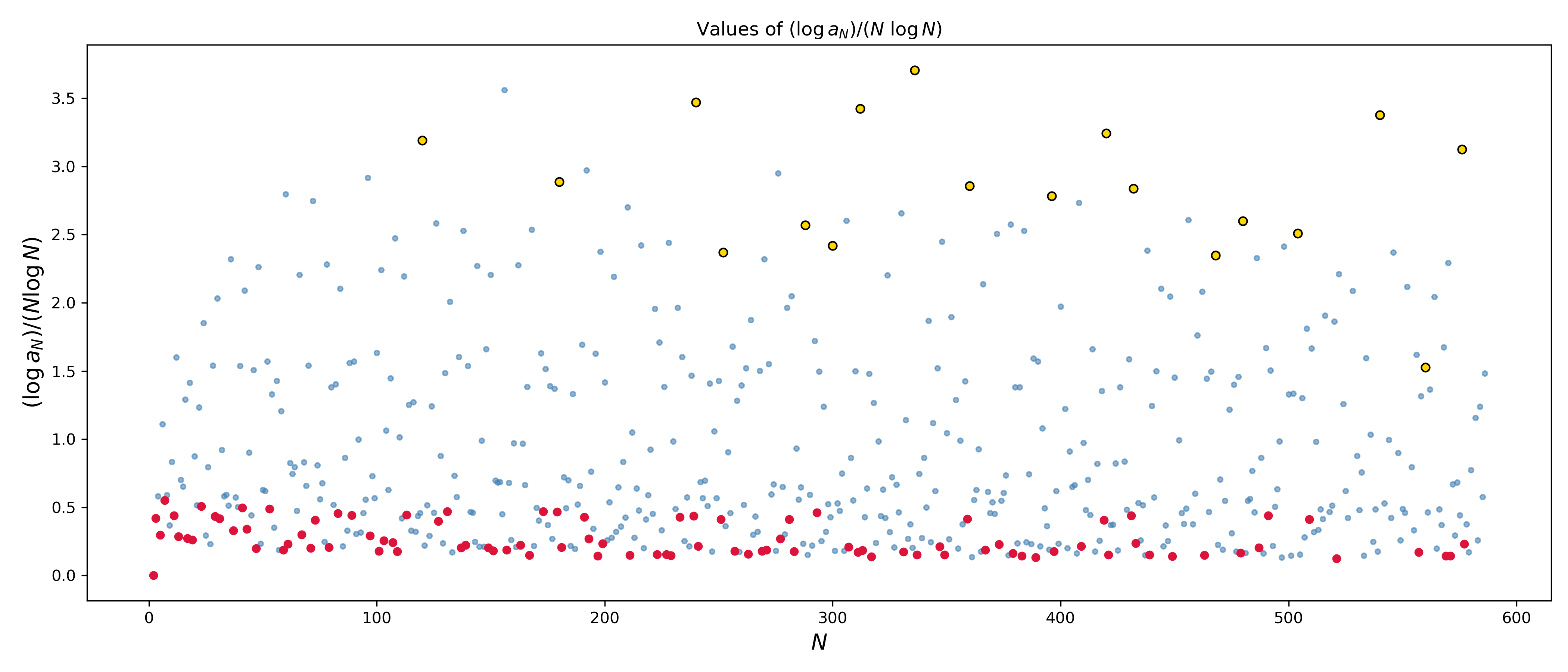}
\end{figure}

The code that generates the upper bound for $a_N$ computes several multiples of $a_N$, corresponding to different randomly chosen integers $s$, and then determines their greatest common divisor. Further values of $s$ are included until the logarithm of the resulting greatest common divisor is at most $4N\log N$. In most cases, two values of $s$ were sufficient. Occasionally, a third one was required, whereas for small $N$, a single value of $s$ sometimes already sufficed. This also explains the high red points near the left-hand edge of the figure showing the upper bounds.

One might expect from a divisibility sequence that the values $a_p$ for $p$ prime tend to be among the smallest ones. Moreover, one would expect $a_N$ to be large if $N$ is divisible by $q-1$ for powers $q = p^k > 3$ of many primes $p$. This is supported by the numerical data, even if one considers $(\log a_N)/(N\log N)$ instead of $a_N$ itself. For instance, for $N = 336$, which is divisible by $q-1$ for $q = 8$, $9$, $17$, $25$, $29$, $43$, $49$, $113$, $169$, $337$, and $N = 3060$, which is divisible by $q-1$ for $q = 5$, $7$, $11$, $13$, $19$, $31$, $37$, $61$, $103$, $181$, $256$, $307$, $613$, $1021$, $1531$, $3061$, the value of and the upper bound for $(\log a_N)/(N \log N)$ respectively are among the largest ones computed.

\printbibliography

\end{document}